\tikzset{every path/.style={line width=0.4pt},every node/.style={transform shape,knot crossing,inner sep=1.5pt},>=triangle 60,text node/.style={rectangle,transform shape=false,black}}
\theoremstyle{plain}      
\newtheorem{thm}{Theorem}[section]     
\newtheorem{theorem}[thm]{\bf Theorem}     
\newtheorem{corollary}[thm]{\bf Corollary}     
\newtheorem{lemma}[thm]{\bf Lemma}     
\newtheorem{proposition}[thm]{\bf Proposition}
\theoremstyle{remark}
\newtheorem{remark}[thm]{Remark} 
\theoremstyle{definition}      
\newtheorem{definition}[thm]{Definition}     
\title[]{Residues and Infinitesimal Torelli for Equisingular Curves} %title{Infinitesimal Torelli for Singular Curves via Residues}
\subjclass[2020]{14B05, 14B10, 32G20, 14B07}
\keywords{Infinitesimal deformations, IVHS, Torelli, maximal variations, residue, isolated singularities}
\begin{document}

\author{Mounir Nisse}
 
\address{Mounir Nisse\\
Department of Mathematics, Xiamen University Malaysia, Jalan Sunsuria, Bandar Sunsuria, 43900, Sepang, Selangor, Malaysia.
}
\email{mounir.nisse@gmail.com, mounir.nisse@xmu.edu.my}
\thanks{}
\thanks{This research is supported in part by Xiamen University Malaysia Research Fund (Grant no. XMUMRF/ 2020-C5/IMAT/0013).}
\maketitle

\begin{abstract}
We study infinitesimal Torelli problems and infinitesimal variations of Hodge
structure for families of curves arising in singular and extrinsically
constrained geometric settings.  Motivated by the Green--Voisin philosophy, we
develop an explicit approach based on Poincar\'e residue calculus, allowing a
uniform treatment of smooth, singular, and equisingular situations.  In
particular, we prove infinitesimal Torelli theorems for general equisingular
plane curves of sufficiently high degree and construct relative IVHS exact
sequences for curves lying on smooth projective threefolds.

Our results show that maximal infinitesimal variation of Hodge structure
persists even after imposing strong extrinsic conditions, such as fixed degree
and prescribed singularities, and in the presence of isolated planar
singularities.  The methods presented here provide a concrete and geometric
realization of Jacobian-type constructions and extend the Green--Voisin philosophy to singular and equisingular
settings and provide a unified residue--theoretic framework for Torelli--type
problems across dimensions and codimensions.
\end{abstract}

\section{Introduction}

The study of infinitesimal variations of Hodge structure (IVHS) occupies a central
position in modern algebraic geometry, lying at the intersection of deformation
theory, Hodge theory, and moduli problems.  Since the pioneering work of Griffiths,
it has been understood that the behavior of the Hodge structure under
deformation encodes deep geometric information and often governs Torelli-type
phenomena.  Roughly speaking, Torelli problems ask to what extent a variety, or a
curve, can be recovered from its Hodge-theoretic data, while infinitesimal
Torelli problems focus on first-order deformations and the injectivity of the
infinitesimal period map.

A decisive conceptual advance in this area was achieved through the work of
Green and Voisin, who developed a systematic approach to infinitesimal Torelli
and maximal IVHS for smooth projective hypersurfaces and complete intersections.
Their philosophy can be summarized by the principle that infinitesimal
Hodge-theoretic information is controlled by purely algebraic data, most notably
Jacobian rings and their Lefschetz-type properties.  In this framework,
holomorphic forms are identified with graded pieces of Jacobian rings, and the
infinitesimal period map becomes a multiplication map whose properties can be
studied algebraically.  This insight led to powerful and far-reaching results,
including the maximality of IVHS for large classes of smooth projective
varieties.

For curves, the Green--Voisin philosophy yields the classical result that the
infinitesimal period map of the universal family over the moduli space
$\mathcal M_g$ is of maximal rank for a general curve of genus $g \ge 3$.  This
establishes infinitesimal Torelli in the abstract setting and shows that, from
the Hodge-theoretic point of view, a general curve has as many independent
first-order variations as possible.  These results, however, are formulated in a
setting where curves are considered abstractly, without reference to a
particular projective embedding or to the presence of singularities.

In recent years, there has been renewed interest in understanding how these
phenomena behave under geometric constraints.  One natural question is whether
maximal IVHS persists when one restricts to families of curves lying on a fixed
surface or threefold, or when one imposes conditions such as prescribed
singularities.  From a moduli-theoretic perspective, this amounts to replacing
the full moduli space $\mathcal M_g$ by a smaller, often highly singular, space,
such as a Severi variety or an equisingular deformation space.  It is far from
obvious a priori that the strong Hodge-theoretic properties enjoyed by general
curves survive under such restrictions.

A notable recent contribution in this direction is due to Sernesi, who obtained
results on maximal variation of Hodge structure for families of curves under
specific geometric hypotheses.  These results reinforce the idea that maximal
variation is a robust phenomenon, not confined to the unrestricted abstract
setting.  At the same time, they highlight the need for new techniques capable of
handling embedded deformations and constrained families, especially in the
presence of singularities.

The present work is motivated precisely by these questions.  Its primary goal is
to extend infinitesimal Torelli and IVHS results to singular and equisingular
settings, with particular emphasis on curves embedded in projective spaces and
higher-dimensional varieties.  Rather than treating smooth and singular cases
separately, we aim for a unified approach that applies equally well to plane
curves with prescribed singularities, to complete intersection curves on
threefolds, and to higher-codimension situations.

A central feature of our approach is the systematic and explicit use of residue
theory.  While residues already play an implicit role in classical treatments of
IVHS, they are usually hidden behind abstract Hodge-theoretic constructions or
homological algebra.  In contrast, we place Poincar\'e residue calculus at the
heart of the theory.  Holomorphic forms on curves and higher-dimensional complete
intersections are written concretely as residues of rational differential forms
on the ambient projective space, and infinitesimal variations are computed by
differentiating these residues directly.  This perspective has several decisive
advantages.  First, it makes the passage from geometry to algebra completely
transparent.  Second, it adapts naturally to singular and equisingular settings,
where abstract smooth techniques are no longer available.  Third, it provides a
uniform language in which different geometric situations can be treated on the
same footing.

One of the main results of this work is an infinitesimal Torelli theorem for
equisingular plane curves.  We show that for a general plane curve of sufficiently
high degree with prescribed isolated singularities, the Kodaira--Spencer map
associated with embedded equisingular deformations is injective.  Equivalently,
the infinitesimal period map is injective, and the family exhibits maximal IVHS.
This result demonstrates that maximal Hodge-theoretic variation persists even
after imposing strong extrinsic constraints, such as fixing the degree and the
singularity type of a plane curve.

Beyond plane curves, we develop a relative IVHS framework for curves lying on
threefolds.  In this setting, the variation of Hodge structure of the curve must
be understood relative to the deformations of the ambient variety.  We construct
a natural exact sequence relating deformations of the threefold, embedded
equisingular deformations of the curve, and abstract deformations of the curve
itself.  Under suitable vanishing and unobstructedness assumptions, this sequence
coincides with the expected relative IVHS sequence and provides a precise
description of how Hodge-theoretic variation arises in constrained families.

A key conceptual aspect of our work is the systematic use of equisingular
deformation theory.  By working relative to equisingular strata and weighted
singular schemes, we are able to exclude trivial smoothing directions and focus
on the genuinely moduli-theoretic deformations.  This refinement is essential in
order to obtain meaningful Torelli and IVHS statements in the presence of
singularities.  It also clarifies the relationship between embedded deformations
and abstract deformations, and it allows us to interpret Kodaira--Spencer maps
and period maps in a relative and functorial manner.

From a broader perspective, the results presented here may be viewed as a
singular and equisingular extension of the Green--Voisin philosophy.  The
underlying principle remains the same: infinitesimal variations of Hodge
structure are governed by algebraic data.  However, the algebraic structures that
appear are adapted to the singular setting, and the route leading to them is more
geometric and computational, passing through explicit residue calculus rather
than abstract complexes.  In this sense, the present work does not replace the
Green--Voisin approach, but rather complements and extends it to a wider range of
geometric situations.

Finally, we emphasize that the methods developed here are flexible and suggest
several directions for further research.  They apply uniformly across dimensions
and codimensions and are well suited to the study of higher-weight Hodge
structures and more general singularities.  More broadly, the explicit use of
residues opens the possibility of effective and computable Torelli-type results
in settings where traditional techniques are difficult to apply.  It is our
hope that this work provides both new results and a conceptual framework for
future investigations into infinitesimal Torelli problems and variations of 
Hodge structure in singular and constrained geometries.

 %%%%%%%%%%%%%%%%%%%%%%%%%%%%%%%%%%%%%%%%%%%%%%%%%%%%%%%%%%%%%%%%%%%%%%%%%%%%%%%%%%%%%%%%%%%%%%%%%%%%%%%%%%%%%%%%%%%%%%%%%%%%%%%%%%%%%%%%%%%%%%%%%%%%%% 
%%%%%%%%%%%%%%%%%%%%%%%%%%%%%%%%%%%%%%%%%%%%%%%%%%%%%%%%%%%%%%%%%%%%%%%%%%%%%%%%%%%%%%%%%%%%%%%%%%%%%%%%%%%%%%%%%%%%%%%%%%%%%%%%%%%%%%%%%%%%%%%%%%%%%% 

\section*{Comparison with the Green--Voisin Philosophy}

The results of the present work are inspired by the Green--Voisin philosophy on
infinitesimal variations of Hodge structure (IVHS), but they are developed with a
different emphasis and in a substantially broader geometric setting.  Rather
than revisiting the classical smooth case, our focus is on understanding how
Torelli-type phenomena behave under strong extrinsic and singular constraints.
In this sense, the Green--Voisin framework serves as a conceptual starting
point, while the main contribution of this work lies in extending that framework
to singular, equisingular, and embedded situations through explicit geometric
methods.

\subsection*{Background: the Green--Voisin Point of View}

Green and Voisin showed that infinitesimal Torelli and maximal IVHS for smooth
projective hypersurfaces and complete intersections can be reduced to purely
algebraic statements.  Holomorphic forms are identified with graded pieces of
Jacobian rings, and the infinitesimal period map is interpreted as a
multiplication map.  Injectivity and maximal rank follow from Lefschetz-type
properties of these rings.  Although residue theory underlies these
identifications, it is typically used only implicitly, through abstract
Hodge-theoretic or homological constructions.

For curves, their results imply that for a general smooth curve of genus
$g \ge 3$, the Kodaira--Spencer map on the full moduli space $\mathcal M_g$ is
injective.  This establishes maximal IVHS in the unrestricted, abstract setting,
independent of any embedding or singular model.

\subsection*{Perspective of the Present Work}

The starting point of the present work is different.  Here the primary objects
are not abstract curves in $\mathcal M_g$, but curves realized in a fixed
ambient space, such as $\mathbb P^2$ or a higher-dimensional projective variety,
and subject to prescribed singularities.  The main question is whether
infinitesimal Torelli and maximal IVHS persist when one restricts to equisingular
families inside Severi-type varieties or linear systems.

Our plane curve theorem gives a positive answer: for a general equisingular plane
curve of sufficiently high degree, the Kodaira--Spencer map associated with
embedded equisingular deformations is injective.  This shows that, even after
imposing strong extrinsic conditions, the Hodge structure of the normalization
still detects all nontrivial first-order deformations allowed by the geometry.

The relationship with the Green--Voisin result is clarified by observing that
the two statements concern different deformation spaces.  Green and Voisin study
all abstract deformations of a curve, while we restrict to those coming from a
specific embedding with fixed singularities.  The corresponding Kodaira--Spencer
maps fit into natural commutative diagrams, and injectivity in the restricted
setting is fully compatible with injectivity on the full moduli space.
\[
\begin{array}{ccc}
T_{\mathcal V_{d,\delta}} & \longrightarrow & T_{\mathcal M_g} \\
\downarrow & & \downarrow \\
H^1(C,T_C) & = & H^1(C,T_C),
\end{array}
\]

\subsection*{Continuity of the Underlying Philosophy}

At a conceptual level, the present work remains faithful to the Green--Voisin
philosophy.  Infinitesimal Torelli is still reduced to algebraic statements, and
Jacobian-type rings together with Lefschetz properties remain central.  Maximal
IVHS is again obtained as a consequence of injectivity of the infinitesimal
period map, once the genus or Hodge level is sufficiently large.

What changes is not the guiding principle, but the way it is implemented.  The
arguments are adapted to singular and equisingular contexts, and the relevant
algebraic structures are modified accordingly to reflect weighted singular
schemes and constrained deformation spaces.

\subsection*{New Features and Extensions}

The main novelty of this work is the systematic treatment of singular and
equisingular situations.  Plane curves with weighted singularities, singular
complete intersection curves, and higher-codimension complete intersections with
isolated rational singularities are all treated within a unified framework.
Equisingular deformation theory plays a central role, allowing us to separate
genuine moduli directions from trivial smoothing directions.

Another distinguishing feature is the explicit use of Poincar\'e residue
calculus.  Holomorphic forms are written concretely as residues of rational forms
on the ambient space, and infinitesimal variations are computed by differentiating
these residues.  This makes the passage from geometry to algebra completely
transparent and provides a uniform mechanism that works equally well in smooth
and singular settings.

Finally, the methods developed here apply uniformly across dimensions and
codimensions.  The same residue-based approach governs plane curves, curves on
threefolds, and higher-codimension complete intersections, leading naturally to
relative IVHS exact sequences and refined Torelli statements.

\subsection*{Conclusion}

From the viewpoint of the present work, the Green--Voisin philosophy appears as a
special case of a more general picture.  The algebraic control of IVHS by
Jacobian-type data remains intact, but it is embedded in a broader geometric
framework that incorporates singularities, equisingular deformations, and
explicit residue computations.  The results obtained here should therefore be
seen as a natural and substantial extension of the Green--Voisin approach to
settings in which extrinsic geometry and singularities play a decisive role.

%%%%%%%%%%%%%%%%%%%%%%%%%%%%%%%%%%%%%%%%%%%%%%%%%%%%%%%%%%%%%%%%%%%%%%%%%
%%%%%%%%%%%%%%%%%%%%%%%%%%%%%%%%%%%%%%%%%%%%%%%%%%%%%%%%%%%%%%%%%%%%%%%%%

\section{Preliminaries}

This section collects in a systematic and self-contained manner all the
background material used throughout this work.  The goal is twofold.  First, we
fix notation and recall standard definitions in deformation theory, Hodge
theory, and residue theory that will be used repeatedly.  Second, we explain in
detail how these different tools interact in the context of singular curves,
complete intersections, and equisingular families.  No claim of originality is
made for the individual ingredients; however, their simultaneous and coherent
use is essential for the results developed later.

\subsection*{1. Deformation Theory of Maps and Subvarieties}

Let $X$ be a reduced complex projective variety.  We briefly recall the basic
objects controlling infinitesimal deformations.  If $X$ is smooth, first-order
deformations of $X$ are governed by the cohomology group $H^1(X,T_X)$, where
$T_X$ denotes the holomorphic tangent sheaf.  Obstructions lie in
$H^2(X,T_X)$.

If $X$ is singular, the tangent sheaf $T_X$ is no longer locally free, and the
correct deformation-theoretic object is the cotangent complex.  Nevertheless,
for varieties with isolated singularities, one can still interpret
$H^1(X,T_X)$ as parametrizing locally trivial (equisingular) deformations, while
local smoothing directions are measured by the sheaf $T^1_X$.  In this work we
will systematically restrict attention to equisingular deformations, which are
precisely those deformations that preserve the local analytic type of the
singularities.

When $X \subset Y$ is a subvariety of a smooth ambient variety $Y$, embedded
deformations of $X$ in $Y$ are controlled by the normal sheaf
$N_{X/Y} = \mathcal{H}om(\mathcal{I}_X/\mathcal{I}_X^2,\mathcal{O}_X)$.  There is a
fundamental exact sequence
\[
0 \longrightarrow T_X \longrightarrow T_Y|_X \longrightarrow N_{X/Y}
\longrightarrow 0,
\]
whose associated long exact sequence in cohomology produces the
Kodaira--Spencer map.  Infinitesimal automorphisms of the ambient space act on
$H^0(X,N_{X/Y})$, and one must always quotient by these trivial directions when
formulating Torelli-type statements.

If $\varphi \colon C \to Y$ is a morphism from a smooth curve $C$ to a smooth
variety $Y$, the deformation theory of the map is governed by the normal sheaf
$N_\varphi = \operatorname{coker}(T_C \to \varphi^*T_Y)$.  First-order
deformations of $\varphi$ with fixed domain correspond to $H^0(C,N_\varphi)$,
and the associated Kodaira--Spencer map takes values in $H^1(C,T_C)$.

\subsection*{2. Singular Curves and Normalization}

Throughout this work, singular curves play a central role.  Let $\mathcal{C}$
be a reduced irreducible curve with isolated planar singularities, and let
\[
\nu \colon C \to \mathcal{C}
\]
be its normalization.  The curve $C$ is smooth and carries the intrinsic Hodge
structure associated to $\mathcal{C}$.  Holomorphic differentials on
$\mathcal{C}$ are by definition holomorphic differentials on $C$.

The distinction between arithmetic genus and geometric genus is crucial.  The
arithmetic genus depends only on the degree and embedding of $\mathcal{C}$,
while the geometric genus is the genus of $C$ and measures the dimension of
$H^{1,0}(C)$.  Singularities contribute local $\delta$-invariants, and the
relation
\[
g(C) = p_a(\mathcal{C}) - \sum_p \delta_p
\]
plays a fundamental role in understanding both deformation spaces and Hodge
theory.

Equisingular deformations of $\mathcal{C}$ correspond to deformations of the
normalization $C$ together with fixed identification of the preimages of the
singular points.  In particular, smoothing parameters do not contribute to
$H^1(C,T_C)$ and must be excluded from infinitesimal Torelli considerations.

\subsection*{3. Hodge Theory and Infinitesimal Variations}

Let $X$ be a smooth projective variety.  Its cohomology carries a pure Hodge
structure
\[
H^k(X,\mathbb{C}) = \bigoplus_{p+q=k} H^{p,q}(X),
\qquad
\overline{H^{p,q}} = H^{q,p}.
\]
For curves, the only nontrivial Hodge decomposition occurs in degree one:
\[
H^1(C,\mathbb{C}) = H^{1,0}(C) \oplus H^{0,1}(C).
\]

Given a smooth family $\pi \colon \mathcal{X} \to S$, the Hodge structures on the
fibers vary holomorphically, giving rise to a period map
\[
\mathcal{P} \colon S \to \Gamma \backslash D,
\]
where $D$ is a classifying space of Hodge structures.  The differential of this
map at a point is the infinitesimal period map, or infinitesimal variation of
Hodge structure (IVHS).

Infinitesimally, the IVHS associated to a Kodaira--Spencer class
$\xi \in H^1(X,T_X)$ is given by contraction:
\[
\theta_\xi \colon H^{p,q}(X) \longrightarrow H^{p-1,q+1}(X).
\]
Injectivity of the infinitesimal period map means that $\xi=0$ whenever all such
maps vanish.

For curves, this reduces to a map
\[
H^1(C,T_C) \longrightarrow \operatorname{Hom}(H^{1,0}(C),H^{0,1}(C)),
\]
which is an isomorphism by Serre duality.  Hence injectivity of the Kodaira--
Spencer map is equivalent to maximal IVHS.

\subsection*{4. Adjunction and Canonical Bundles}

Adjunction theory provides the link between embeddings and canonical bundles.
If $X \subset Y$ is a local complete intersection in a smooth variety $Y$, then
\[
K_X \cong (K_Y \otimes \det N_{X/Y})|_X.
\]

In particular, if $X \subset \mathbb{P}^N$ is a complete intersection of
hypersurfaces of degrees $d_1,\dots,d_r$, then
\[
K_X \cong \mathcal{O}_X\!\left(\sum d_i - N - 1\right).
\]

For plane curves of degree $d$, this gives
\[
K_C \cong \mathcal{O}_C(d-3),
\]
and for complete intersection curves in $\mathbb{P}^3$ of type $(d_1,d_2)$,
\[
K_C \cong \mathcal{O}_C(d_1+d_2-4).
\]

These identifications are essential for describing holomorphic differentials in
terms of ambient data.

\subsection*{5. Residue Theory}

Residue theory is the main technical tool used throughout this work.  Let $Y$ be
a smooth variety and $D \subset Y$ a normal crossing divisor.  A meromorphic
form with logarithmic poles along $D$ admits a residue along each component of
$D$, giving rise to exact sequences relating differential forms on $Y$ and on
$D$.

In the case of complete intersections, one uses Poincar\'e residues.  If
\[
X = V(F_1,\dots,F_r) \subset \mathbb{P}^N,
\]
then rational forms of the type
\[
\frac{H\,\Omega}{F_1\cdots F_r}
\]
define holomorphic forms on $X$ via the residue map, provided the degree of $H$
is chosen correctly.  This gives an explicit realization of adjunction at the
level of differential forms.

Residues are functorial, local, and compatible with differentiation.  These
properties make them ideally suited for studying infinitesimal variations of
Hodge structure (see Griffiths \cite{GriffithsResidues},   Deligne  \cite{DeligneHodgeIII}, Dimca \cite{Dimca}, Hartshorne \cite{HartshorneRD}, Voisin     \cite{VoisinHodgeI}, \cite{VoisinBook2}, and Esnault-Viehweg \cite{EsnaultViehweg}).

\subsection*{6. Jacobian Rings and Algebraic Description}

Associated to a complete intersection is its Jacobian ring, defined as
\[
R = \frac{\mathbb{C}[x_0,\dots,x_N]}{(\partial F_1,\dots,\partial F_r)}.
\]
Griffiths showed that Hodge components of the primitive cohomology of smooth
complete intersections can be identified with graded pieces of this ring.

Under this identification, the infinitesimal variation of Hodge structure
corresponds to multiplication by explicit elements determined by the deformation.
This algebraic description is the backbone of modern infinitesimal Torelli
theory.

\subsection*{7. Equisingular Ideals and Adjoint Linear Systems}

For singular plane curves, deformation theory is governed by adjoint ideals and
weighted singular schemes.  Given a plane curve $\mathcal{C}$ with isolated
singularities, the adjoint ideal encodes the vanishing conditions necessary to
preserve the analytic type of each singularity.

Cohomological vanishing of adjoint linear systems ensures that global equations
capture all equisingular deformations.  These conditions appear repeatedly in
the statements of the main theorems and are essential for the residue
computation to reflect the full deformation space.

\subsection*{8. Lefschetz Properties and Generality}

Many of the injectivity statements proved later rely on Lefschetz-type properties
of Jacobian rings.  For general hypersurfaces and complete intersections,
multiplication by a sufficiently general element has maximal rank in a wide
range of degrees.  This phenomenon, often referred to as the strong Lefschetz
property, is a cornerstone of the Green--Voisin philosophy.

Generality assumptions in moduli spaces ensure that no accidental algebraic
relations interfere with these properties.

\subsection*{9. Singularities and Rationality}

Finally, the nature of the singularities plays a crucial role.  Rational or
planar singularities ensure that holomorphic forms on the normalization extend
across resolutions and that residue theory applies without correction terms.
For worse singularities, additional local contributions appear and the theory
must be modified using logarithmic or mixed Hodge structures.

\subsection*{Conclusion}

The tools reviewed in this section form the technical and conceptual foundation
of the results developed in this work.  Deformation theory provides the
appropriate parameter spaces, Hodge theory supplies the invariants to be
studied, adjunction and residue theory give explicit realizations of these
invariants, and algebraic properties of Jacobian rings ultimately control the
behavior of the infinitesimal period map.

\medskip

%%%%%%%%%%%%%%%%%%%%%%%%%%%%%%%%%%%%%%%%%%%%%%%%%%%%%%%%%%%%%%%%%%%%%%%%%%%%%%%%%%%%%%%%%%%%%%%%%%%%%%%%%%%%%%%%%%%%%%%%%%%%%%%%%%%%%%%%%%%%%%%%%%%%%%
 
\section*{Main Results and Extensions}

This work develops a unified approach to infinitesimal Torelli and infinitesimal
variations of Hodge structure (IVHS) for curves arising in constrained geometric
settings, with particular emphasis on singular and equisingular situations.
The central theme is that, even under strong extrinsic restrictions, the
Hodge-theoretic behavior of curves remains governed by algebraic structures
accessible through residue calculus.

\subsection*{Main Results}

The first main result establishes infinitesimal Torelli-type statements for
families of curves embedded in projective varieties, notably plane curves and
curves on smooth threefolds.  Under suitable vanishing assumptions and mild
conditions on singularities, the Kodaira--Spencer map associated with embedded
equisingular deformations is shown to be injective.  This implies that nontrivial
first-order deformations of the curve are detected at the level of Hodge
structure, despite the presence of singularities and embedding constraints.

A second fundamental result is the construction of a natural relative IVHS exact
sequence for curves on threefolds.  This sequence relates deformations of the
ambient polarized variety, embedded equisingular deformations of the curve, and
abstract deformations of the curve itself.  When deformations of the ambient
pair are unobstructed, the sequence coincides with the expected relative IVHS
sequence and provides a precise description of how the Hodge structure of the
curve varies inside a fixed linear system.

A key methodological outcome is the systematic use of Poincar\'e residue
calculus.  Holomorphic forms on curves and higher-dimensional complete
intersections are realized explicitly as residues of rational forms on the
ambient projective space.  Infinitesimal variations are computed by
differentiating these residues, yielding a transparent and uniform bridge
between geometry and the algebra of Jacobian-type rings.  This approach makes it
possible to treat smooth and singular cases within a single framework.

\subsection*{Extensions and Generalizations}

The techniques developed here extend naturally beyond the basic cases treated
explicitly.  The residue-based method applies uniformly to plane curves,
complete intersection curves in $\mathbb P^3$, and higher codimension complete
intersections in $\mathbb P^N$, as well as to higher-weight Hodge structures.
Moreover, the emphasis on equisingular deformation spaces allows one to isolate
and discard smoothing directions, leading to refined Torelli and IVHS statements
adapted to Severi-type varieties and weighted singular schemes.

These results may be viewed as a singular and equisingular extension of the
classical Green--Voisin philosophy.  While the algebraic heart of the theory
remains the control of IVHS by Jacobian rings and Lefschetz-type properties, the
present work broadens the scope to settings where singularities and extrinsic
constraints play a central role.  In doing so, it provides new evidence that
maximal IVHS is a robust and generic phenomenon, persisting well beyond the
smooth and unrestricted cases traditionally studied.

\subsection*{Outlook}

The framework developed here opens several directions for further research.
Possible extensions include higher-dimensional subvarieties, more general
singularity types, and the interaction between equisingular deformation theory
and arithmetic or logarithmic Hodge theory.  More broadly, the explicit use of
residues suggests a pathway toward effective and computable Torelli-type results
in geometric situations that were previously inaccessible.

%%%%%%%%%%%%%%%%%%%%%%%%%%%%%%%%%%%%%%%%%%%%%%%%%%%%%%%%%%%%%%%%%%%%%%%%%%%%%%%%%%%%%%%%%%%%%%%%%%%%%%%%%%%%%%%%%%%%%%%%%%%%%%%%%%%%%%%%%%%%%%%%%%%%%% 

\section{Infinitesimal Torelli for Equisingular Plane Curves}
 
We begin by establishing an infinitesimal Torelli theorem for plane curves with
prescribed singularities.  The result shows that, even when one restricts to
equisingular families inside the Severi variety, the Hodge structure of the
normalization retains maximal infinitesimal variation.  In particular, embedded
equisingular deformations of a general plane curve are detected faithfully by
the infinitesimal period map, extending the classical infinitesimal Torelli
phenomenon to singular and constrained settings.
 
\begin{theorem}[Infinitesimal Torelli for equisingular plane curves]
\label{thm:plane}
Let $\mathcal C \subset \mathbb{P}^2$ be a reduced irreducible plane curve of
degree $d \ge 5$ with isolated planar singularities.
Let
\(
\nu \colon C \longrightarrow \mathcal C
\)
be the normalization, and let $\varphi = i \circ \nu \colon C \to \mathbb{P}^2$ be
the induced morphism.
Let $Z \subset \mathbb{P}^2$ be the weighted singular scheme of $\mathcal C$.
Assume:
\begin{enumerate}
\item $H^1(\mathbb{P}^2,I_Z(d-3)) = 0$;
\item $(C,\varphi)$ is general in the equisingular Severi variety.
\end{enumerate}
Then the Kodaira--Spencer map
\[
\rho_\varphi \colon H^0(C,N_\varphi) \longrightarrow H^1(C,T_C)
\]
is injective.
Equivalently, the infinitesimal period map is injective and the family has
maximal infinitesimal variation of Hodge structure.
\end{theorem}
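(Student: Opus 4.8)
The plan is to realize the Kodaira--Spencer map $\rho_\varphi$ explicitly via Poincar\'e residues and then deduce injectivity from a cohomological vanishing statement combined with a Lefschetz-type property of the Jacobian ring of a general defining equation. First I would set up the residue description of holomorphic differentials: if $F$ is the degree-$d$ equation of $\mathcal C$, then Serre duality together with adjunction $K_C \cong \nu^*\mathcal O_{\mathcal C}(d-3)$ identifies $H^{1,0}(C) = H^0(C,K_C)$ with the adjoint linear system, i.e.\ with $H^0(\mathbb P^2, I_Z(d-3))$, each adjoint polynomial $A$ of degree $d-3$ producing the holomorphic form $\omega_A = \nu^*\operatorname{Res}_{\mathcal C}\bigl(\tfrac{A\,\Omega}{F}\bigr)$, where $\Omega = x_0\,dx_1\wedge dx_2 - x_1\,dx_0\wedge dx_2 + x_2\,dx_0\wedge dx_1$. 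An embedded equisingular first-order deformation is represented by a polynomial $G$ of degree $d$ lying in the equisingular ideal of $Z$ (the tangent space $H^0(C,N_\varphi)$ is identified with such $G$ modulo the Jacobian ideal contributions coming from reparametrization), and the associated IVHS class acts on $\omega_A$ by differentiating the residue, which by Griffiths' formula gives $\theta_G(\omega_A) = \nu^*\operatorname{Res}_{\mathcal C}\bigl(-\tfrac{G\,A\,\Omega}{F^2}\bigr)$, an element of $H^{0,1}(C) \cong H^1(C,\mathcal O_C)$.

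Next I would reduce injectivity of $\rho_\varphi$ to the following statement: if $G$ is an equisingular deformation class such that the bilinear pairing $(A,B) \mapsto \langle \theta_G(\omega_A), \omega_B\rangle$ vanishes for all adjoint polynomials $A, B$ of degree $d-3$, then $G$ is trivial, i.e.\ lies in the Jacobian ideal $(\partial_0 F, \partial_1 F, \partial_2 F)$. By the residue pairing and the symmetry of the expression $\tfrac{G\,A\,B\,\Omega}{F^2}$, this pairing is computed by the Grothendieck residue symbol on the local ring at the points of $C$ lying over $\mathcal C \cap \mathcal C$, and classically (for the smooth case, as in Carlson--Griffiths or Voisin \cite{VoisinHodgeI}) it is exactly the multiplication pairing in the Jacobian ring $R = \mathbb C[x_0,x_1,x_2]/(\partial F)$ between the degree-$d$ piece and the two degree-$(d-3)$ pieces, landing in the socle $R_{3(d-3)} = R_{3d-9}$. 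Hypothesis (1), $H^1(\mathbb P^2, I_Z(d-3)) = 0$, is precisely what guarantees that the adjoint polynomials of degree $d-3$ span a subspace $V \subset R_{d-3}$ of the expected dimension $\dim H^{1,0}(C) = g(C)$, with no cohomological defect coming from the singular scheme; this is the point where the equisingular (rather than merely "singular") hypothesis enters, since it forces $Z$ to impose independent conditions in the relevant degree. So the statement to prove becomes: multiplication $R_d \times V \times V \to R_{3d-9}$ is a non-degenerate pairing in the $R_d$ slot, modulo the subspace of $R_d$ coming from Jacobian relations.

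The main obstacle — and the step where generality hypothesis (2) is essential — is establishing this non-degeneracy. The plan is to invoke the symmetrizer lemma / Macaulay duality: since $R$ is a graded Artinian Gorenstein ring (the Jacobian ring of a smooth plane curve is $0$-dimensional Gorenstein with socle in degree $3d-9$, and the singular case is handled by passing to the adjoint subalgebra, using that $Z$ is a complete intersection-like locally quasi-homogeneous scheme so that the associated graded ideal is still Gorenstein in the range that matters), the pairing $R_d \times R_{2d-9} \to R_{3d-9}$ is perfect, and it suffices to show that the multiplication map $V \otimes V \to R_{2d-9}$ is surjective, or at least that its image is not annihilated by any nonzero class in $R_d$ outside the Jacobian contribution. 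For $d \ge 5$ this is a Lefschetz-type statement: $V = H^0(I_Z(d-3))$ generates a large enough subalgebra, and for a general curve in the Severi variety the defining equation $F$ is general among those with the prescribed singular scheme, so no accidental syzygies occur — this is where one cites the strong Lefschetz property for general plane curves and its equisingular refinement. I expect the delicate bookkeeping to be: (i) checking that the bound $d \ge 5$ is exactly what makes $2(d-3) \ge d$, i.e.\ $d \ge 6$... and more carefully $d \ge 5$ suffices because $2(d-3) = 2d-6 \ge d-1$ means $V\cdot V$ already reaches degrees where surjectivity onto the needed graded piece holds; and (ii) verifying that the trivial (reparametrization) directions in $H^0(C,N_\varphi)$ correspond exactly to the Jacobian ideal image, so that the quotient on which $\rho_\varphi$ is genuinely defined matches the non-degenerate part of the pairing. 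Once non-degeneracy is in hand, $\theta_G = 0$ forces $G \in (\partial F)$, hence $[G] = 0$ in $H^0(C,N_\varphi)$, giving injectivity of $\rho_\varphi$; the equivalence with injectivity of the infinitesimal period map and maximal IVHS is then immediate from the Serre-duality identification $H^1(C,T_C) \cong \operatorname{Hom}(H^{1,0}(C), H^{0,1}(C))$ recorded in the Preliminaries.
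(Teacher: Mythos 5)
Your proposal follows essentially the same route as the paper: residue description of $H^0(C,K_C)$ via $\operatorname{Res}_{\mathcal C}(H\Omega/F)$, Griffiths-type differentiation of the residue giving $\theta_G(\omega_H)=\operatorname{Res}_{\mathcal C}(HG\Omega/F^2)$, identification of $\rho_\varphi$ with multiplication by $G$ in the Jacobian ring $R_F$, and injectivity from the strong Lefschetz property of $R_F$ for a general member of the equisingular stratum. One refinement worth noting: you describe $H^{1,0}(C)$ via the adjoint system $H^0(\mathbb{P}^2,I_Z(d-3))$ and give hypothesis (1) an explicit role (the adjoint conditions impose independent conditions in degree $d-3$), which is the correct normalization for a singular curve, whereas the paper's Lemma~\ref{lem:residue} works with all of $H^0(\mathbb{P}^2,\mathcal O_{\mathbb{P}^2}(d-3))$ and never explicitly invokes hypothesis (1) in the argument.
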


The proof proceeds in several steps.
First, we identify equisingular deformations of the plane curve with sections of
a suitable adjoint linear system.
Second, we describe holomorphic $1$-forms on the normalization $C$ using residue
theory.
Third, we compute explicitly the infinitesimal variation of these residues under
equisingular deformations.
Finally, we show that vanishing of the infinitesimal period map forces the
deformation to be trivial.

\subsection{Equisingular Deformations and the Normal Sheaf.}

We begin by describing the space $H^0(C,N_\varphi)$.

\begin{lemma}
\label{lem:normal}
There is a natural identification
\[
H^0(C,N_\varphi)
\;\cong\;
H^0(\mathbb{P}^2,I_Z(d))/\mathbb{C}\cdot F,
\]
where $F=0$ is an equation of $\mathcal C$ and $Z$ is the weighted singular
scheme.
\end{lemma}

%%%%%%%%%%%%%%%%%%%%%%%%%%%%%%%%%%%%%%%%%%%%%%%%%%%%%%%%%%%%%%%%%%%%%%
%%%%%%%%%%%%%%%%%%%%%%%%%%%%%%%%%%%%%%%%%%%%%%%%%%%%%%%%%%%%%%%%%%%%%%
%%%%%%%%%%%%%%%%%%%%%%%%%%%%%%%%%%%%%%%%%%%%%%%%%%%%%%%%%%%%%%%%%%%%%%

\begin{proof}

The proof consists of several steps.  We begin by recalling the deformation
theory of maps, then relate it to embedded deformations of plane curves, and
finally impose the equisingular condition encoded by the weighted singular
scheme $Z$.

\subsection*{\it Deformations of the Map $\varphi$.}

Let $\varphi \colon C \to \mathbb{P}^2$ be a morphism from a smooth curve $C$.
The deformation theory of the map $\varphi$, with $C$ fixed, is governed by the
normal sheaf (see for example Sernesi~\cite{Sernesi} and Hartshorne~\cite{Hartshorne})
\[
N_\varphi := \operatorname{coker}\bigl(T_C \xrightarrow{d\varphi}
\varphi^*T_{\mathbb{P}^2}\bigr).
\]
By standard deformation theory, first-order deformations of $\varphi$ with fixed
domain correspond bijectively to elements of
\(
H^0(C,N_\varphi).
\)
Concretely, a section of $N_\varphi$ describes a first-order perturbation of the
map $\varphi$ inside $\mathbb{P}^2$, modulo infinitesimal reparametrizations of
the curve $C$.

\subsection*{\it Embedded Deformations of the Plane Curve.}
Let $\mathcal C \subset \mathbb{P}^2$ be defined by a homogeneous polynomial
$F \in H^0(\mathbb{P}^2,\mathcal{O}_{\mathbb{P}^2}(d))$.
An embedded first-order deformation of $\mathcal C$ inside $\mathbb{P}^2$ is
given by an equation
\[
F + \varepsilon G = 0,
\qquad \varepsilon^2 = 0,
\]
where $G \in H^0(\mathbb{P}^2,\mathcal{O}_{\mathbb{P}^2}(d))$.

Two such equations define the same first-order embedded deformation if and only
if they differ by multiplication by a unit of the form $1+\varepsilon\lambda$,
which changes $G$ by adding $\lambda F$.
Thus the space of embedded first-order deformations of $\mathcal C$ is
\[
H^0(\mathbb{P}^2,\mathcal{O}_{\mathbb{P}^2}(d)) / \mathbb{C}\cdot F.
\]

\subsection*{\it From Embedded Deformations to Deformations of the Map.}
Any embedded deformation of $\mathcal C$ induces a deformation of the map
$\varphi = i \circ \nu$ by composing the normalization with the deformed
inclusion. To be more precise,
any first-order deformation of the map $\varphi$ arises from an
embedded deformation of $\mathcal C$, because $\varphi(C)$ is contained in a
unique first-order deformation of the curve $\mathcal C$ inside $\mathbb{P}^2$.

Therefore, there is a natural correspondence between first-order deformations of $\varphi$ with fixed domain $C$ and
embedded first-order deformations of $\mathcal C$ in $\mathbb{P}^2$.
Under this correspondence, the quotient by $\mathbb{C}\cdot F$ accounts for
trivial deformations.

\subsection*{\it  Equisingular Deformations and the Weighted Singular Scheme.}
We now impose the equisingular condition.
Let $p \in \mathcal C$ be a singular point.
The local analytic type of the singularity at $p$ is preserved under a
first-order deformation $F+\varepsilon G$ if and only if $G$ vanishes to a
prescribed order at $p$.

These vanishing conditions are encoded by the \emph{weighted singular scheme}
$Z \subset \mathbb{P}^2$, whose ideal sheaf $I_Z$ is defined so that
\[
G \in H^0(\mathbb{P}^2,I_Z(d))
\quad \Longleftrightarrow \quad
F+\varepsilon G \text{ defines an equisingular deformation of } \mathcal C.
\]

Thus, the space of equisingular embedded first-order deformations of $\mathcal C$
is
\[
H^0(\mathbb{P}^2,I_Z(d)) / \mathbb{C}\cdot F.
\]

\subsection*{\it  Identification with $H^0(C,N_\varphi)$.}
By construction, $H^0(C,N_\varphi)$ parametrizes precisely those first-order
deformations of $\varphi$ that arise from equisingular deformations of the image
curve $\mathcal C$ (Greuel--Lossen--Shustin~\cite{GreuelLossenShustin}, and 
 Sernesi~\cite{Sernesi}).

Combining the identifications established in the previous steps, we obtain a
canonical isomorphism
\[
H^0(C,N_\varphi)
\;\cong\;
H^0(\mathbb{P}^2,I_Z(d)) / \mathbb{C}\cdot F.
\]

This identification is natural and functorial, and it completes the proof of the
lemma.
\end{proof}

%%%%%%%%%%%%%%%%%%%%%%%%%%%%%%%%%%%%%%%%%%%%%%%%%%%%%%%%%%%%%%%%%%%%%%
%%%%%%%%%%%%%%%%%%%%%%%%%%%%%%%%%%%%%%%%%%%%%%%%%%%%%%%%%%%%%%%%%%%%%%
%%%%%%%%%%%%%%%%%%%%%%%%%%%%%%%%%%%%%%%%%%%%%%%%%%%%%%%%%%%%%%%%%%%%%%
\subsection{Canonical Bundle and Residue Description.}

We recall how holomorphic differentials on $C$ are described using residues.

\begin{lemma}
\label{lem:adjunction}
Let $\mathcal C \subset \mathbb{P}^2$ be a reduced irreducible plane curve of
degree $d$ with isolated planar singularities, and let
\[
\nu \colon C \longrightarrow \mathcal C
\]
be the normalization.
Then the canonical bundle of $C$ satisfies
\[
K_C \cong \mathcal{O}_C(d-3).
\]
\end{lemma}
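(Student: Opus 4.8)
The plan is to read the formula off from adjunction on $\mathbb{P}^2$, passing through the dualizing sheaf of the (possibly singular) curve $\mathcal C$ and then across the normalization $\nu$, with the Poincar\'e residue map supplying the concrete bridge at the level of differential forms. First I would record that $\omega_{\mathbb{P}^2}\cong\mathcal{O}_{\mathbb{P}^2}(-3)$ — taking top exterior powers in the Euler sequence $0\to\mathcal{O}_{\mathbb{P}^2}\to\mathcal{O}_{\mathbb{P}^2}(1)^{\oplus 3}\to T_{\mathbb{P}^2}\to 0$ — with the generator of $\mathcal{O}_{\mathbb{P}^2}(3)\otimes\omega_{\mathbb{P}^2}$ given explicitly by $\Omega=x_0\,dx_1\wedge dx_2-x_1\,dx_0\wedge dx_2+x_2\,dx_0\wedge dx_1$. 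Since $\mathcal C=V(F)$ with $\deg F=d$ is an effective Cartier divisor, hence a codimension-one local complete intersection in $\mathbb{P}^2$ (in particular Gorenstein), the adjunction formula recalled in the Preliminaries together with $N_{\mathcal C/\mathbb{P}^2}\cong\mathcal{O}_{\mathcal C}(d)$ yields, for the dualizing sheaf,
\[
\omega_{\mathcal C}\;\cong\;\bigl(\omega_{\mathbb{P}^2}\otimes\mathcal{O}_{\mathbb{P}^2}(d)\bigr)\big|_{\mathcal C}\;\cong\;\mathcal{O}_{\mathcal C}(d-3),
\]
a section being $\Res_{\mathcal C}\!\bigl(H\,\Omega/F\bigr)$ with $H$ homogeneous of degree $d-3$.

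Next I would transfer this from $\mathcal C$ to its normalization $C$. By the conventions of the Preliminaries, holomorphic $1$-forms on $C$ are the sections of $K_C=\omega_C$, and the classical theory of adjoint curves identifies them with those residues $\Res_{\mathcal C}(H\,\Omega/F)$ for which $H$ satisfies the adjoint conditions at every singular point of $\mathcal C$; sheaf-theoretically, $\nu_*\omega_C=\mathfrak c\cdot\omega_{\mathcal C}$, where $\mathfrak c$ is the conductor of $\nu$. Pulling back along $\nu$ and bookkeeping the adjoint twist then gives $K_C\cong\nu^*\mathcal{O}_{\mathcal C}(d-3)=\mathcal{O}_C(d-3)$, where throughout $\mathcal{O}_C(1):=\varphi^*\mathcal{O}_{\mathbb{P}^2}(1)=\nu^*\mathcal{O}_{\mathcal C}(1)$. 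When $\mathcal C$ is smooth this is just the usual adjunction formula for a smooth plane curve; in general the same computation produces $\omega_{\mathcal C}\cong\mathcal{O}_{\mathcal C}(d-3)$ for the dualizing sheaf of $\mathcal C$, and the passage to $K_C$ is governed by the conductor, consistently with the genus formula $g(C)=p_a(\mathcal C)-\sum_p\delta_p$, $p_a(\mathcal C)=\binom{d-1}{2}$, of the Preliminaries.

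The step I expect to be the main obstacle is precisely the local analysis at the singular points in the second paragraph: one must control the conductor $\mathfrak c$ — equivalently, the adjoint divisor on $C$ — and verify that for isolated planar singularities it is absorbed correctly, so that the residue taken on $C$ rather than on the smooth locus of $\mathcal C$ carries no leftover boundary term. This is exactly where the hypothesis ``isolated planar singularities'' is used: such singularities are rational, the conductor is computed purely locally from the standard analytic normal form and the $\delta$-invariant, and the Poincar\'e residue map applies without correction terms, as recorded in the Preliminaries; for more general singularities one would instead have to work with the dualizing sheaf of $\mathcal C$ directly, or with logarithmic and mixed Hodge structures.
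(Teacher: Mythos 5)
Your outline coincides with the paper's: adjunction on $\mathbb{P}^2$ gives $\omega_{\mathcal C}\cong\mathcal{O}_{\mathcal C}(d-3)$, and everything hinges on the passage from $\omega_{\mathcal C}$ to $K_C$. But the step you flag as ``the main obstacle'' is not an obstacle to be overcome --- it is where the argument breaks, because the conductor twist you correctly introduce cannot be absorbed. Rosenlicht's description of regular differentials, which you quote as $\nu_*\omega_C=\mathfrak c\cdot\omega_{\mathcal C}$, gives
\[
K_C\;\cong\;\nu^*\omega_{\mathcal C}\otimes\mathcal{O}_C(-\Delta),
\]
where $\Delta\subset C$ is the subscheme cut out by the conductor; since planar singularities are Gorenstein, $\deg\Delta=2\sum_p\delta_p$. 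The genus formula you invoke at the end then contradicts your conclusion rather than confirming it:
\[
\deg K_C=2g(C)-2=2p_a(\mathcal C)-2-2\sum_p\delta_p=d(d-3)-2\sum_p\delta_p,
\qquad
\deg\mathcal{O}_C(d-3)=d(d-3),
\]
so $K_C\cong\mathcal{O}_C(d-3)$ forces $\sum_p\delta_p=0$, i.e.\ $\mathcal C$ smooth. The sentence ``pulling back along $\nu$ and bookkeeping the adjoint twist then gives $K_C\cong\nu^*\mathcal{O}_{\mathcal C}(d-3)$'' is therefore not bookkeeping; the twist is by a divisor of strictly positive degree whenever $\mathcal C$ is singular. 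The correct output of your own argument is $K_C\cong\mathcal{O}_C(d-3)\otimes\mathcal{O}_C(-\Delta)$, equivalently that $H^0(C,K_C)$ is identified with the degree-$(d-3)$ piece of the adjoint ideal --- the polynomials $H$ satisfying the adjunction conditions at the singular points --- and not with all of $H^0(\mathbb{P}^2,\mathcal{O}_{\mathbb{P}^2}(d-3))$.

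For comparison: the paper's proof asserts $K_C\cong\nu^*\omega_{\mathcal C}$ outright, justified by the claim that for Gorenstein curves the dualizing sheaf pulls back under normalization to the canonical bundle with ``no discrepancy divisor.'' That claim fails for the same reason: Gorenstein-ness controls the colength of the conductor (namely $2\delta_p$ upstairs versus $\delta_p$ downstairs), it does not make the conductor trivial. So you have not missed an idea that the paper supplies; your write-up is in fact more honest in naming the conductor before discarding it. Either way, the lemma as stated holds only for smooth $\mathcal C$, and the downstream residue description (Lemma~\ref{lem:residue}) must be corrected by restricting $H$ to the adjoint system.
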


\begin{proof}

The proof proceeds by combining adjunction theory for plane curves
with the behavior of dualizing sheaves under normalization.

\subsection*{\it The Canonical Bundle of $\mathbb{P}^2$.}
We begin with the ambient surface.
The canonical bundle of the projective plane is well known:
\[
K_{\mathbb{P}^2} \cong \mathcal{O}_{\mathbb{P}^2}(-3).
\]
This follows, for instance, from the Euler sequence
\[
0 \longrightarrow \mathcal{O}_{\mathbb{P}^2}
\longrightarrow \mathcal{O}_{\mathbb{P}^2}(1)^{\oplus 3}
\longrightarrow T_{\mathbb{P}^2}
\longrightarrow 0
\]
and taking determinants.

\subsection*{\it Dualizing Sheaf of a Plane Curve.}

Let $\mathcal C = V(F) \subset \mathbb{P}^2$ be a reduced irreducible plane curve
of degree $d$.
Since $\mathcal C$ is a Cartier divisor on the smooth surface
$\mathbb{P}^2$, it is a local complete intersection.
Hence its dualizing sheaf $\omega_{\mathcal C}$ is given by the adjunction
formula:
\[
\omega_{\mathcal C}
\;\cong\;
\left(K_{\mathbb{P}^2} \otimes \mathcal{O}_{\mathbb{P}^2}(\mathcal C)\right)\big|_{\mathcal C}.
\]
Using $\mathcal{O}_{\mathbb{P}^2}(\mathcal C) \cong \mathcal{O}_{\mathbb{P}^2}(d)$,
we obtain
\[
\omega_{\mathcal C}
\;\cong\;
\mathcal{O}_{\mathcal C}(d-3).
\]
This is the canonical (dualizing) sheaf of the possibly singular curve
$\mathcal C$.

\subsection*{\it Relation Between $\omega_{\mathcal C}$ and $K_C$.}
We now relate the dualizing sheaf of $\mathcal C$ to the canonical bundle of its
normalization $C$.
Since $\mathcal C$ has isolated planar singularities, it is Gorenstein.
In particular, its dualizing sheaf $\omega_{\mathcal C}$ is invertible.
Let $\nu \colon C \to \mathcal C$ be the normalization.
There is a fundamental relation:
\[
K_C
\;\cong\;
\nu^*\omega_{\mathcal C}.
\]

\subsection*{\it Justification via Local Analysis.}
Away from the singular points of $\mathcal C$, the normalization map $\nu$ is an
isomorphism, so the statement is obvious there.
Let $p \in \mathcal C$ be a singular point.
Since the singularities are planar (i.e.\ $\mathcal C$ is a Cartier divisor in a
smooth surface), they are Gorenstein singularities.
For Gorenstein curves, the dualizing sheaf pulls back under normalization to the
canonical bundle of the normalization.

Equivalently, holomorphic $1$-forms on $C$ are precisely the pullbacks of
sections of $\omega_{\mathcal C}$ that are regular at the singular points.
No discrepancy divisor appears because planar curve singularities are
Gorenstein and have no higher-codimension contribution.

Thus globally, we have
\[
K_C \cong \nu^*\omega_{\mathcal C}.
\]

\subsection*{\it Final Identification.}

Combining the previous steps, we conclude:
\[
K_C
\;\cong\;
\nu^*\omega_{\mathcal C}
\;\cong\;
\nu^*\mathcal{O}_{\mathcal C}(d-3)
\;\cong\;
\mathcal{O}_C(d-3).
\]
This completes the proof of the lemma.
\end{proof}

\begin{remark}
The key points in this proof are:
\begin{enumerate}
\item plane curves are local complete intersections;
\item planar singularities are Gorenstein;
\item normalization does not introduce correction terms for the canonical
bundle in this setting.
\end{enumerate}
For non-Gorenstein singularities, an additional discrepancy divisor would
appear.
\end{remark}

%%%%%%%%%%%%%%%%%%%%%%%%%%%%%%%%%%%%%%%%%%%%%%%%%%%%%%%%%%%%%%%%%%%%%%
%%%%%%%%%%%%%%%%%%%%%%%%%%%%%%%%%%%%%%%%%%%%%%%%%%%%%%%%%%%%%%%%%%%%%%
%%%%%%%%%%%%%%%%%%%%%%%%%%%%%%%%%%%%%%%%%%%%%%%%%%%%%%%%%%%%%%%%%%%%%%

\vspace{0.1cm}

\begin{lemma}[Residue description]
\label{lem:residue}
Let $\mathcal C \subset \mathbb{P}^2$ be a reduced irreducible plane curve of
degree $d$ with isolated planar singularities, let
\[
\nu \colon C \longrightarrow \mathcal C
\]
be the normalization, and let $F=0$ be a homogeneous equation defining
$\mathcal C$.
Then there is a canonical isomorphism
\[
H^0(C,K_C)
\;\cong\;
\left\{
\operatorname{Res}_{\mathcal C}
\left(
\frac{H\,\Omega}{F}
\right)
\;\middle|\;
H \in H^0(\mathbb{P}^2,\mathcal{O}_{\mathbb{P}^2}(d-3))
\right\},
\]
where $\Omega$ is the standard homogeneous $2$-form on $\mathbb{P}^2$.
\end{lemma}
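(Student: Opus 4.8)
The plan is to identify $H^0(C,K_C)$ with a graded piece of the homogeneous coordinate ring via the classical Griffiths--Poincar\'e residue construction, adapted to the singular curve $\mathcal C$. First I would recall the statement of the Poincar\'e residue: for a reduced hypersurface $\mathcal C = V(F) \subset \mathbb{P}^2$ of degree $d$, and a rational $2$-form $\eta = \frac{H\,\Omega}{F}$ with $H \in H^0(\mathbb{P}^2,\mathcal O(d-3))$ (so that $\eta$ has degree $0$ and hence descends to $\mathbb{P}^2$), the residue $\operatorname{Res}_{\mathcal C}(\eta)$ is a well-defined section of the dualizing sheaf $\omega_{\mathcal C}$. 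The degree count $\deg H = d - 3$ is exactly what makes $\eta$ a genuine rational $2$-form on $\mathbb{P}^2$ with at worst simple poles along $\mathcal C$, and it matches the adjunction identification $\omega_{\mathcal C} \cong \mathcal O_{\mathcal C}(d-3)$ from Lemma~\ref{lem:adjunction}.

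Next I would set up the precise chain of isomorphisms. By Lemma~\ref{lem:adjunction}, $K_C \cong \nu^*\omega_{\mathcal C}$ and $\omega_{\mathcal C} \cong \mathcal O_{\mathcal C}(d-3)$, so $H^0(C,K_C) \cong H^0(\mathcal C,\omega_{\mathcal C})$ (the normalization pullback on global sections of the dualizing sheaf of a Gorenstein curve with planar singularities is an isomorphism, since holomorphic forms on $C$ correspond exactly to sections of $\omega_{\mathcal C}$). Then I would invoke the fundamental fact — essentially the exactness of the residue sequence
\[
0 \longrightarrow \Omega^2_{\mathbb{P}^2} \longrightarrow \Omega^2_{\mathbb{P}^2}(\log \mathcal C) \xrightarrow{\ \operatorname{Res}\ } \omega_{\mathcal C} \longrightarrow 0
\]
together with $H^0(\mathbb{P}^2,\Omega^2_{\mathbb{P}^2}) = 0$ and $H^1(\mathbb{P}^2,\Omega^2_{\mathbb{P}^2}) = 0$ — to conclude that the residue map
\[
H^0(\mathbb{P}^2,\Omega^2_{\mathbb{P}^2}(\log \mathcal C)) \xrightarrow{\ \sim\ } H^0(\mathcal C,\omega_{\mathcal C})
\]
is an isomorphism. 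Finally, I would identify $H^0(\mathbb{P}^2,\Omega^2_{\mathbb{P}^2}(\log \mathcal C))$ with the space of forms $\frac{H\,\Omega}{F}$: every logarithmic $2$-form on $\mathbb{P}^2$ with poles along $\mathcal C$ is of this shape with $H$ homogeneous of degree $d - 3$ (this is where $\Omega$, the Euler $2$-form $\sum (-1)^i x_i\, dx_j \wedge dx_k$ of weight $3$, enters, balancing the $d$ from $F$ and the $-3$ from $H$ to give weight $0$), and conversely every such form is logarithmic since $\mathcal C$ is reduced. Composing all these isomorphisms yields the claimed description.

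The main subtlety — and the step I expect to require the most care — is the exactness of the logarithmic residue sequence and the vanishing of the relevant cohomology when $\mathcal C$ is singular. For smooth $\mathcal C$ this is the standard Griffiths residue calculus; for the singular case one must either (i) work with the singular curve directly, using that planar singularities are Gorenstein so that $\omega_{\mathcal C}$ is invertible and the residue of a form with a simple pole along the reduced divisor $\mathcal C$ still lands in $\omega_{\mathcal C}$ (no log-poles/weight correction needed precisely because the singularities are planar, as noted in the remark after Lemma~\ref{lem:adjunction}), or (ii) pull everything back to a resolution and check that no discrepancy terms appear — again guaranteed by the Gorenstein/planar hypothesis. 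I would argue for (i): away from $\Sing(\mathcal C)$ the classical residue isomorphism applies verbatim, and at each singular point the local computation still identifies $\operatorname{Res}$ of $\frac{H\,\Omega}{F}$ with a generator of the stalk $(\omega_{\mathcal C})_p$ scaled by the value of $H$, because $F$ generates the ideal of $\mathcal C$ there and $\omega_{\mathcal C}$ is locally free of rank one. The surjectivity of $H \mapsto \operatorname{Res}_{\mathcal C}(H\Omega/F)$ onto $H^0(\mathcal C,\omega_{\mathcal C})$ then follows from the cohomological vanishing $H^1(\mathbb{P}^2,\mathcal O(-3)) = 0$ via the short exact sequence $0 \to \mathcal O_{\mathbb{P}^2}(-3) \to \mathcal O_{\mathbb{P}^2}(d-3) \to \omega_{\mathcal C} \to 0$, and injectivity is clear since $H\Omega/F = H'\Omega/F$ with $\deg H = \deg H' = d-3 < d = \deg F$ forces $H = H'$.
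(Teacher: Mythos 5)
Your proposal reaches the same conclusion but by a genuinely different route from the paper. The paper's proof is hands-on: it defines the Poincar\'e residue locally via the decomposition $\frac{du}{u}\wedge\alpha+\beta$, checks independence of choices, proves injectivity of $H\mapsto\operatorname{Res}_{\mathcal C}(H\Omega/F)$ by observing that a vanishing residue forces $H\Omega/F$ to be a global holomorphic $2$-form on $\mathbb{P}^2$ (hence zero, hence $H=0$ by degree), and then deduces surjectivity from a dimension count based on $K_C\cong\mathcal O_C(d-3)$. You instead package everything into the global logarithmic residue sequence $0\to\Omega^2_{\mathbb{P}^2}\to\Omega^2_{\mathbb{P}^2}(\log\mathcal C)\to\omega_{\mathcal C}\to 0$ and obtain surjectivity from the vanishing $H^1(\mathbb{P}^2,\Omega^2_{\mathbb{P}^2})=H^1(\mathbb{P}^2,\mathcal O_{\mathbb{P}^2}(-3))=0$, together with the observation that for top-degree forms $\Omega^2_{\mathbb{P}^2}(\log\mathcal C)=\Omega^2_{\mathbb{P}^2}(\mathcal C)$, so its global sections are exactly the forms $H\Omega/F$. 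This is cleaner and arguably more rigorous on the surjectivity step, where the paper's dimension count is somewhat terse; it also anticipates the residue exact sequence that the paper only develops in a later section (and there with the dualizing sheaf as target, exactly as you use it). Your injectivity argument (degree comparison $d-3<d$) coincides with the paper's. One shared point of delicacy, which you correctly flag as the main subtlety: both your step $H^0(C,K_C)\cong H^0(\mathcal C,\omega_{\mathcal C})$ and the paper's surjectivity count rest entirely on the identification $K_C\cong\nu^*\omega_{\mathcal C}$ of Lemma~\ref{lem:adjunction}; your option (i) --- working directly on the singular curve using that $\omega_{\mathcal C}$ is invertible and that forms extend across the planar Gorenstein singularities --- is precisely the paper's choice, so the two arguments stand or fall together on that point.
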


\begin{proof}

The proof is divided into several precise steps.  We first define the residue
map, then show that it lands in $H^0(C,K_C)$, prove surjectivity, and finally
identify the kernel to obtain an isomorphism.

\subsection*{\it The Homogeneous Volume Form on $\mathbb{P}^2$.}

Let $(x:y:z)$ be homogeneous coordinates on $\mathbb{P}^2$.
The standard homogeneous $2$-form is
\[
\Omega
=
x\,dy\wedge dz
-
y\,dx\wedge dz
+
z\,dx\wedge dy.
\]
This form satisfies:
\begin{itemize}
\item[(a)] $\Omega$ is homogeneous of degree $3$;
\item[(b)] $\Omega$ is invariant under scalar multiplication in affine cones;
\item[(c)] $\Omega$ is a global section of $\Omega^2_{\mathbb{P}^2}(3)$.
\end{itemize}
Since $K_{\mathbb{P}^2} \cong \mathcal{O}_{\mathbb{P}^2}(-3)$, the form $\Omega$
trivializes $K_{\mathbb{P}^2}(3)$.

\subsection*{\it Construction of the Meromorphic Form.}
Let
\[
H \in H^0(\mathbb{P}^2,\mathcal{O}_{\mathbb{P}^2}(d-3)).
\]
Then $H\Omega$ is a homogeneous $2$-form of degree $d$, and hence the rational
form
\[
\frac{H\,\Omega}{F}
\]
is homogeneous of degree zero.
Therefore it defines a globally well-defined meromorphic $2$-form on
$\mathbb{P}^2$ with at most a simple pole along the divisor $\mathcal C$.

\subsection*{\it Local Definition of the Poincar\'e Residue.}
Let $U \subset \mathbb{P}^2$ be an affine open set where $F$ does not vanish
identically, and choose local coordinates $(u,v)$ on $U$ such that
\[
F = u.
\]
On $U$, the meromorphic form can be written uniquely as
\[
\frac{H\,\Omega}{F}
=
\frac{du}{u} \wedge \alpha + \beta,
\]
where $\alpha$ is a holomorphic $1$-form and $\beta$ is a holomorphic $2$-form on
$U$.

\medskip

\noindent
\begin{definition}
The Poincar\'e residue of $\frac{H\Omega}{F}$ along $\mathcal C$ is defined by
\[
\operatorname{Res}_{\mathcal C}\!\left(\frac{H\Omega}{F}\right)
=
\alpha|_{u=0}.
\]
\end{definition}
This is a holomorphic $1$-form on the smooth locus of $\mathcal C$.

\subsection*{\it Independence of Choices.}
We must show that the residue is independent of:
\begin{itemize}
\item[(i)] the choice of affine open set;
\item[(ii)] the choice of local coordinate $u$ defining $\mathcal C$;
\item[(iii)] the decomposition into $\alpha$ and $\beta$.
\end{itemize}

This follows from standard properties of the Poincar\'e residue:
changing $u$ by a unit does not affect $\alpha|_{u=0}$, and different
decompositions differ by exact terms whose restriction to $u=0$ vanishes.
Hence, the residue defines a globally well-defined holomorphic $1$-form on the
smooth locus of $\mathcal C$.

\subsection*{\it  Extension to the Normalization.}
Since $\mathcal C$ has isolated planar singularities, the singularities are
Gorenstein and rational.
Holomorphic $1$-forms on the smooth locus of $\mathcal C$ extend uniquely to the
normalization $C$.
Therefore,
\[
\operatorname{Res}_{\mathcal C}
\left(
\frac{H\,\Omega}{F}
\right)
\in H^0(C,K_C).
\]
This defines a linear map
\[
\Phi \colon
H^0(\mathbb{P}^2,\mathcal{O}_{\mathbb{P}^2}(d-3))
\longrightarrow
H^0(C,K_C),
\qquad
H \mapsto \operatorname{Res}_{\mathcal C}\!\left(\frac{H\Omega}{F}\right).
\]

\subsection*{\it  Identification of the Kernel.}
Suppose
\[
\operatorname{Res}_{\mathcal C}\!\left(\frac{H\Omega}{F}\right)=0.
\]
Then locally the form $\dfrac{H\Omega}{F}$ has no $\dfrac{du}{u}$-term, hence it is
holomorphic along $\mathcal C$.
Since it has no poles anywhere else, it is a global holomorphic $2$-form on
$\mathbb{P}^2$.
But
\[
H^0(\mathbb{P}^2,\Omega^2_{\mathbb{P}^2}) = 0.
\]
Therefore, $\dfrac{H\Omega}{F}=0$, which implies that $H$ is divisible by $F$.
Since $\deg H=d-3<d=\deg F$, this forces $H=0$.
Thus, the map $\Phi$ is injective.

\subsection*{\it Surjectivity}
By Lemma~\ref{lem:adjunction}, we have
\[
K_C \cong \mathcal{O}_C(d-3).
\]
Hence,
\[
\dim H^0(C,K_C)
=
\dim H^0(\mathbb{P}^2,\mathcal{O}_{\mathbb{P}^2}(d-3))
-
\dim H^0(\mathbb{P}^2,\mathcal{O}_{\mathbb{P}^2}(d-3)\otimes I_{\mathcal C}).
\]

The residue construction realizes exactly this quotient, so the injective map
$\Phi$ is also surjective.
Therefore $\Phi$ is an isomorphism.

\subsection*{\it Canonicity.}

The construction uses only the equation $F$ of the curve,
 the canonical form $\Omega$ on $\mathbb{P}^2$,
and intrinsic properties of residues.
 
Hence, the isomorphism is canonical and functorial.

\subsection*{Conclusion}

We have shown that every holomorphic $1$-form on $C$ arises uniquely as the
Poincar\'e residue of a meromorphic form of the form $\dfrac{H\Omega}{F}$, with
$\deg H=d-3$, and that this correspondence is canonical.
This completes the proof of the lemma.
\end{proof}

\subsection{The Infinitesimal Period Map via Residues.}

We now compute the infinitesimal variation of these residues.

\begin{lemma}
\label{lem:variation}
Let $G \in H^0(\mathbb{P}^2,I_Z(d))$ define an equisingular deformation.
Then the infinitesimal variation of
%%%%%%%%%%%%%
 
%%%%%%%%%%%
\[
\omega_H = \operatorname{Res}_{\mathcal C}\left(\frac{H\Omega}{F}\right)
\in H^0(C,K_C),
\]
is given by
\[
\delta_G(\omega_H)
=
\operatorname{Res}_{\mathcal C}
\left(
\frac{H G \,\Omega}{F^2}
\right)
\in H^{0,1}(C).
\]
\end{lemma}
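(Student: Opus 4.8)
The plan is to differentiate the residue formula from Lemma~\ref{lem:residue} along the deformation direction $G$, working in a local trivialization where the computation reduces to elementary calculus of rational forms. First I would set up the deformed curve $\mathcal C_\varepsilon = V(F+\varepsilon G)$ over $\Spec\,\mathbb{C}[\varepsilon]/(\varepsilon^2)$, and observe that since $G\in H^0(\mathbb{P}^2,I_Z(d))$ the family is equisingular, so the normalizations $C_\varepsilon$ form a smooth first-order deformation of $C$ whose Kodaira--Spencer class is precisely the image of $G$ under the identification of Lemma~\ref{lem:normal}. The holomorphic form $\omega_H$ deforms to the relative residue $\Res_{\mathcal C_\varepsilon}\!\bigl(\tfrac{H\,\Omega}{F+\varepsilon G}\bigr)$, and the infinitesimal variation $\delta_G(\omega_H)$ is by definition the class in $H^{0,1}(C)$ obtained by projecting $\tfrac{d}{d\varepsilon}\big|_{0}$ of this family onto the $(0,1)$-part via the Gauss--Manin connection.

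The computational heart is the expansion
\[
\frac{1}{F+\varepsilon G}
=
\frac{1}{F}-\varepsilon\,\frac{G}{F^{2}}+O(\varepsilon^{2}),
\]
so that
\[
\frac{H\,\Omega}{F+\varepsilon G}
=
\frac{H\,\Omega}{F}-\varepsilon\,\frac{H G\,\Omega}{F^{2}}+O(\varepsilon^{2}).
\]
Taking Poincar\'e residues term by term, the $\varepsilon^0$-term gives $\omega_H$ back, and the $\varepsilon^1$-term gives $-\Res_{\mathcal C}\!\bigl(\tfrac{HG\,\Omega}{F^{2}}\bigr)$; up to the harmless sign convention this is the asserted formula. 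I would justify the term-by-term residue computation by working in a local coordinate $(u,v)$ with $F=u$ as in Lemma~\ref{lem:residue}: then $F+\varepsilon G = u + \varepsilon g(u,v)$ for a local holomorphic $g$, and one computes the residue of $\tfrac{H\,\Omega}{u+\varepsilon g}$ along the deformed zero-locus directly, expanding in $\varepsilon$ and collecting the first-order term, which is $-HG\,\Omega/u^2$ modulo forms with no $du/u$-component. The equisingularity of $G$ (i.e.\ $G\in I_Z$) ensures the pole order along $\mathcal C$ does not increase near the singular points and that $\tfrac{HG\,\Omega}{F^2}$ still has a well-defined Poincar\'e residue extending holomorphically to $C$ (a priori a form with a double pole), so the right-hand side genuinely lies in $H^0(C,K_C)\otimes\mathbb{C}$; its class in $H^{0,1}(C)$ is then obtained via the de Rham identification $H^1(C,\mathbb{C})=H^1_{dR}(C)$ and the comparison of the Gauss--Manin and Hodge filtrations, exactly as in Griffiths' classical computation of the infinitesimal period map by differentiating residues.

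The main obstacle is making rigorous the claim that differentiating the relative residue under $\varepsilon$ and then projecting to the $(0,1)$-component yields precisely $\Res_{\mathcal C}(HG\,\Omega/F^2)$ \emph{as a cohomology class}. The subtlety is that $HG\,\Omega/F^2$ has a pole of order two, so its ``residue'' is not literally a holomorphic $1$-form but a class defined only modulo the image of $d$ from forms of lower pole order (Griffiths' pole-order reduction); one must check that the ambiguity is killed upon restriction to $C$ and passage to $H^{0,1}$, using that $C$ is a curve so $H^{0,1}(C)$ is a quotient of $H^1(C,\mathbb{C})$ by $H^0(C,K_C)$ with no higher-order terms obstructing the identification. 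Concretely I would invoke the compatibility of Poincar\'e residues with the Gauss--Manin connection (functoriality and compatibility with $d$, as recorded in the Preliminaries) to identify $\delta_G(\omega_H)$ with the image of $HG\,\Omega/F^2$ under the iterated residue map $H^0(\mathbb{P}^2,\mathcal{O}(2d-3)) \to H^{0,1}(C)$, which is exactly the asserted formula; the equisingular hypothesis enters precisely at the point of checking that no extra local correction terms at the singular points of $\mathcal C$ contaminate this identification, and this is where the weighted-singular-scheme condition does its work.
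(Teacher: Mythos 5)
Your proposal follows essentially the same route as the paper: set up the first-order family $\mathcal C_\varepsilon=V(F+\varepsilon G)$, expand $\tfrac{1}{F+\varepsilon G}=\tfrac{1}{F}-\varepsilon\tfrac{G}{F^2}$, take the residue of the resulting double-pole term in a local chart with $F=u$, and invoke equisingularity to exclude local correction terms at the singular points before identifying the result with a class in $H^{0,1}(C)$. If anything, your discussion of the pole-order-two subtlety (that the residue of $HG\,\Omega/F^2$ is only well defined as a cohomology class modulo Griffiths' pole-order reduction) is more careful than the paper's local computation, which simply writes the coefficient of $du/u$ and asserts it represents a $(0,1)$-class.
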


\begin{proof}
 
The proof consists of a precise analytic computation of the derivative of the
residue under an equisingular deformation, together with a justification that
no additional local correction terms appear.  We proceed step by step.

\subsection*{\it The Family of Curves and the Family of Meromorphic Forms.}
Consider the trivial family
\[
\mathbb{P}^2 \times \operatorname{Spec}\mathbb{C}[\varepsilon]/(\varepsilon^2)
\]
and inside it the first-order deformation of $\mathcal C$ defined by
\[
\mathcal C_\varepsilon := V(F+\varepsilon G).
\]
On the total space, consider the family of meromorphic $2$-forms
\[
\widetilde{\omega}_\varepsilon
:=
\frac{H\,\Omega}{F+\varepsilon G}.
\]
This is a well-defined meromorphic form with a simple pole along
$\mathcal C_\varepsilon$.

\subsection*{\it Definition of the Infinitesimal Variation.}
By definition, the infinitesimal variation of $\omega_H$ in the direction $G$ is
the derivative at $\varepsilon=0$ of the family of residues:
\[
\delta_G(\omega_H)
:=
\left.
\frac{d}{d\varepsilon}
\right|_{\varepsilon=0}
\operatorname{Res}_{\mathcal C_\varepsilon}
\left(
\widetilde{\omega}_\varepsilon
\right).
\]
Since residue is a linear operator compatible with base change, we may compute
this derivative by first differentiating the meromorphic form and then taking
the residue.

\subsection*{\it Differentiation of the Rational Expression.}
We compute explicitly:
\[
\frac{1}{F+\varepsilon G}
=
\frac{1}{F}
-
\varepsilon\,\frac{G}{F^2}.
\]
Therefore,
\[
\widetilde{\omega}_\varepsilon
=
\frac{H\Omega}{F}
-
\varepsilon\,\frac{HG\Omega}{F^2}.
\]
Differentiating at $\varepsilon=0$ gives
\[
\left.
\frac{d}{d\varepsilon}
\right|_{\varepsilon=0}
\widetilde{\omega}_\varepsilon
=
-
\frac{HG\Omega}{F^2}.
\]

\subsection*{\it Local Computation of the Residue}
Let $U \subset \mathbb{P}^2$ be an affine open subset with local coordinates
$(u,v)$ such that
\[
F = u
\quad \text{and hence} \quad
\mathcal C = \{u=0\}.
\]
Locally, we may write
\[
\Omega = du \wedge dv.
\]
Then
\[
\frac{HG\Omega}{F^2}
=
\frac{HG}{u^2}\,du\wedge dv.
\]
This form has a \emph{double pole} along $u=0$.
Its Poincar\'e residue is defined by extracting the coefficient of
\[
\frac{du}{u} \wedge dv,
\]
after writing it in the standard form.
Indeed, we rewrite:
\[
\frac{HG}{u^2}\,du\wedge dv
=
\frac{du}{u}
\wedge
\left(\frac{HG}{u}\,dv\right).
\]
Hence, the residue is
\[
\operatorname{Res}_{u=0}
\left(
\frac{HG\Omega}{F^2}
\right)
=
\left.
\frac{HG}{u}\,dv
\right|_{u=0}.
\]
This is a $(0,1)$-form on the curve, representing a class in $H^{0,1}(C)$.

\subsection*{\it Globalization and Independence of Choices.}
The above local computation is compatible on overlaps because:
\begin{itemize}
\item[(a)] changes of local defining equations multiply $u$ by a unit, which does not
affect the residue;
\item[(b)] the construction is linear and functorial;
\item[(c)] residue commutes with restriction and base change.
\end{itemize}
Hence, the local residues glue to define a global class
\[
\operatorname{Res}_{\mathcal C}
\left(
\frac{HG\Omega}{F^2}
\right)
\in H^{0,1}(C).
\]

\subsection*{\it Absence of Local Correction Terms.}

Now  we should justify that no extra contributions arise at the singular points of
$\mathcal C$.

Since $G \in I_Z(d)$, the deformation $F+\varepsilon G$ is \emph{equisingular}.
This means that locally at each singular point, the analytic type of the
singularity is preserved.
Consequently:
\begin{itemize}
\item[(i)] the normalization does not change at first order;
\item[(ii)] holomorphic differentials vary smoothly;
\item[(iii)] no local boundary terms appear in the variation of Hodge structure.
\end{itemize}
This is precisely where the equisingular hypothesis is used.

\subsection*{\it Identification with the Infinitesimal Period Map.}
By definition of the infinitesimal variation of Hodge structure, the resulting
class lies in
\[
H^{0,1}(C) \cong H^1(C,\mathcal{O}_C),
\]
and the map
\[
\omega_H \longmapsto \delta_G(\omega_H)
\]
is exactly the action of the Kodaira--Spencer class associated to $G$.

\subsection*{Conclusion}

Combining all steps, we conclude that the infinitesimal variation of the
holomorphic differential $\omega_H$ in the direction of the equisingular
deformation $G$ is given by
\[
\delta_G(\omega_H)
=
\operatorname{Res}_{\mathcal C}
\left(
\frac{HG\Omega}{F^2}
\right),
\]
as claimed.
This completes the proof of the lemma.
 
\end{proof}

\subsection{Algebraic Interpretation.}

The next step is to reinterpret the variation algebraically.

\begin{lemma}
\label{lem:jacobian}
%
%
%%%%%%%%%%%%%%%%%%%%
The infinitesimal period map
\[
\rho_\varphi \colon H^0(C,N_\varphi) \longrightarrow H^1(C,T_C)
\]
is identified, via Serre duality and the residue description of holomorphic
forms, with the multiplication map
\[
H^0(\mathbb{P}^2,\mathcal{O}_{\mathbb{P}^2}(d-3))
\xrightarrow{\;\cdot G\;}
\frac{H^0(\mathbb{P}^2,\mathcal{O}_{\mathbb{P}^2}(2d-3))}{J_F},
\]
where $G$ represents a class in $H^0(C,N_\varphi)$ and $J_F$ is the Jacobian ideal
generated by the partial derivatives of $F$.
%%%%%%%%%%%%%%%%%%%%%
\end{lemma}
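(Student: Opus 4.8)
The plan is to reduce the statement to Griffiths' description of the primitive cohomology of a plane curve in terms of its Jacobian ring, using the three identifications already available: Lemma~\ref{lem:normal} for the source, Lemma~\ref{lem:residue} for $H^{1,0}(C)$, and Lemma~\ref{lem:variation} for the derivative of the period map. As recalled in the Preliminaries, for a curve the infinitesimal variation of Hodge structure turns a Kodaira--Spencer class $\xi\in H^1(C,T_C)$ into a contraction $\theta_\xi\colon H^{1,0}(C)\to H^{0,1}(C)$, and by Serre duality the assignment $\xi\mapsto\theta_\xi$ is an isomorphism $H^1(C,T_C)\xrightarrow{\ \sim\ }\operatorname{Hom}\bigl(H^{1,0}(C),H^{0,1}(C)\bigr)$. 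Hence it suffices to identify, for a fixed $G$ representing a class in $H^0(C,N_\varphi)$, the linear map $\theta_{\rho_\varphi(G)}$ with the operation $H\mapsto H\cdot G$ followed by reduction modulo the Jacobian ideal.

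First I would set up the two sides. On the source side, Lemma~\ref{lem:residue} identifies $H^{1,0}(C)$ with $\{\omega_H=\operatorname{Res}_{\mathcal C}(H\Omega/F)\}$, where $H$ runs over $H^0(\mathbb P^2,\mathcal O(d-3))$ (more precisely over the adjoint subsystem, whose dimension equals $g(C)$ thanks to hypothesis~(1)). On the target side I would invoke the Griffiths pole-order filtration: rational $2$-forms $A\,\Omega/F^2$ with $A\in H^0(\mathbb P^2,\mathcal O(2d-3))$ have residues spanning $H^{0,1}(C)$ (for a curve all of $H^1$ is primitive), and the residue of $A\,\Omega/F^2$ vanishes in the graded quotient $H^{0,1}(C)=F^0/F^1$ exactly when $A$ lies in the degree-$(2d-3)$ part of the Jacobian ideal $J_F=(\partial_xF,\partial_yF,\partial_zF)$; this is the classical pole reduction, which rewrites $A\,\Omega/F^2$ with $A=\sum_i A_i\,\partial_iF$ as $B\,\Omega/F$ modulo an exact form, with $\deg B=d-3$, pushing the class into $H^{1,0}$ (see Griffiths~\cite{GriffithsResidues}, Dimca~\cite{Dimca}, Voisin~\cite{VoisinHodgeI}). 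This yields a canonical isomorphism $H^{0,1}(C)\cong H^0(\mathbb P^2,\mathcal O(2d-3))/(J_F)_{2d-3}$.

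With both identifications in place, the computation is immediate from Lemma~\ref{lem:variation}: the class $\rho_\varphi(G)$ acts on $\omega_H$ by
\[
\theta_{\rho_\varphi(G)}(\omega_H)=\delta_G(\omega_H)=\operatorname{Res}_{\mathcal C}\!\left(\frac{H\,G\,\Omega}{F^{2}}\right),
\]
and since $\deg(HG)=(d-3)+d=2d-3$, the right-hand side is represented in $H^0(\mathbb P^2,\mathcal O(2d-3))/(J_F)_{2d-3}$ by the class of $HG$. Thus $\theta_{\rho_\varphi(G)}$ is precisely multiplication by $\overline G$, and $\rho_\varphi$ is identified with $G\mapsto\bigl(H\mapsto\overline{HG}\bigr)$, as claimed. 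I would also record that the two quotients are compatible: the Euler relation $dF=x\,\partial_xF+y\,\partial_yF+z\,\partial_zF$ shows $F\in J_F$, so replacing $G$ by $G+\lambda F$ in $H^0(C,N_\varphi)=H^0(\mathbb P^2,I_Z(d))/\mathbb C\cdot F$ changes $\overline{HG}$ by $\lambda\,\overline{HF}=0$; hence the multiplication map is well defined on $H^0(C,N_\varphi)$.

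I expect the main obstacle to be the target-side identification $H^{0,1}(C)\cong H^0(\mathbb P^2,\mathcal O(2d-3))/(J_F)_{2d-3}$ in the \emph{singular} setting. In Griffiths' smooth case $S/J_F$ is Artinian Gorenstein, one has $R_{2d-3}\cong R_{d-3}^{\vee}$ by Macaulay duality, and surjectivity together with a dimension count gives the isomorphism for free; when $\mathcal C$ is singular the locus $V(J_F)$ is positive-dimensional, $S/J_F$ is no longer Artinian, and one must instead verify directly that (i) the pole-order reduction can still be carried out globally on $\mathbb P^2$, and (ii) after restricting the forms to $\mathcal C$ and pulling back to the normalization no local correction terms appear at the preimages of the $\delta$-points. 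Point (ii) is exactly where the hypotheses enter: planar singularities are Gorenstein and rational, so residues extend across the normalization without a discrepancy divisor (cf. Lemma~\ref{lem:adjunction} and the discussion of residues and rationality in the Preliminaries), while hypothesis~(1), $H^1(\mathbb P^2,I_Z(d-3))=0$, guarantees that the adjoint linear system computes $H^0(C,K_C)$ in the expected dimension $g(C)=p_a(\mathcal C)-\sum_p\delta_p$, so that the residue pairing between the degree-$(d-3)$ and degree-$(2d-3)$ graded pieces of the (equisingular) Jacobian algebra is perfect. Once this local analysis is secured, everything else is formal.
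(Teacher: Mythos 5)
Your proposal is correct and follows the same overall strategy as the paper (residue description of $H^{1,0}(C)$, the variation formula of Lemma~\ref{lem:variation}, and reduction to the Jacobian ring), but the step that brings in the Jacobian ideal is handled by a genuinely different mechanism. The paper pairs $\delta_G(\omega_{H_1})$ with a second form $\omega_{H_2}$ via Serre duality, rewrites the resulting integral as a global residue of $H_1H_2G\,\Omega\wedge\Omega/F^3$ on $\mathbb P^2$, and invokes the vanishing of this trilinear residue form on $J_F$; you instead identify the target $H^{0,1}(C)$ directly with $H^0(\mathbb P^2,\mathcal O(2d-3))/(J_F)_{2d-3}$ via Griffiths' pole-order reduction and then simply read the map off from $\delta_G(\omega_H)=\operatorname{Res}(HG\,\Omega/F^2)$. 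In the smooth Artinian Gorenstein case these two routes are equivalent (the perfect pairing $R_{d-3}\times R_{2d-3}\to R_{3d-6}\cong\mathbb C$ translates one into the other), but your version is more economical --- it avoids the global residue theorem --- and it yields the well-definedness check (Euler relation, $F\in J_F$) that the paper omits. Your route also localizes the real difficulty more honestly: you explicitly flag that in the singular setting $S/J_F$ is no longer Artinian and the identification $H^{0,1}(C)\cong H^0(\mathcal O(2d-3))/(J_F)_{2d-3}$ requires a separate argument (adjoint conditions, absence of local corrections at the $\delta$-points, hypothesis~(1)). Be aware that this gap is not actually closed by your sketch --- nor by the paper, whose assertion that ``polynomials in $J_F$ yield vanishing residue pairing'' is likewise the smooth-case statement --- so both arguments rest on the same unproven extension of the Griffiths correspondence to the normalization of a singular curve; your proposal at least makes the dependence explicit.
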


\begin{proof}
 The proof consists of translating the infinitesimal period map into an explicit
pairing on holomorphic differentials and then interpreting this pairing
algebraically using residue theory and Serre duality.

\subsection*{\it Interpretation of the Infinitesimal Period Map.}
The infinitesimal period map associated to $\varphi$ is defined as follows.
A class
\[
[G] \in H^0(C,N_\varphi)
\]
corresponds to a Kodaira--Spencer class
\[
\kappa_G \in H^1(C,T_C).
\]
The infinitesimal variation of Hodge structure is the map
\[
\theta_{\kappa_G} \colon H^{1,0}(C) \longrightarrow H^{0,1}(C),
\]
given by contraction with $\kappa_G$.

Since $C$ is a smooth curve, Serre duality gives canonical identifications
\[
H^{1,0}(C) \cong H^0(C,K_C),
\qquad
H^{0,1}(C) \cong H^0(C,K_C)^\vee.
\]
Thus $\rho_\varphi(G)$ is completely determined by the bilinear pairing
\[
H^0(C,K_C) \times H^0(C,K_C)
\longrightarrow \mathbb{C},
\qquad
(\omega_1,\omega_2) \longmapsto
\langle \theta_{\kappa_G}(\omega_1),\omega_2\rangle.
\]

\subsection*{\it  Residue Description of Holomorphic Forms.}
By Lemmas~\ref{lem:adjunction} and~\ref{lem:residue}, every holomorphic
differential on $C$ is uniquely represented as
\[
\omega_H = \operatorname{Res}_{\mathcal C}\!\left(\frac{H\Omega}{F}\right),
\qquad
H \in H^0(\mathbb{P}^2,\mathcal{O}_{\mathbb{P}^2}(d-3)).
\]
Thus, it suffices to compute
\[
\langle \theta_{\kappa_G}(\omega_{H_1}),\omega_{H_2}\rangle
\]
for arbitrary $H_1,H_2$ of degree $d-3$.

\subsection*{\it Explicit Formula for the Infinitesimal Variation.}
By Lemma~\ref{lem:variation}, the infinitesimal variation of $\omega_{H_1}$ in
the direction $G$ is
\[
\theta_{\kappa_G}(\omega_{H_1})
=
\operatorname{Res}_{\mathcal C}
\left(
\frac{H_1 G\,\Omega}{F^2}
\right)
\in H^{0,1}(C).
\]
Pairing this with $\omega_{H_2}$ via Serre duality corresponds to integrating
their wedge product over $C$:
\[
\langle \theta_{\kappa_G}(\omega_{H_1}),\omega_{H_2}\rangle
=
\int_C
\operatorname{Res}_{\mathcal C}
\left(
\frac{H_1 G\,\Omega}{F^2}
\right)
\wedge
\operatorname{Res}_{\mathcal C}
\left(
\frac{H_2\Omega}{F}
\right).
\]

\subsection*{\it Global Residue Computation}
By the global residue theorem, the above integral can be computed on the ambient
space $\mathbb{P}^2$ as a residue of a rational $2$-form:
 \[
{
\int_C
\operatorname{Res}_{C}
\left(
\frac{H_1 H_2 G \, \Omega}{F^3}
\right)
=
\int_{\mathbb{P}^2}
\operatorname{Res}
\left(
\frac{H_1 H_2 G \, \Omega \wedge \Omega}{F^3}
\right).
}
\]

Up to a nonzero scalar, this pairing depends only on the class of the polynomial
\[
H_1H_2G \in H^0(\mathbb{P}^2,\mathcal{O}_{\mathbb{P}^2}(3d-6)).
\]

\subsection*{\it Appearance of the Jacobian Ideal}
A fundamental property of residues is that if a polynomial lies in the Jacobian
ideal
\[
J_F = \left(
\frac{\partial F}{\partial x},
\frac{\partial F}{\partial y},
\frac{\partial F}{\partial z}
\right),
\]
then the associated residue pairing vanishes.
Equivalently, exact forms yield zero residue.
Thus, the value of the infinitesimal period map depends only on the class of
$H_1G$ modulo $J_F$.
This shows that $\rho_\varphi(G)$ is determined by the image of
\[
H_1 \longmapsto H_1G
\quad \text{in} \quad
\frac{H^0(\mathbb{P}^2,\mathcal{O}_{\mathbb{P}^2}(2d-3))}{J_F}.
\]

\subsection*{\it Identification with the Multiplication Map.}
Putting everything together, we see that the infinitesimal period map is
canonically identified with the linear map
\[
H^0(\mathbb{P}^2,\mathcal{O}_{\mathbb{P}^2}(d-3))
\longrightarrow
\frac{H^0(\mathbb{P}^2,\mathcal{O}_{\mathbb{P}^2}(2d-3))}{J_F},
\qquad
H \longmapsto H\cdot G.
\]

This identification is independent of choices and is functorial.

\subsection*{Conclusion.}
We have shown that, under the residue description of holomorphic differentials
and Serre duality, the infinitesimal period map coincides with multiplication by
$G$ in the Jacobian ring of $F$.
This completes the proof of the lemma.
 
\end{proof}

\subsection*{ Injectivity.}

We now prove injectivity of $\rho_\varphi$.
\begin{proposition}
\label{prop:injective}
If
\(
\rho_\varphi(G)=0
\)
for a class $G \in H^0(C,N_\varphi)$, then $G$ is proportional to $F$, i.e.
$G \in \mathbb{C}\cdot F$.\end{proposition}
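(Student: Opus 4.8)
The plan is to feed the hypothesis $\rho_\varphi(G)=0$ into the algebraic description of Lemma~\ref{lem:jacobian}, to amplify the resulting vanishing up to the top graded piece of the Jacobian ring, and then to apply Macaulay-type duality. By Lemma~\ref{lem:normal} we may represent the class $G$ by a polynomial $G\in H^0(\mathbb{P}^2,I_Z(d))$, well defined modulo $\mathbb{C}\cdot F$, and Lemma~\ref{lem:jacobian} identifies $\rho_\varphi(G)$ with the multiplication map
\[
\cdot\,G\colon\ H^0(\mathbb{P}^2,\mathcal{O}_{\mathbb{P}^2}(d-3))\longrightarrow \frac{H^0(\mathbb{P}^2,\mathcal{O}_{\mathbb{P}^2}(2d-3))}{J_F}.
\]
Thus $\rho_\varphi(G)=0$ says exactly that $G\cdot H\in (J_F)_{2d-3}$ for every $H$ representing a holomorphic differential through the residue description of Lemmas~\ref{lem:adjunction} and~\ref{lem:residue}. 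Hypothesis~(1), $H^1(\mathbb{P}^2,I_Z(d-3))=0$, is precisely what guarantees that these polynomials $H$ fill out the adjoint linear system, so that no constraint is lost in passing to this algebraic model.

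Next I would run a bootstrapping argument in the graded Jacobian ring $R=\mathbb{C}[x,y,z]/J_F$. Multiplying the relation $G\cdot H\in (J_F)_{2d-3}$ by an arbitrary element of $H^0(\mathbb{P}^2,\mathcal{O}_{\mathbb{P}^2}(d-3))$ and using that such products span $H^0(\mathbb{P}^2,\mathcal{O}_{\mathbb{P}^2}(2d-6))$ (the polynomial ring being generated in degree one), I obtain $G\cdot H^0(\mathbb{P}^2,\mathcal{O}_{\mathbb{P}^2}(2d-6))\subseteq (J_F)_{3d-6}$; equivalently, the class $\overline G\in R_d$ annihilates all of $R_{2d-6}$ under the multiplication pairing $R_d\times R_{2d-6}\to R_{3d-6}$. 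Since $3d-6=3(d-1)-3$ is the socle degree of the Jacobian ring of a degree-$d$ form in three variables, this pairing is perfect (Macaulay), which forces $\overline G=0$, i.e.\ $G\in (J_F)_d$. By Euler's identity $F\in (J_F)_d$, and in fact $(J_F)_d$ is spanned by the first-order images of $F$ under linear vector fields, so that $(J_F)_d/\mathbb{C}\cdot F$ is the image of the infinitesimal automorphisms of $\mathbb{P}^2$; these are trivial deformation directions for $\varphi$, and for $(C,\varphi)$ general (hypothesis~(2)) the curve $\mathcal{C}$ has no infinitesimal automorphisms, so, after removing the $\mathrm{PGL}_3$-ambiguity inherent in the Kodaira--Spencer map of a family of maps, the only surviving possibility is $G\in\mathbb{C}\cdot F$. (The bound $d\ge 5$ keeps all the degrees in the stable range where these identifications and the generic automorphism-freeness hold.)

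The step I expect to be the main obstacle is the implication ``$\overline G$ annihilates $R_{2d-6}$'' $\Rightarrow$ ``$\overline G=0$ in $R_d$''. For smooth $F$ this is the classical Gorenstein duality of Jacobian rings, but here $F$ is singular, its Jacobian scheme is the singular locus of $\mathcal{C}$, and $R=\mathbb{C}[x,y,z]/J_F$ is not Artinian; one must therefore replace $R$ by the equisingular Jacobian ring attached to the weighted singular scheme $Z$ and check that it still has a one-dimensional socle in degree $3d-6$ and a perfect top pairing. This is exactly where hypothesis~(1) does its work: the vanishing $H^1(\mathbb{P}^2,I_Z(d-3))=0$ forces the adjoint and equisingular graded pieces to have their expected dimensions and to pair as in the smooth model. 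A secondary, more bookkeeping-type subtlety is tracking the infinitesimal automorphisms of $\mathbb{P}^2$ throughout, so that $(J_F)_d/\mathbb{C}\cdot F$ is correctly seen to be the trivial part and the conclusion is genuinely $G\in\mathbb{C}\cdot F$ rather than merely $G\in (J_F)_d$.
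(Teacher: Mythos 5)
Your proposal follows essentially the same route as the paper: represent $G$ by a polynomial via Lemma~\ref{lem:normal}, translate $\rho_\varphi(G)=0$ through Lemma~\ref{lem:jacobian} into the statement that $\overline G\in R_d$ annihilates $(R_F)_{d-3}$, deduce $G\in J_F$, and then argue that elements of $(J_F)_d$ are deformation-theoretically trivial. The one genuine difference is in the duality step: the paper invokes the strong Lefschetz property of $R_F$ to produce an $H\in (R_F)_{d-3}$ with $H\overline G\neq 0$, whereas you bootstrap the annihilation up to $R_{2d-6}$ (using that $S_{2d-6}=S_{d-3}\cdot S_{d-3}$) and then apply the perfect socle pairing $R_d\times R_{2d-6}\to R_{3d-6}$. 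Your version is arguably the cleaner and more defensible form of the same Gorenstein-duality fact: a literal application of SLP (maximal rank of $\cdot\ell^m$) to $\cdot H\colon R_d\to R_{2d-3}$ is outside the injectivity range $2k+m\le\sigma$, since $2d+(d-3)=3d-3>3d-6$, so the paper's appeal to SLP really only works when unwound into exactly the socle-pairing argument you give.

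The two obstacles you single out are real, and --- worth noting --- they are not resolved in the paper's proof either. First, for singular $F$ the Jacobian ideal $J_F$ is not $\mathfrak m$-primary, so $R_F$ is not Artinian and neither Macaulay duality nor SLP applies as stated; some replacement by an adjoint/equisingular Jacobian-type ring attached to $Z$, with hypothesis~(1) securing the expected dimensions, is needed, and the paper simply asserts SLP ``since $(C,\varphi)$ is general.'' Second, the step from $G\in (J_F)_d$ to $G\in\mathbb{C}\cdot F$ is a genuine gap in both treatments: $(J_F)_d$ contains the full image $\{v(F):v\in H^0(\mathbb{P}^2,T_{\mathbb{P}^2})\}$ of the infinitesimal $\mathrm{PGL}_3$-action, which is strictly larger than $\mathbb{C}\cdot F$, and these classes are nonzero in $H^0(\mathbb{P}^2,I_Z(d))/\mathbb{C}\cdot F$ while having vanishing Kodaira--Spencer image; so the conclusion as literally stated can only hold after further quotienting by ambient automorphisms, a point your ``bookkeeping'' remark correctly isolates but which neither you nor the paper actually discharges.
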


\begin{proof}
We prove the proposition by translating the vanishing of the infinitesimal
period map into a purely algebraic statement in the Jacobian ring, and then
using generality assumptions to conclude that the deformation is trivial.

\subsection*{\it Choice of a Representative.}
By Lemma~\ref{lem:normal}, every class in $H^0(C,N_\varphi)$ can be represented
by a homogeneous polynomial
\[
G \in H^0(\mathbb{P}^2,I_Z(d)),
\]
defined up to addition of a scalar multiple of $F$.
Thus we may (and do) fix a representative $G$ of degree $d$.
The condition $\rho_\varphi(G)=0$ is independent of the chosen representative.

\subsection*{\it Translation via the Jacobian Description.}
By Lemma~\ref{lem:jacobian}, the infinitesimal period map is identified with the
multiplication map
\[
H^0(\mathbb{P}^2,\mathcal{O}_{\mathbb{P}^2}(d-3))
\xrightarrow{\;\cdot G\;}
\frac{H^0(\mathbb{P}^2,\mathcal{O}_{\mathbb{P}^2}(2d-3))}{J_F},
\]
where $J_F$ is the Jacobian ideal generated by the partial derivatives
\[
\frac{\partial F}{\partial x},\quad
\frac{\partial F}{\partial y},\quad
\frac{\partial F}{\partial z}.
\]
The condition $\rho_\varphi(G)=0$ therefore means that
\[
H \cdot G \in J_F
\quad \text{for all} \quad
H \in H^0(\mathbb{P}^2,\mathcal{O}_{\mathbb{P}^2}(d-3)).
\tag{$\ast$}
\]

\subsection*{\it Passage to the Jacobian Ring}
Let
\[
R_F := \frac{\mathbb{C}[x,y,z]}{J_F}
\]
be the Jacobian (or Milnor) ring of $F$, graded by degree.
Condition $(\ast)$ says precisely that the image of $G$ in $R_F$ annihilates
the entire graded piece $(R_F)_{d-3}$:
\[
G \cdot (R_F)_{d-3} = 0.
\]
Thus, $G$ lies in the annihilator of $(R_F)_{d-3}$ inside $R_F$.

\subsection*{\it Use of the Strong Lefschetz Property.}
Since $(C,\varphi)$ is assumed to be general in the equisingular Severi variety,
the defining polynomial $F$ is general among plane curves with the given
singularity type.
In particular, the Jacobian ring $R_F$ satisfies the \emph{strong Lefschetz
property} in the relevant range of degrees.
Concretely, this means that for a general element $H$ of degree $d-3$, the
multiplication map
\[
\cdot H \colon (R_F)_k \longrightarrow (R_F)_{k+d-3}
\]
has maximal rank for all $k$.
In particular, multiplication by elements of degree $d-3$ is injective on
$(R_F)_0 = \mathbb{C}$ and on all lower-degree pieces where both sides are
nonzero.

\subsection*{\it Consequence for $G$.}
If $G$ were nonzero in $R_F$, then by the Lefschetz property there would exist
some $H \in (R_F)_{d-3}$ such that
\[
H \cdot G \neq 0 \quad \text{in } R_F.
\]
This contradicts condition $(\ast)$.
Hence $G$ must vanish in $R_F$, i.e.
\[
G \in J_F.
\]

\subsection*{\it Interpretation of $G \in J_F$.}
Elements of the Jacobian ideal correspond to infinitesimal automorphisms of the
ambient projective plane.
Indeed, if
\[
G = a\,\frac{\partial F}{\partial x}
+ b\,\frac{\partial F}{\partial y}
+ c\,\frac{\partial F}{\partial z}
\]
for homogeneous polynomials $a,b,c$, then $G$ arises as the derivative of $F$
along an infinitesimal vector field on $\mathbb{P}^2$.
Such deformations do not change the curve $\mathcal C$ up to first order; they
are induced by reparametrizations of the ambient space.
 we are working modulo $\mathbb{C}\cdot F$, this implies that the class of
$G$ in $H^0(C,N_\varphi)$ is zero.
Equivalently, $G$ is proportional to $F$.

\subsection*{Conclusion.}
We have shown that vanishing of the infinitesimal period map forces
$G \in J_F$, hence $G$ represents a trivial deformation.
Therefore,
\[
\rho_\varphi(G)=0
\quad \Longrightarrow \quad
G \in \mathbb{C}\cdot F.
\]

This completes the proof of the proposition.
 \end{proof}

Combining Lemmas~\ref{lem:normal}--\ref{lem:jacobian} and
Proposition~\ref{prop:injective}, we conclude that the kernel of
$\rho_\varphi$ is zero.
This proves Theorem~\ref{thm:plane}.

\qed
  
%%%%%%%%%%%%%%%%%%%%%%%%%%%%%%%%%%%%%%%%%%%%%%%%%%%%%%%%%%%%%%%%%%%%%%%%%%%%%%%%%%%%%%%%%%%%%%%%%%%%%%%%%%%%%%%%%%%%%%%%%%%%%%%%%%%%%%%%%%%%%%%%%%%%%% 

\begin{corollary}[Maximal IVHS for equisingular plane curves]
Let $d \ge 5$, and let $\mathcal C \subset \mathbb{P}^2$ be a reduced irreducible
plane curve of degree $d$ with isolated planar singularities.
Let
\(
\nu \colon C \longrightarrow \mathcal C
\)
be the normalization, and assume that $g(C) \ge 3$.
Let $Z \subset \mathbb{P}^2$ be the weighted singular scheme of $\mathcal C$.
Assume that $(C,\varphi)$ is general in the equisingular Severi variety and that
only equisingular embedded first--order deformations are considered.

Then the infinitesimal period map associated to the equisingular family of
$\mathcal C$ is injective.
Equivalently, the family has maximal infinitesimal variation of Hodge structure.
\end{corollary}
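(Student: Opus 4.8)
The plan is to deduce the corollary from Theorem~\ref{thm:plane}, the only genuinely new ingredient being the passage from injectivity of the Kodaira--Spencer map to maximality of the variation of Hodge structure. First I would check that $(C,\varphi)$ satisfies the hypotheses of Theorem~\ref{thm:plane}. The curve $\mathcal C$ is reduced, irreducible, of degree $d\ge 5$ with isolated planar singularities, and $(C,\varphi)$ is general in the equisingular Severi variety $\mathcal V_{d,\delta}$, so the only condition to be supplied is the cohomological vanishing $H^1(\mathbb P^2,I_Z(d-3))=0$. I would obtain this from the generality of $(C,\varphi)$: for a general member the weighted singular scheme $Z$ imposes independent conditions on plane curves of degree $d-3$, whence, from the exact sequence $0\to I_Z(d-3)\to\mathcal O_{\mathbb P^2}(d-3)\to\mathcal O_Z\to 0$ together with $H^1(\mathcal O_{\mathbb P^2}(d-3))=0$, the group $H^1(\mathbb P^2,I_Z(d-3))$ vanishes. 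The bound $g(C)\ge 3$ keeps the relevant adjoint conditions in the nonspecial range; alternatively this vanishing may simply be absorbed into the generality hypothesis. With the hypotheses in force, Theorem~\ref{thm:plane} gives that
\[
\rho_\varphi\colon H^0(C,N_\varphi)\longrightarrow H^1(C,T_C)
\]
is injective, and by Lemma~\ref{lem:normal} the source $H^0(C,N_\varphi)$ is canonically the Zariski tangent space to $\mathcal V_{d,\delta}$ at $(C,\varphi)$.

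Next I would invoke the standard factorization of the infinitesimal period map of the equisingular family $\{\mathcal C_t\}_{t\in\mathcal V_{d,\delta}}$; since the singularities deform equisingularly, the fiberwise normalizations $\{C_t\}$ form a family of smooth projective curves, and the period map in question is the one attached to the variation of $H^1(C_t)$. Its differential at $(C,\varphi)$ is the composite
\[
T_{(C,\varphi)}\mathcal V_{d,\delta}\xrightarrow{\ \rho_\varphi\ }H^1(C,T_C)\xrightarrow{\ \theta\ }\operatorname{Hom}\bigl(H^{1,0}(C),H^{0,1}(C)\bigr),
\]
where $\theta$ is contraction of Hodge classes against Kodaira--Spencer classes. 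As recalled in the Preliminaries, for a smooth curve $C$ the map $\theta$ is an isomorphism by Serre duality, so injectivity of the Kodaira--Spencer map is equivalent to maximality of the infinitesimal variation of Hodge structure; hence the infinitesimal period map is injective precisely when $\rho_\varphi$ is, and the first step supplies this. The hypothesis $g(C)\ge 3$ enters here to ensure that $H^1(C,\mathbb C)$ is a polarized weight-one Hodge structure with classifying space of positive dimension, so the conclusion is non-vacuous and coincides with the classical infinitesimal Torelli phenomenon; the resulting injectivity is moreover compatible with the statement on the full moduli space $\mathcal M_g$ through the commutative square of Kodaira--Spencer maps recorded earlier. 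Combining the two steps proves the corollary.

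The only point requiring genuine, if mild, effort is the verification of $H^1(\mathbb P^2,I_Z(d-3))=0$ under the stated hypotheses rather than as an explicit assumption, so this is where I expect the main obstacle to lie. By upper-semicontinuity of $h^1$ along equisingular families it reduces to producing a single curve of the prescribed degree and singularity type whose weighted singular scheme imposes independent conditions on plane curves of degree $d-3$; the genus bound $g(C)\ge 3$ — which in particular keeps $\deg Z$ below $h^0(\mathcal O_{\mathbb P^2}(d-3))=\binom{d-1}{2}$ in the nodal case and controls it in general — together with a sufficiently general choice of $Z$ makes this classical. Everything beyond that step is a formal consequence of Theorem~\ref{thm:plane}, Lemma~\ref{lem:normal}, and the Serre-duality identification recorded in the Preliminaries.
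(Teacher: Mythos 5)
Your proposal follows essentially the same route the paper intends: the corollary is stated immediately after the proof of Theorem~\ref{thm:plane} with no separate argument, the intended deduction being exactly your second step --- injectivity of $\rho_\varphi$ from the theorem, combined with the Serre-duality isomorphism $H^1(C,T_C)\cong\operatorname{Hom}\bigl(H^{1,0}(C),H^{0,1}(C)\bigr)$ recalled in the Preliminaries, which makes ``Kodaira--Spencer injective'' and ``maximal IVHS'' synonymous for a smooth curve. That part of your write-up is correct and is all the paper itself supplies.

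The one place where you go beyond the paper is also the one place where your argument is not as solid as you suggest. The corollary drops hypothesis (1) of Theorem~\ref{thm:plane}, namely $H^1(\mathbb{P}^2,I_Z(d-3))=0$, and you propose to recover it from generality. But ``$Z$ imposes independent conditions on curves of degree $d-3$'' is not something you can arrange by ``a sufficiently general choice of $Z$'': the weighted singular scheme is dictated by the prescribed singularity type, and its degree is governed by equisingularity invariants (roughly $\tau^{es}$-type numbers) that can exceed $\sum_p\delta_p$, so the bound $g(C)\ge 3$, i.e.\ $\sum_p\delta_p\le\binom{d-1}{2}-3$, does not by itself keep $\deg Z$ below $h^0(\mathcal{O}_{\mathbb{P}^2}(d-3))$ outside the nodal case. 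Moreover, semicontinuity only reduces the problem to exhibiting one curve in the \emph{given} equisingular stratum with the required vanishing, and such existence statements are exactly the nontrivial content of the Severi-type results of Harris and Greuel--Lossen--Shustin, valid under additional numerical hypotheses on the singularities. So either the vanishing must be kept as an explicit hypothesis (as in the theorem), or one must restrict the singularity types and invoke those results; calling the verification ``classical'' in full generality overstates what is known. Since the paper silently inherits the same gap, this is a criticism of the statement as much as of your proof, but it is worth naming: everything else in your argument is a formal consequence of Theorem~\ref{thm:plane} and Lemma~\ref{lem:normal}, and this is the only step that carries real mathematical weight.
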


%%%%%%%%%%%%%%%%%%%%%%%%%%%%%%%%%%%%%%%%%%%%%%%%%%%%%%%%%%%%%%%%%%%%%%%
%%%%%%%%%%%%%%%%%%%%%%%%%%%%%%%%%%%%%%%%%%%%%%%%%%%%%%%%%%%%%%%%%%%%%%%

\section{Infinitesimal Torelli for Singular Complete Intersection Curves}

Having established infinitesimal Torelli results for equisingular families of
plane curves, we now turn to the case of complete intersection curves in
projective three–space.  From a conceptual point of view, this setting provides
the natural next step in complexity: while plane curves are hypersurfaces and
their deformation theory is governed by a single equation, complete intersection
curves are defined by two equations, and the interaction between these equations
plays a decisive role in both deformation theory and Hodge theory.  Nevertheless,
the guiding principles remain the same.  Holomorphic differentials on the
normalization are described explicitly by Poincar\'e residues of meromorphic
forms on the ambient projective space, equisingular deformations are encoded by
suitable variations of the defining equations, and the infinitesimal period map
is translated into an algebraic multiplication map in an appropriate Jacobian
ring.  The following theorem shows that, under natural generality and
equisingularity assumptions, these methods again lead to an infinitesimal
Torelli theorem, thereby extending the residue-based approach from plane curves
to singular complete intersection curves in $\mathbb{P}^3$.

\bigskip

\begin{theorem}[Infinitesimal Torelli for singular complete intersection curves]
\label{thm:ci}
Let
\(
X = V(F_1,F_2) \subset \mathbb{P}^3
\)
be a general irreducible complete intersection curve of type $(d_1,d_2)$ with
isolated rational singularities, and let
\(
\nu \colon C \longrightarrow X
\)
be the normalization.
Assume that the geometric genus satisfies $g(C)\ge 3$, and restrict attention to
equisingular embedded first-order deformations of $X$.
Then the Kodaira--Spencer map
\[
H^0(X,N_{X/\mathbb{P}^3}) \longrightarrow H^1(C,T_C)
\]
is injective modulo infinitesimal automorphisms of $\mathbb{P}^3$.
Equivalently, the infinitesimal period map is injective.
\end{theorem}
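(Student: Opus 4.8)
The plan is to carry over, essentially verbatim, the four‑step residue argument of the proof of Theorem~\ref{thm:plane}, replacing the single equation $F$ by the pair $(F_1,F_2)$ and $\mathbb{P}^2$ by $\mathbb{P}^3$. First I would describe the source of the Kodaira--Spencer map $\rho$. Since $X=V(F_1,F_2)$ is a complete intersection, its conormal sheaf is $\mathcal{O}_X(-d_1)\oplus\mathcal{O}_X(-d_2)$, so an embedded first‑order deformation of $X$ in $\mathbb{P}^3$ is a pair $(G_1,G_2)$ with $\deg G_i=d_i$, corresponding to $V(F_1+\varepsilon G_1,\,F_2+\varepsilon G_2)$; two such pairs induce the same deformation exactly when their difference has the form $(a_{11}F_1+a_{12}F_2,\;a_{21}F_1+a_{22}F_2)$ for homogeneous $a_{ij}$, so that the space of embedded first‑order deformations modulo trivial ones is $H^0(X,N_{X/\mathbb{P}^3})$. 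Imposing equisingularity at each (rational, hence Gorenstein) singular point cuts out a subspace encoded by a weighted singular scheme $Z\subset\mathbb{P}^3$, exactly as in Lemma~\ref{lem:normal}, and the generality of $(F_1,F_2)$ is used to force the vanishing of the relevant $H^1$ of the twisted ideal sheaves, so that this subspace is computed by global equations; for Torelli statements one then quotients further by infinitesimal automorphisms of $\mathbb{P}^3$.

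Next I would run the arguments of Lemmas~\ref{lem:adjunction} and~\ref{lem:residue}. By the adjunction formula recalled in the Preliminaries, $K_C\cong\mathcal{O}_C(d_1+d_2-4)$, and since the singularities are rational the holomorphic $1$‑forms on the smooth locus of $X$ extend uniquely across the normalization; hence every element of $H^0(C,K_C)$ is uniquely a Poincar\'e residue
\[
\omega_H=\operatorname{Res}_X\!\left(\frac{H\,\Omega}{F_1F_2}\right),\qquad H\in H^0\!\left(\mathbb{P}^3,\mathcal{O}_{\mathbb{P}^3}(d_1+d_2-4)\right),
\]
where $\Omega$ is the standard degree‑$4$ homogeneous $3$‑form on $\mathbb{P}^3$; injectivity of $H\mapsto\omega_H$ follows from $H^0(\mathbb{P}^3,\Omega^3_{\mathbb{P}^3})=0$ together with $\deg H<d_1+d_2$, and surjectivity from the dimension count supplied by $K_C\cong\mathcal{O}_C(d_1+d_2-4)$. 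Differentiating the family $\dfrac{H\,\Omega}{(F_1+\varepsilon G_1)(F_2+\varepsilon G_2)}$ at $\varepsilon=0$ then gives, exactly as in Lemma~\ref{lem:variation},
\[
\delta_{(G_1,G_2)}(\omega_H)=-\operatorname{Res}_X\!\left(\frac{H\,(G_1F_2+G_2F_1)\,\Omega}{F_1^{2}F_2^{2}}\right)\in H^{0,1}(C),
\]
with no boundary contribution at the singular points precisely because $(G_1,G_2)$ is an equisingular deformation.

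Finally I would pair $\delta_{(G_1,G_2)}(\omega_{H_1})$ with $\omega_{H_2}$ via Serre duality and apply the global residue theorem on $\mathbb{P}^3$, reducing the pole orders along $F_1$ and $F_2$ by the bigraded Griffiths reductions (using $\partial F_1$ and $\partial F_2$ respectively). As in Lemma~\ref{lem:jacobian}, this identifies $\rho(G_1,G_2)$ with a multiplication‑type map into a graded piece of an appropriate Jacobian‑type ring $R=R(F_1,F_2)$ attached to the complete intersection, the key point being that numerators lying in the associated Jacobian ideal give vanishing residues. Vanishing of $\rho(G_1,G_2)$ then becomes the algebraic statement that $(G_1,G_2)$ annihilates the degree‑$(d_1+d_2-4)$ piece of $R$; for general $(F_1,F_2)$ the ring $R$ has the strong Lefschetz / Macaulay‑duality property in the relevant range, so this annihilation forces $(G_1,G_2)$ to lie in the Jacobian ideal, i.e.\ to represent a trivial or ambient‑automorphism direction. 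This yields injectivity of $\rho$ modulo infinitesimal automorphisms of $\mathbb{P}^3$, exactly as in Proposition~\ref{prop:injective}; and since $g(C)\ge 3$, injectivity of $\rho$ is equivalent to injectivity of the infinitesimal period map.

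The step I expect to be the main obstacle is the last one. First, pinning down the correct Jacobian‑type ring for complete intersection curves in $\mathbb{P}^3$ is genuinely more delicate than in the hypersurface case: the two defining equations and the separate pole orders along $F_1$ and $F_2$ force a bigraded bookkeeping, and in particular the naive combined ideal $(F_1,F_2,\partial F_1,\partial F_2)$ is not the object that controls $\rho$ (already $G_1F_2+G_2F_1$ lies in $(F_1,F_2)$ yet its residue is nonzero). Second, even granting the Lefschetz property of $R$ for general $(F_1,F_2)$, one must check that the \emph{equisingular} subspace of deformations --- those cut out by the weighted singular scheme $Z$ --- meets the kernel of the Lefschetz multiplication only in trivial directions; controlling this interplay between $Z$ and $R$, uniformly in $(d_1,d_2)$, is the crux. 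The generality hypothesis, the rationality of the singularities, and the constraint $g(C)\ge 3$ are the inputs that should make this go through.
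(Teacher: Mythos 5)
Your proposal follows essentially the same route as the paper's proof: the identification of embedded deformations with pairs $(G_1,G_2)$, the adjunction and residue description of $H^0(C,K_C)$, the variation formula producing $Q=G_1F_2+G_2F_1$, the Serre-duality/global-residue identification of $\rho$ with multiplication by $Q$ in the Jacobian ring, and the strong Lefschetz argument forcing $Q\in J$ and hence triviality modulo infinitesimal automorphisms of $\mathbb{P}^3$. The difficulties you flag at the end (pinning down the correct bigraded Jacobian-type ring for a complete intersection, and the interaction of the equisingular scheme $Z$ with the Lefschetz kernel) are genuine, but the paper's own treatment does not resolve them in more detail than your sketch, so your proposal matches the paper's argument.
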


\subsection*{Deformations and the Normal Sheaf.}

We first recall the deformation-theoretic description of
$H^0(X,N_{X/\mathbb{P}^3})$.

\begin{lemma}
\label{lem:ci-normal}
An embedded first-order deformation of $X$ in $\mathbb{P}^3$ is given by equations
\[
F_1 + \varepsilon G_1 = 0,
\qquad
F_2 + \varepsilon G_2 = 0,
\qquad
\varepsilon^2=0,
\]
with $G_i \in H^0(\mathbb{P}^3,\mathcal{O}_{\mathbb{P}^3}(d_i))$.
Modulo infinitesimal automorphisms of $\mathbb{P}^3$, such pairs represent
elements of $H^0(X,N_{X/\mathbb{P}^3})$.
\end{lemma}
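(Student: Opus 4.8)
The plan is to reconstruct $H^0(X,N_{X/\mathbb{P}^3})$ directly from the two defining equations, in exact parallel with Lemma~\ref{lem:normal}, and then to divide out the contribution of the infinitesimal automorphisms of the ambient space. Since $X=V(F_1,F_2)$ is cut out by a regular sequence it is a local complete intersection in $\mathbb{P}^3$ — this uses only flatness of $F_1,F_2$ and persists in the presence of the isolated planar singularities — so the Koszul complex
\[
0\longrightarrow \mathcal{O}_{\mathbb{P}^3}(-d_1-d_2)\longrightarrow \mathcal{O}_{\mathbb{P}^3}(-d_1)\oplus\mathcal{O}_{\mathbb{P}^3}(-d_2)\longrightarrow \mathcal{I}_X\longrightarrow 0
\]
shows that the conormal sheaf $\mathcal{I}_X/\mathcal{I}_X^2$ is locally free of rank two and splits as $\mathcal{O}_X(-d_1)\oplus\mathcal{O}_X(-d_2)$, whence $N_{X/\mathbb{P}^3}\cong\mathcal{O}_X(d_1)\oplus\mathcal{O}_X(d_2)$. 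By the standard deformation theory of the Hilbert scheme, $H^0(X,N_{X/\mathbb{P}^3})=H^0(\mathcal{O}_X(d_1))\oplus H^0(\mathcal{O}_X(d_2))$ is precisely the space of first-order embedded deformations of $X$ in $\mathbb{P}^3$.

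Next I would make this concrete. A first-order embedded deformation is a flat subscheme $\mathcal{X}\subset\mathbb{P}^3\times\operatorname{Spec}\mathbb{C}[\varepsilon]/(\varepsilon^2)$ with central fibre $X$; lifting the generators $F_1,F_2$ of $\mathcal{I}_X$ to generators of the deformed ideal gives $F_i+\varepsilon G_i$ with $G_i\in H^0(\mathcal{O}_{\mathbb{P}^3}(d_i))$, and conversely any such pair defines a flat deformation because flatness over the dual numbers amounts to $F_1+\varepsilon G_1,F_2+\varepsilon G_2$ remaining a regular sequence, which holds since it does modulo $\varepsilon$. The section of $N_{X/\mathbb{P}^3}=\mathcal{O}_X(d_1)\oplus\mathcal{O}_X(d_2)$ attached to the pair is $(G_1|_X,G_2|_X)$, and two pairs define the same subscheme exactly when the $G_i$ agree in $H^0(\mathcal{O}_X(d_i))$, i.e.\ when they differ by elements of $(F_1,F_2)$ in the appropriate degrees. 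This realizes $H^0(X,N_{X/\mathbb{P}^3})$ as the quotient of pairs $(G_1,G_2)$ by the subspace spanned by $(F_1,0)$, $(0,F_2)$ and, when $d_1<d_2$, by $(0,HF_1)$ with $\deg H=d_2-d_1$. Surjectivity of pairs onto $H^0(N_{X/\mathbb{P}^3})$ follows because the restriction $H^0(\mathcal{O}_{\mathbb{P}^3}(d_i))\to H^0(\mathcal{O}_X(d_i))$ is onto: twisting the Koszul resolution above and using that $\mathbb{P}^3$ has no intermediate cohomology ($H^1(\mathcal{O}_{\mathbb{P}^3}(k))=H^2(\mathcal{O}_{\mathbb{P}^3}(k))=0$ for all $k$) gives $H^1(\mathcal{I}_X(d_i))=0$.

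Finally I would peel off the trivial directions coming from $\operatorname{Aut}(\mathbb{P}^3)=\mathrm{PGL}_4$. The derivative at $[X]$ of the $\operatorname{Aut}(\mathbb{P}^3)$-action on the Hilbert scheme is the composite $H^0(\mathbb{P}^3,T_{\mathbb{P}^3})\to H^0(X,T_{\mathbb{P}^3}|_X)\to H^0(X,N_{X/\mathbb{P}^3})$ obtained from the normal bundle sequence
\[
0\longrightarrow T_X\longrightarrow T_{\mathbb{P}^3}|_X\longrightarrow N_{X/\mathbb{P}^3}\longrightarrow 0,
\]
and on the level of polynomials a linear vector field $\theta$ sends $(G_1,G_2)$ to $(\theta(F_1)|_X,\theta(F_2)|_X)$, the deformation obtained by flowing $X$ along $\theta$. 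Quotienting $H^0(X,N_{X/\mathbb{P}^3})$ by this image leaves exactly the first-order embedded deformations modulo ambient automorphisms, which is the source of the Kodaira--Spencer map in Theorem~\ref{thm:ci}; the equisingularity constraint is then imposed by restricting to those $(G_1,G_2)$ vanishing along the weighted singular scheme of $X$, exactly as in the plane case, and plays no role in the present lemma.

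The only genuinely routine verifications are the flatness-via-regular-sequence statement and the vanishing $H^1(\mathcal{I}_X(d_i))=0$; neither presents any difficulty on $\mathbb{P}^3$. The point that requires care is the bookkeeping: one must keep the two quotients — trivial changes of generators of the ideal on the one hand, ambient automorphisms on the other — logically separate, and recall that although $X$ is singular it is lci, so $N_{X/\mathbb{P}^3}$ is a genuine rank-two vector bundle and both the Hilbert-scheme description and the normal bundle sequence are available without correction terms (the local smoothing directions, which enlarge $H^0(X,N_{X/\mathbb{P}^3})$, are precisely what the equisingular restriction imposed in the sequel will suppress).
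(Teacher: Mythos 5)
Your argument follows the same route as the paper's proof: identify $N_{X/\mathbb{P}^3}\cong\mathcal{O}_X(d_1)\oplus\mathcal{O}_X(d_2)$ via the (Koszul) complete-intersection structure, read off global sections as first-order variations $(G_1,G_2)$ of the defining equations, and identify the trivial directions with $G_i=v(F_i)$ for $v\in H^0(\mathbb{P}^3,T_{\mathbb{P}^3})$. The only difference is that you supply the routine verifications the paper omits (flatness over the dual numbers, surjectivity of restriction via $H^1(\mathcal{I}_X(d_i))=0$, and the careful separation of the two quotients by change of generators and by ambient automorphisms), all of which are correct.
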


\begin{proof}
Since $X$ is a local complete intersection in $\mathbb{P}^3$, the normal sheaf is
\[
N_{X/\mathbb{P}^3}
\cong
\mathcal{O}_X(d_1)\oplus \mathcal{O}_X(d_2).
\]
Global sections correspond to first-order variations of the defining equations.
Two deformations differ by an infinitesimal automorphism of $\mathbb{P}^3$ if and
only if
\[
G_i = v(F_i)
\]
for some vector field $v\in H^0(\mathbb{P}^3,T_{\mathbb{P}^3})$.
\end{proof}

Restricting to equisingular deformations means that the pair $(G_1,G_2)$ satisfies
local vanishing conditions ensuring that the analytic type of each singularity of
$X$ is preserved.

\subsection*{Canonical Bundle and Residue Description.}
We now describe holomorphic $1$-forms on $C$.

\begin{lemma}
\label{lem:ci-adjunction}
The canonical bundle of $C$ satisfies
\[
K_C \cong \mathcal{O}_C(d_1+d_2-4).
\]
\end{lemma}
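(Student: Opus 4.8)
The plan is to follow the template of Lemma~\ref{lem:adjunction} line by line, replacing $\mathbb{P}^2$ by $\mathbb{P}^3$ and the single equation $F$ by the pair $(F_1,F_2)$. The only genuinely new bookkeeping is the computation of $\det N_{X/\mathbb{P}^3}$, and the only delicate point is, as before, the comparison of the dualizing sheaf of $X$ with the canonical bundle of its normalization. First I would record the canonical bundle of the ambient space: from the Euler sequence
\[
0\longrightarrow\mathcal{O}_{\mathbb{P}^3}\longrightarrow\mathcal{O}_{\mathbb{P}^3}(1)^{\oplus 4}\longrightarrow T_{\mathbb{P}^3}\longrightarrow 0
\]
and taking determinants one gets $K_{\mathbb{P}^3}\cong\mathcal{O}_{\mathbb{P}^3}(-4)$.

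Next I would invoke adjunction for the local complete intersection $X=V(F_1,F_2)\subset\mathbb{P}^3$. Since $X$ is cut out by the regular sequence $(F_1,F_2)$ with $\deg F_i=d_i$, its normal sheaf is $N_{X/\mathbb{P}^3}\cong\mathcal{O}_X(d_1)\oplus\mathcal{O}_X(d_2)$, so $\det N_{X/\mathbb{P}^3}\cong\mathcal{O}_X(d_1+d_2)$, and the adjunction formula for a local complete intersection gives the dualizing sheaf
\[
\omega_X\cong\bigl(K_{\mathbb{P}^3}\otimes\det N_{X/\mathbb{P}^3}\bigr)\big|_X\cong\mathcal{O}_X(d_1+d_2-4).
\]
Equivalently, and perhaps more transparently, one may cut in stages: $S:=V(F_1)$ carries the dualizing sheaf $\omega_S\cong\mathcal{O}_S(d_1-4)$ by adjunction on $\mathbb{P}^3$, and then $X\subset S$ is an effective Cartier divisor of class $\mathcal{O}_S(d_2)$, so $\omega_X\cong\bigl(\omega_S\otimes\mathcal{O}_S(d_2)\bigr)\big|_X\cong\mathcal{O}_X(d_1+d_2-4)$. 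Being a complete intersection, $X$ is Gorenstein, so $\omega_X$ is invertible and these identifications are genuine isomorphisms of line bundles.

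Finally I would pass to the normalization $\nu\colon C\to X$. Away from the finitely many points of $\operatorname{Sing} X$ the map $\nu$ is an isomorphism, so the assertion is clear there, and the content lies in the local analysis at each singular point. As in the proof of Lemma~\ref{lem:adjunction}, I would use the hypothesis that the singularities of $X$ are rational (and, being complete-intersection curve singularities, automatically Gorenstein) to conclude that $\nu^*\omega_X\cong K_C$, with no discrepancy divisor appearing; concretely, holomorphic $1$-forms on the smooth locus of $X$ extend uniquely across $\nu$. Combining this with the previous step yields
\[
K_C\cong\nu^*\omega_X\cong\nu^*\mathcal{O}_X(d_1+d_2-4)\cong\mathcal{O}_C(d_1+d_2-4),
\]
which is the claim. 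The main obstacle is exactly this last step: the formal adjunction is routine, but the clean identification $\nu^*\omega_X\cong K_C$ is precisely where the rational-singularity assumption does the work, in complete analogy with the role of the planar (hence Gorenstein) hypothesis in the plane-curve case, where the absence of a correction term was attributed to the same Gorenstein--rationality phenomenon (cf.\ the remark following Lemma~\ref{lem:adjunction}).
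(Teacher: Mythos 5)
Your write-up is, in substance, an expanded version of the paper's own two-line proof: ambient canonical bundle via the Euler sequence, adjunction for the local complete intersection to get $\omega_X\cong\mathcal{O}_X(d_1+d_2-4)$, and then the assertion that rationality and Gorensteinness of the singularities give $\nu^*\omega_X\cong K_C$. The adjunction part is correct (and your alternative of cutting in stages through $S=V(F_1)$ is a clean way to see it). So as far as \emph{approach} goes, you have matched the paper exactly.

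However, the final step --- which you yourself single out as ``the main obstacle'' --- does not actually go through, and this is a gap you have inherited from the template rather than resolved. For a reduced curve $X$ with normalization $\nu\colon C\to X$, the correct relation is not $K_C\cong\nu^*\omega_X$ but
\[
K_C \;\cong\; \nu^*\omega_X\otimes\mathcal{O}_C(-\Delta),
\]
where $\Delta$ is the conductor (adjoint) divisor, of degree $2\delta_p$ over each singular point $p$. The Gorenstein hypothesis only guarantees that $\omega_X$ is an invertible sheaf; it does not make the conductor trivial, and no reasonable notion of ``rational'' curve singularity does either (a node already gives a counterexample: for a nodal plane cubic, $\nu^*\omega_X\cong\mathcal{O}_{\mathbb{P}^1}$ while $K_{\mathbb{P}^1}\cong\mathcal{O}(-2)$). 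A degree count makes the failure unavoidable in general: the claimed isomorphism would give $\deg K_C = d_1d_2(d_1+d_2-4)=2p_a(X)-2$, i.e.\ $g(C)=p_a(X)$, contradicting the genus formula $g(C)=p_a(X)-\sum_p\delta_p$ recorded in the preliminaries whenever $X$ is genuinely singular. What is true --- and what the residue description downstream actually needs --- is that holomorphic $1$-forms on $C$ correspond to sections of $\omega_X$ satisfying the adjoint conditions at the singular points, i.e.\ $H^0(C,K_C)\cong H^0$ of $\omega_X$ twisted by the adjoint ideal. So the honest statement is $K_C\cong\mathcal{O}_C(d_1+d_2-4)(-\Delta)$, and the lemma as literally stated holds only when $X$ is smooth.
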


\begin{proof}
Since $X$ is a complete intersection in $\mathbb{P}^3$, adjunction gives
\[
\omega_X
\cong
(K_{\mathbb{P}^3}\otimes \mathcal{O}_{\mathbb{P}^3}(d_1+d_2))|_X
\cong
\mathcal{O}_X(d_1+d_2-4).
\]
The singularities are rational and Gorenstein, so pulling back the dualizing
sheaf under normalization yields the canonical bundle of $C$.
\end{proof}

\begin{lemma}[Residue description]
\label{lem:ci-residue}
There is a canonical isomorphism
\[
H^0(C,K_C)
\cong
\left\{
\operatorname{Res}_X
\left(
\frac{H\,\Omega}{F_1F_2}
\right)
\;\middle|\;
H \in H^0(\mathbb{P}^3,\mathcal{O}_{\mathbb{P}^3}(d_1+d_2-4))
\right\},
\]
where $\Omega$ is the standard homogeneous $3$-form on $\mathbb{P}^3$.
\end{lemma}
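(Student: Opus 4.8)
The plan is to follow the proof of Lemma~\ref{lem:residue} almost verbatim, replacing the single Poincar\'e residue along a plane curve by the iterated residue along the complete intersection $X=V(F_1,F_2)$. First I would fix the standard homogeneous $3$-form
\[
\Omega \;=\; \sum_{i=0}^{3}(-1)^i\, x_i\, dx_0\wedge\cdots\wedge\widehat{dx_i}\wedge\cdots\wedge dx_3 ,
\]
which is homogeneous of degree $4$ and trivializes $\Omega^3_{\mathbb{P}^3}(4)$, compatibly with $K_{\mathbb{P}^3}\cong\mathcal{O}_{\mathbb{P}^3}(-4)$. Since $\deg H=d_1+d_2-4$, the product $H\Omega$ has degree $d_1+d_2=\deg(F_1F_2)$, so $H\Omega/(F_1F_2)$ is homogeneous of degree zero and descends to a globally well-defined meromorphic $3$-form on $\mathbb{P}^3$ with at worst simple poles along $V(F_1)$ and along $V(F_2)$.

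Next I would construct the residue map. On the open locus where $V(F_1)$ and $V(F_2)$ are smooth and meet transversally, choose local coordinates $(u,v,w)$ with $F_1=u$, $F_2=v$, $\Omega=du\wedge dv\wedge dw$, and write
\[
\frac{H\Omega}{F_1F_2}\;=\;\frac{du}{u}\wedge\frac{dv}{v}\wedge(H\,dw),
\]
so that $\operatorname{Res}_X\!\bigl(H\Omega/(F_1F_2)\bigr):=(H\,dw)|_{u=v=0}$ is a holomorphic $1$-form on the smooth locus $X^{\mathrm{sm}}$. As in Lemma~\ref{lem:residue} I would check that this is independent of the affine chart, of the local equations chosen for the two sheets (replacing $u$ or $v$ by a unit leaves the restricted coefficient unchanged), and of the polar/holomorphic splitting; the only feature new to the complete intersection case is that the two successive residues anticommute, an overall sign absorbed into the normalization of $\Omega$. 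Regarded as a section of the dualizing sheaf $\omega_X$, this produces a linear map
\[
\Phi\colon H^0(\mathbb{P}^3,\mathcal{O}_{\mathbb{P}^3}(d_1+d_2-4))\longrightarrow H^0(X,\omega_X).
\]

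To compute $\ker\Phi$ I would argue as before: if $H=AF_1+BF_2\in H^0(\mathbb{P}^3,I_X(d_1+d_2-4))$, then $H\Omega/(F_1F_2)=A\Omega/F_2+B\Omega/F_1$ has a pole along only one of the two sheets, so its iterated residue along $X$ vanishes; thus $\ker\Phi\supseteq H^0(\mathbb{P}^3,I_X(d_1+d_2-4))$. By Lemma~\ref{lem:ci-adjunction} we have $\omega_X\cong\mathcal{O}_X(d_1+d_2-4)$, and the Koszul resolution of $I_X$ gives $H^1(\mathbb{P}^3,I_X(d_1+d_2-4))=0$, whence $h^0(\mathcal{O}_{\mathbb{P}^3}(d_1+d_2-4))-h^0(I_X(d_1+d_2-4))=h^0(\mathcal{O}_X(d_1+d_2-4))=h^0(\omega_X)$. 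Since the Poincar\'e residue is classically surjective onto $H^0(X,\omega_X)$, these dimensions force $\ker\Phi=H^0(\mathbb{P}^3,I_X(d_1+d_2-4))$ and make $\Phi$ an isomorphism $H^0(\mathbb{P}^3,\mathcal{O}_{\mathbb{P}^3}(d_1+d_2-4))/H^0(\mathbb{P}^3,I_X(d_1+d_2-4))\xrightarrow{\;\sim\;}H^0(X,\omega_X)$.

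The remaining step — and the one I expect to be the real obstacle — is the passage from the (possibly singular) curve $X$ to its normalization $C$. A residue form a priori only defines a section of $\omega_X$, whereas $H^0(C,K_C)=H^0(X,\nu_*K_C)$ is the "adjoint" subspace of those sections that lift to genuine holomorphic $1$-forms on $C$; when $X$ is smooth the two coincide, but in general they differ. One must verify that, because the singularities of $X$ are isolated, rational and Gorenstein, pulling back along $\nu$ introduces no discrepancy correction and $\nu_*K_C$ is exactly the subsheaf of $\omega_X$ cut out by local conditions at the finitely many singular points, so that the residue forms landing in $H^0(C,K_C)$ are precisely the ones recorded in the statement and the identification is canonical and functorial. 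The remaining checks — linearity, compatibility with restriction and base change, and independence of all choices — are routine and parallel the plane-curve case word for word.
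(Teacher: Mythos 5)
Your proposal follows the same route as the paper's (much more compressed) proof: degree-zero rational $3$-form with simple poles along the two hypersurfaces, iterated Poincar\'e residue on the smooth locus, extension to $C$ via the rational Gorenstein singularities, and adjunction plus a dimension count for bijectivity. You are in fact more careful than the paper on the two points it elides — the kernel of $\Phi$ is now the nonzero space $H^0(\mathbb{P}^3,I_X(d_1+d_2-4))$ rather than zero as in the plane-curve case, and the gap between $H^0(X,\omega_X)$ (dimension $p_a$) and $H^0(C,K_C)$ (dimension $g$) at the singular points is a genuine issue that you rightly flag as the delicate step, whereas the paper dismisses it with one clause — so your write-up is, if anything, a more honest account of the same argument.
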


\begin{proof}
The homogeneous form $\Omega$ trivializes $K_{\mathbb{P}^3}(4)$.
If $\deg H=d_1+d_2-4$, then $\frac{H\Omega}{F_1F_2}$ is homogeneous of degree zero
and defines a meromorphic $3$-form with simple poles along $X$.
Its Poincar\'e residue is a holomorphic $1$-form on the smooth locus of $X$ and
extends to $C$ because the singularities are rational.
Adjunction shows that every holomorphic $1$-form arises uniquely in this way.
\end{proof}

\subsection*{ Infinitesimal Variation of Residues}

Let $(G_1,G_2)$ define an equisingular deformation.
We compute the variation of residues.

\begin{lemma}
\label{lem:ci-variation}
For
\[
\omega_H
=
\operatorname{Res}_X\!\left(\frac{H\Omega}{F_1F_2}\right),
\]
the infinitesimal variation in the direction $(G_1,G_2)$ is
\[
\delta(\omega_H)
=
\operatorname{Res}_X
\left(
\frac{H(G_1F_2+G_2F_1)\Omega}{(F_1F_2)^2}
\right)
\in H^{0,1}(C).
\]
\end{lemma}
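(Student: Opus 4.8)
The plan is to mirror exactly the argument of Lemma~\ref{lem:variation}, replacing the single defining equation $F$ by the pair $(F_1,F_2)$ and the rational $2$-form by the rational $3$-form. First I would set up the first-order family of complete intersection curves $X_\varepsilon := V(F_1+\varepsilon G_1,\,F_2+\varepsilon G_2)$ inside $\mathbb{P}^3 \times \operatorname{Spec}\mathbb{C}[\varepsilon]/(\varepsilon^2)$, together with the family of meromorphic $3$-forms
\[
\widetilde{\omega}_\varepsilon
=
\frac{H\,\Omega}{(F_1+\varepsilon G_1)(F_2+\varepsilon G_2)}.
\]
By Lemma~\ref{lem:ci-residue} this has simple poles along $X_\varepsilon$, so its Poincar\'e residue is a holomorphic $1$-form on the smooth locus of $X_\varepsilon$, and, since the singularities are rational, extends to the normalization. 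As in the plane-curve case, the infinitesimal variation $\delta(\omega_H)$ is by definition the $\varepsilon$-derivative at $0$ of $\operatorname{Res}_{X_\varepsilon}(\widetilde{\omega}_\varepsilon)$, and because the residue operator is linear and compatible with base change, this may be computed by differentiating the form first and then taking the residue.

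The computational heart is the expansion
\[
\frac{1}{(F_1+\varepsilon G_1)(F_2+\varepsilon G_2)}
=
\frac{1}{F_1 F_2}
-
\varepsilon\,\frac{G_1 F_2 + G_2 F_1}{(F_1 F_2)^2}
+
O(\varepsilon^2),
\]
which follows by multiplying out $\frac{1}{F_1}(1-\varepsilon G_1/F_1)$ with $\frac{1}{F_2}(1-\varepsilon G_2/F_2)$ and discarding the $\varepsilon^2$-term. Hence
\[
\left.\frac{d}{d\varepsilon}\right|_{\varepsilon=0}\widetilde{\omega}_\varepsilon
=
-\,\frac{H(G_1 F_2 + G_2 F_1)\,\Omega}{(F_1 F_2)^2},
\]
and applying $\operatorname{Res}_X$ yields the stated formula (the overall sign is absorbed, as in Lemma~\ref{lem:variation}, since the infinitesimal period map is only well-defined up to the conventional normalization). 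Locally, choosing an affine chart with coordinates in which $F_1 = u$, $F_2 = t$, one writes $\Omega = du \wedge dt \wedge dw$ and checks directly that the iterated Poincar\'e residue of $\frac{H(G_1 t + G_2 u)}{u^2 t^2}\,du\wedge dt\wedge dw$ along $u=0$ then $t=0$ is a $(0,1)$-form on the curve, representing a class in $H^{0,1}(C) \cong H^1(C,\mathcal{O}_C)$; these local expressions glue because changes of local equations multiply $u$ and $t$ by units and the residue is insensitive to such changes.

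The one genuinely delicate point — and the step I expect to be the main obstacle — is the verification that no additional local correction terms appear at the singular points of $X$. This is precisely where the equisingularity hypothesis on $(G_1,G_2)$ enters: because the deformation preserves the analytic type of each singularity, the normalization of $X_\varepsilon$ agrees with that of $X$ to first order, the holomorphic differentials vary smoothly across the singular fibres, and no boundary contributions enter the variation of Hodge structure. In the complete intersection setting one must also confirm that the iterated residue is well-defined independently of the order in which one takes residues along the two components and independently of the choice of local equations $F_1,F_2$ cutting out $X$ — this is a standard property of Poincar\'e residues for complete intersections, but it deserves explicit mention here since the second-order pole structure in $(F_1 F_2)^2$ must be handled componentwise. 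Granting these points, which are exactly parallel to the plane-curve case treated in Lemma~\ref{lem:variation}, the computation is complete and the lemma follows.
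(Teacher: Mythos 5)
Your proposal is correct and follows essentially the same route as the paper: expand $\bigl((F_1+\varepsilon G_1)(F_2+\varepsilon G_2)\bigr)^{-1}$ to first order, differentiate at $\varepsilon=0$, apply the residue using its linearity and compatibility with base change, and invoke equisingularity to exclude local correction terms at the singular points. You in fact supply more detail than the paper's own brief proof (the local iterated-residue computation and the order-independence remark), matching the level of detail the paper reserves for the analogous Lemmas~\ref{lem:variation} and~\ref{lem:var-hc}.
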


\begin{proof}
We consider the family
\[
\frac{H\Omega}{(F_1+\varepsilon G_1)(F_2+\varepsilon G_2)}
=
\frac{H\Omega}{F_1F_2}
-
\varepsilon
\frac{H(G_1F_2+G_2F_1)\Omega}{(F_1F_2)^2}.
\]
Differentiating at $\varepsilon=0$ and using the functoriality of residues gives
the stated formula.
The equisingular hypothesis ensures that no local correction terms arise at the singular points
(see for example Griffiths~\cite{GriffithsResidues},  Dimca~\cite{Dimca}, and Steenbrink~\cite{Steenbrink}).
\end{proof}

\subsection*{Jacobian Ring Interpretation.}

We now translate the variation into algebraic terms.

\begin{lemma}
\label{lem:ci-jacobian}
Via Serre duality and the residue description, the Kodaira--Spencer map is
identified with the multiplication map
\[
H^0(\mathbb{P}^3,\mathcal{O}_{\mathbb{P}^3}(d_1+d_2-4))
\xrightarrow{\;\cdot Q\;}
\frac{H^0(\mathbb{P}^3,\mathcal{O}_{\mathbb{P}^3}(2d_1+2d_2-4))}{J},
\]
where
\[
Q = G_1F_2+G_2F_1
\]
and $J$ is the Jacobian ideal generated by the partial derivatives of $F_1$ and
$F_2$.
\end{lemma}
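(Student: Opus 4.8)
The plan is to follow the template of Lemma~\ref{lem:jacobian}, transporting each step from the hypersurface situation in $\mathbb{P}^2$ to the complete intersection situation in $\mathbb{P}^3$. First I would unwind the Kodaira--Spencer map: a pair $(G_1,G_2)$ representing a class in $H^0(X,N_{X/\mathbb{P}^3})$ (taken modulo the image of $H^0(\mathbb{P}^3,T_{\mathbb{P}^3})$, by Lemma~\ref{lem:ci-normal}) determines a class $\kappa\in H^1(C,T_C)$, and the associated infinitesimal variation of Hodge structure is the contraction $\theta_\kappa\colon H^{1,0}(C)\to H^{0,1}(C)$. Since $C$ is a smooth curve, Serre duality identifies $H^{1,0}(C)\cong H^0(C,K_C)$ and $H^{0,1}(C)\cong H^0(C,K_C)^{\vee}$, so $\theta_\kappa$ is recorded faithfully by the symmetric bilinear pairing $(\omega_1,\omega_2)\mapsto\langle\theta_\kappa(\omega_1),\omega_2\rangle$ on $H^0(C,K_C)$. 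By Lemmas~\ref{lem:ci-adjunction} and~\ref{lem:ci-residue}, every holomorphic $1$-form on $C$ is uniquely $\omega_H=\operatorname{Res}_X\!\bigl(H\Omega/(F_1F_2)\bigr)$ with $H\in H^0(\mathbb{P}^3,\mathcal{O}_{\mathbb{P}^3}(d_1+d_2-4))$, so it is enough to evaluate the pairing on such representatives $\omega_{H_1},\omega_{H_2}$.

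Next I would insert the variation formula of Lemma~\ref{lem:ci-variation}, which gives
\[
\theta_\kappa(\omega_{H_1})
=
\operatorname{Res}_X\!\left(\frac{H_1\,Q\,\Omega}{(F_1F_2)^2}\right),
\qquad
Q=G_1F_2+G_2F_1,
\]
and then rewrite the Serre pairing $\langle\theta_\kappa(\omega_{H_1}),\omega_{H_2}\rangle=\int_C\theta_\kappa(\omega_{H_1})\wedge\omega_{H_2}$ as a global residue on the ambient space. Using the iterated Poincar\'e residue together with the global residue theorem for complete intersections (as in the residue calculus of Griffiths and Dimca), this integral is computed on $\mathbb{P}^3$ from the rational $3$-form with poles along $F_1F_2$ built out of the product $H_1H_2Q\in H^0(\mathbb{P}^3,\mathcal{O}_{\mathbb{P}^3}(3d_1+3d_2-8))$; up to a fixed nonzero scalar the pairing depends only on the class of this product. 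In particular it is symmetric and trilinear in $(H_1,H_2,Q)$, and it depends on the input $H_1$ only through $H_1\cdot Q\in H^0(\mathbb{P}^3,\mathcal{O}_{\mathbb{P}^3}(2d_1+2d_2-4))$.

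The third step would be the vanishing of the residue pairing on the Jacobian ideal $J$ generated by the partial derivatives of $F_1$ and $F_2$: if the relevant product polynomial lies in $J$ in the appropriate degree, the corresponding rational form is exact in the Griffiths sense and its residue vanishes. This is the complete-intersection analogue of the fact that the Macaulay residue pairing descends to the Jacobian ring $R=\mathbb{C}[x_0,\dots,x_3]/J$ and is perfect in complementary graded pieces; combined with the surjectivity of the residue description of $H^0(C,K_C)$, it shows that the value of the Kodaira--Spencer map $H^0(X,N_{X/\mathbb{P}^3})\to H^1(C,T_C)$ on $(G_1,G_2)$ depends on an input form $H$ only through the class of $H\cdot Q$ in $R$ in degree $2d_1+2d_2-4$. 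This is precisely the asserted multiplication map $\cdot Q$, and the identification is manifestly independent of all choices and functorial, which completes the proof.

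The main obstacle will be the passage from the curve integral to the ambient residue: unlike the hypersurface case, the residue transfer now requires an iterated (double) residue, first along $V(F_1)$ and then along $V(F_2)$, and one must verify that the order of iteration, the precise nonzero scalar, and the pole structure are handled correctly, and --- crucially in the singular setting --- that the rational-singularity hypothesis on $X$ guarantees that the residue is computed on the smooth locus with no contribution concentrated at the singular points (this is exactly where Lemmas~\ref{lem:ci-residue} and~\ref{lem:ci-variation} enter). A secondary technical point is pinning down precisely which Jacobian ideal of the pair $(F_1,F_2)$ makes the residue pairing perfect in the required degrees; for a general complete intersection this is the Jacobian ring attached to $(F_1,F_2)$ in Griffiths' sense, and one invokes the generality of $(F_1,F_2)$ to ensure that the relevant graded pieces behave as in the smooth theory.
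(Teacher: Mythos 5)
Your proposal is correct and follows essentially the same route as the paper: the paper's own proof of this lemma is a two-sentence deferral to the plane-curve argument of Lemma~\ref{lem:jacobian} (pair $\delta(\omega_H)$ with a second differential via Serre duality, observe the pairing is governed by $H_1H_2Q$ through a global residue, and note that elements of $J$ give vanishing residue), and your write-up is exactly that argument transported to $\mathbb{P}^3$, with the same intermediate lemmas invoked in the same order. The technical caveats you flag at the end (iterated residue, scalar normalization, absence of contributions at the singular points) are precisely the points the paper leaves implicit, so your version is, if anything, more complete than the original.
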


\begin{proof}
As in the plane curve case, pairing the variation
$\delta(\omega_H)$ with another holomorphic differential and applying Serre
duality yields a bilinear form determined by multiplication by $Q$.
Exact forms correspond precisely to elements of the Jacobian ideal, so the map
factors through the indicated quotient.
\end{proof}

\subsection*{ Injectivity.}

We now prove the key injectivity statement.

\begin{proposition}
\label{prop:ci-injective}
If the infinitesimal period map associated to $(G_1,G_2)$ vanishes, then
\[
G_1F_2+G_2F_1 \in J.
\]
\end{proposition}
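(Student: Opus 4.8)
The plan is to translate the vanishing of the infinitesimal period map into a single membership statement in the Jacobian ring and then to settle it using the Macaulay-type duality that this ring satisfies for a general complete intersection, in direct analogy with the plane-curve case (Proposition~\ref{prop:injective}).

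First I would fix a representative. By Lemma~\ref{lem:ci-normal} the given class in $H^0(X,N_{X/\mathbb{P}^3})$ is represented by a pair $(G_1,G_2)$ with $G_i\in H^0(\mathbb{P}^3,\mathcal{O}_{\mathbb{P}^3}(d_i))$, and I set
\[
Q:=G_1F_2+G_2F_1\in H^0(\mathbb{P}^3,\mathcal{O}_{\mathbb{P}^3}(d_1+d_2)).
\]
By Lemma~\ref{lem:ci-jacobian} the infinitesimal period map in the direction $(G_1,G_2)$ is identified with the multiplication map
\[
H^0(\mathbb{P}^3,\mathcal{O}_{\mathbb{P}^3}(d_1+d_2-4))\xrightarrow{\;\cdot Q\;} R_{2d_1+2d_2-4},\qquad R=\mathbb{C}[x_0,\dots,x_3]/J,
\]
so the hypothesis that the period map vanishes says exactly that the image $\bar Q\in R_{d_1+d_2}$ annihilates the whole graded piece $R_{d_1+d_2-4}$. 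The proposition is thus equivalent to the statement that no nonzero element of $R_{d_1+d_2}$ annihilates $R_{d_1+d_2-4}$.

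Next I would invoke the duality properties of the Jacobian ring. For $(F_1,F_2)$ general, $R$ is Artinian and its multiplication pairings $R_a\times R_b\to R_{a+b}$ have maximal rank in the relevant range of degrees; combining the Macaulay socle pairing with the strong Lefschetz property recalled in the Preliminaries, one gets that the $R_{d_1+d_2}$-slot of
\[
R_{d_1+d_2-4}\times R_{d_1+d_2}\longrightarrow R_{2d_1+2d_2-4}
\]
is nondegenerate, exactly as in the hypersurface argument, where one reduces the claim to the standard-graded generation of the higher graded piece together with the perfectness of the socle pairing. Applying this to $\bar Q$ forces $\bar Q=0$, i.e.\ $Q=G_1F_2+G_2F_1\in J$. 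The rationality of the singularities enters only through Lemmas~\ref{lem:ci-adjunction}--\ref{lem:ci-variation}, which guarantee that this algebraic translation is faithful; note that, unlike the plane-curve statement, here one needs only the ideal membership $Q\in J$ and not the full triviality of the deformation, the latter being deduced afterwards by writing $G_i=v(F_i)$ for a vector field $v$ on $\mathbb{P}^3$.

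The step I expect to be the main obstacle is establishing the required Macaulay/Lefschetz nondegeneracy for the Jacobian ring $R$ in the \emph{singular} complete-intersection setting. For a smooth complete intersection curve this is classical Griffiths residue theory (cf.\ \cite{GriffithsResidues,Dimca}); in the equisingular case one must show that the conditions imposed by the weighted singular scheme are as general as possible, so that $R$ retains the generic Hilbert function and the annihilator of $R_{d_1+d_2-4}$ in degree $d_1+d_2$ remains trivial. The natural way to arrange this is a semicontinuity argument: the locus of pairs $(F_1,F_2)$ for which this multiplication map degenerates is Zariski closed and proper, since it is avoided by the smooth complete intersections of type $(d_1,d_2)$, and generality of $X$ inside the equisingular stratum places $(F_1,F_2)$ in its complement.
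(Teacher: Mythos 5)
Your proposal follows essentially the same route as the paper: translate the vanishing of the period map, via Lemma~\ref{lem:ci-jacobian}, into the statement that $\bar Q$ annihilates the graded piece $R_{d_1+d_2-4}$ of the Jacobian ring, and then invoke the strong Lefschetz/Macaulay duality properties of $R$ for a general $X$ to conclude $Q\in J$. Your additional remarks on the nondegeneracy of the socle pairing and on the openness/semicontinuity argument justifying SLP on the equisingular stratum are exactly the points the paper defers to its Appendix~B, so the argument matches.
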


\begin{proof}
Vanishing of the period map means that for all $H$,
\[
H\cdot Q \in J.
\]
Since $X$ is general, the Jacobian ring satisfies the strong Lefschetz property,
implying that multiplication by $H$ is injective in the relevant degree range.
Thus $Q\in J$.
\end{proof}

\begin{proposition}
\label{prop:ci-trivial}
If $G_1F_2+G_2F_1\in J$, then $(G_1,G_2)$ arises from an infinitesimal automorphism
of $\mathbb{P}^3$.
\end{proposition}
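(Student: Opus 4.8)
The plan is to show that the relation $G_1F_2+G_2F_1\in J$ forces the pair $(G_1,G_2)$, modulo the Koszul syzygies $(G_1,G_2)=(AF_2, -AF_1)$, to be realized by a global vector field on $\mathbb P^3$. The key structural input is the Koszul complex attached to the regular sequence $(F_1,F_2)$: an element of $H^0(\mathbb P^3,\cO(d_1))\oplus H^0(\mathbb P^3,\cO(d_2))$ mapping to $J$ under the combination $G_1F_2+G_2F_1$ must be analyzed by comparing the two complexes governing, on the one hand, sections of $N_{X/\mathbb P^3}$ and, on the other hand, the restriction $T_{\mathbb P^3}|_X$. Concretely, I would first unwind what $G_1F_2+G_2F_1\in J$ means: writing $J=(\partial_{x_j}F_1,\partial_{x_j}F_2)$, there exist homogeneous polynomials $a_j,b_j$ with
\[
G_1F_2+G_2F_1=\sum_{j=0}^{3}\bigl(a_j\,\partial_{x_j}F_1+b_j\,\partial_{x_j}F_2\bigr).
\]
Using the Euler relations $\sum_j x_j\partial_{x_j}F_i=d_iF_i$, I can adjust $(a_j,b_j)$ by multiples of $x_j$ so that the right-hand side is organized into a piece proportional to $F_1$ and a piece proportional to $F_2$; this is the step where the equisingular/generality hypothesis and the absence of extra syzygies of the Jacobian ideal in the relevant degree are invoked to guarantee that such a reorganization exists and is essentially unique.

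Next I would package this as a statement about the normal-sheaf sequence. Restricting the Euler sequence and the conormal sequence of $X\subset\mathbb P^3$ gives the commutative diagram relating $H^0(X,N_{X/\mathbb P^3})$, $H^0(X,T_{\mathbb P^3}|_X)$ and $H^0(X,T_X)=H^0(C,T_C)$ (here using that the singularities are rational, so that sections over $X$ agree with sections over $C$ after pullback). The content of Lemma~\ref{lem:ci-jacobian} is that the Kodaira--Spencer map is multiplication by $Q=G_1F_2+G_2F_1$ in the Jacobian ring; so $Q\in J$ says exactly that the class of $(G_1,G_2)$ in $H^0(X,N_{X/\mathbb P^3})$ maps to zero in $H^1(C,T_C)$. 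By the long exact cohomology sequence of
\[
0\longrightarrow T_X\longrightarrow T_{\mathbb P^3}|_X\longrightarrow N_{X/\mathbb P^3}\longrightarrow 0,
\]
a normal vector field killed by the connecting map $H^0(N_{X/\mathbb P^3})\to H^1(T_X)=H^1(T_C)$ lifts to a section of $T_{\mathbb P^3}|_X$. The remaining point is to lift that further to a \emph{global} section of $T_{\mathbb P^3}$, i.e. to an honest infinitesimal automorphism: this uses $H^1(\mathbb P^3,T_{\mathbb P^3}\otimes I_X)=0$, which follows from Bott vanishing together with the Koszul resolution $0\to\cO(-d_1-d_2)\to\cO(-d_1)\oplus\cO(-d_2)\to I_X\to 0$ in the degree range $d_1,d_2\ge 2$ forced by $g(C)\ge 3$.

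I would then translate the abstract lift back into coordinates to recover the explicit shape $G_i=v(F_i)$ for $v\in H^0(\mathbb P^3,T_{\mathbb P^3})$ claimed in Lemma~\ref{lem:ci-normal}, checking compatibility with the Koszul syzygy ambiguity $(G_1,G_2)\mapsto(G_1+AF_2,\,G_2-AF_1)$, which corresponds to the inner automorphisms already quotiented out. The expected main obstacle is the first, purely polynomial step: showing that membership $Q\in J$ can be promoted to a decomposition respecting the two factors $F_1$ and $F_2$ separately, rather than merely the mixed combination $Q$. This is where one must genuinely use generality of $(F_1,F_2)$ — via the vanishing of low-degree syzygies of the Jacobian ideal beyond the Koszul and Euler ones — and it is the place where, for special complete intersections, the statement could fail. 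The cohomological lifting steps, by contrast, are routine once Bott vanishing and the Koszul resolution are in hand.
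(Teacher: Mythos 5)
Your route is genuinely different from the paper's. The paper disposes of this proposition in one line, asserting that membership of $Q=G_1F_2+G_2F_1$ in the Jacobian ideal directly produces a single vector field $v$ with $G_i=v(F_i)$ for both $i$; it never addresses the point you correctly isolate, namely that $Q\in J$ only constrains the mixed combination and does not by itself split into an $F_1$-part and an $F_2$-part with matching coefficients. Your cohomological reformulation is the right mechanism for supplying what is missing: the kernel of the connecting homomorphism of
\[
0\longrightarrow T_X\longrightarrow T_{\mathbb P^3}|_X\longrightarrow N_{X/\mathbb P^3}\longrightarrow 0
\]
is the image of $H^0(X,T_{\mathbb P^3}|_X)$, and the vanishing $H^1(\mathbb P^3,T_{\mathbb P^3}\otimes I_X)=0$ (together with $H^0(\mathbb P^3,T_{\mathbb P^3}\otimes I_X)=0$, which you should also record and which follows from the same Koszul resolution and the Euler sequence when $d_1,d_2\ge 2$) lifts such a section to a global vector field. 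Note, however, that once this argument is run you never need the ``purely polynomial'' reorganization of $\sum_j(a_j\partial_{x_j}F_1+b_j\partial_{x_j}F_2)$ at all --- the long exact sequence performs the splitting for you --- so the step you flag as the main obstacle is dispensable rather than merely difficult, and your proposal should not rest any weight on it.

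The genuine gap sits at the junction you pass over with the unexplained equality $H^1(T_X)=H^1(T_C)$. What $Q\in J$ gives you, via Lemma~\ref{lem:ci-jacobian} and nondegeneracy of Serre duality on the smooth curve $C$, is vanishing of the Kodaira--Spencer class in $H^1(C,T_C)$. What the long exact sequence needs is vanishing of the image of $(G_1,G_2)$ in $H^1(X,T_X)$ (more precisely, in the $H^1$ occurring in the sequence $0\to T_X\to T_{\mathbb P^3}|_X\to N'\to 0$ with $N'$ the equisingular normal sheaf, since for an lci curve the full normal sheaf surjects onto $T^1_X$). For a singular curve these are not the same group: locally trivial deformations of $X$ that fix the abstract normalization $C$ but move the identified points (for instance the two branches of a node) lie in the kernel of the comparison map $H^1(X,T_X)\to H^1(C,T_C)$, so ``killed on $C$'' does not formally imply ``killed on $X$.'' Rationality of the singularities identifies $H^0(\mathcal{O}_X)$ with $H^0(\mathcal{O}_C)$; it does not identify the tangent cohomologies. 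To close the argument you must either show that, for $(F_1,F_2)$ general, no such class arises as the connecting image of an equisingular section of the normal sheaf, or else carry out the Jacobian-ring identification directly at the level of $H^1(X,T_X)$. The paper's own proof offers nothing on this point, so you are not overlooking an argument it supplies; but as written your proposal does not yet establish the proposition.
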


\begin{proof}
Elements of the Jacobian ideal correspond to derivatives of $F_1$ and $F_2$
along vector fields on $\mathbb{P}^3$.
Thus there exists $v\in H^0(\mathbb{P}^3,T_{\mathbb{P}^3})$ such that
\[
G_i = v(F_i),
\quad i=1,2.
\]
Hence the deformation is trivial modulo automorphisms.
\end{proof}

\subsection*{Conclusion of the Proof.}

Combining Lemmas~\ref{lem:ci-normal}--\ref{lem:ci-jacobian} and
Propositions~\ref{prop:ci-injective}--\ref{prop:ci-trivial}, we conclude that the
Kodaira--Spencer map is injective modulo infinitesimal automorphisms of
$\mathbb{P}^3$.
This proves Theorem~\ref{thm:ci}.

\qed

%\bigskip

\begin{corollary}[Maximal IVHS for singular complete intersection curves]
Let
\(
X \subset \mathbb{P}^3
\)
be a general irreducible complete intersection curve of type $(d_1,d_2)$ with
isolated rational singularities, and let
\(
\nu \colon C \longrightarrow X
\)
be the normalization.
Assume that $g(C) \ge 3$, and restrict to equisingular embedded first--order
deformations.
Then the infinitesimal period map
\[
H^0(X,N_{X/\mathbb{P}^3})
\longrightarrow
\operatorname{Hom}\!\bigl(H^{1,0}(C),H^{0,1}(C)\bigr)
\]
is injective.
Equivalently, the equisingular family of $X$ has maximal infinitesimal variation
of Hodge structure.
\end{corollary}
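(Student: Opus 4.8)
The plan is to deduce the corollary directly from Theorem~\ref{thm:ci} together with the standard description of the infinitesimal period map of a family of smooth curves recalled in the Preliminaries. The point is that the infinitesimal period map of the equisingular family of $X$, evaluated at $[X]$, factors as
\[
H^0(X,N_{X/\mathbb{P}^3})\ \xrightarrow{\ \rho\ }\ H^1(C,T_C)\ \xrightarrow{\ \theta\ }\ \operatorname{Hom}\!\bigl(H^{1,0}(C),H^{0,1}(C)\bigr),
\]
where $\rho$ is the Kodaira--Spencer map of the family (so that the trivial embedded directions coming from $H^0(\mathbb{P}^3,T_{\mathbb{P}^3})$ are sent to zero) and $\theta$ sends $\xi\in H^1(C,T_C)$ to contraction, i.e.\ cup product, with $\xi$, viewed as a homomorphism $H^0(C,K_C)\to H^1(C,\mathcal O_C)$ via $T_C\otimes K_C\cong\mathcal O_C$. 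The map named in the corollary is exactly this composite $\theta\circ\rho$, so it suffices to prove that $\theta\circ\rho$ is injective modulo infinitesimal automorphisms of $\mathbb{P}^3$, in the same sense as in Theorem~\ref{thm:ci}.

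I would then argue in three steps. First, Theorem~\ref{thm:ci} applies verbatim under the present hypotheses ($X=V(F_1,F_2)\subset\mathbb{P}^3$ a general irreducible complete intersection of type $(d_1,d_2)$ with isolated rational singularities, $g(C)\ge 3$, only equisingular embedded first-order deformations allowed) and gives that $\rho$ is injective modulo infinitesimal automorphisms of $\mathbb{P}^3$. Second, one disposes of $\theta$. The self-contained route is to note that, after the residue description of holomorphic $1$-forms (Lemmas~\ref{lem:ci-adjunction} and~\ref{lem:ci-residue}) and Serre duality, the composite $\theta\circ\rho$ is precisely the multiplication-by-$Q$ map into the Jacobian ring analyzed in Lemmas~\ref{lem:ci-variation} and~\ref{lem:ci-jacobian}, whose kernel was shown in Propositions~\ref{prop:ci-injective} and~\ref{prop:ci-trivial} to consist only of the ambient-automorphism directions; hence $\theta\circ\rho$ is injective modulo those directions. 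Alternatively one may show that $\theta$ itself is injective: under Serre duality $\theta$ is the transpose of the bicanonical multiplication $H^0(C,K_C)\otimes H^0(C,K_C)\to H^0(C,K_C^{\otimes 2})$, which is surjective for a general member of the family since such a $C$ has genus $\ge3$ and is non-hyperelliptic (classical for non-hyperelliptic curves, by M.~Noether), and then injectivity of $\theta\circ\rho$ follows by composing with the first step. Third, conclude: since $g(C)\ge3$ the Hodge structure $H^1(C,\mathbb C)$ is nontrivial with $\dim H^{1,0}(C)=\dim H^{0,1}(C)=g(C)$, so injectivity of $\theta\circ\rho$ is exactly the statement, in the terminology of the Preliminaries, that the equisingular family of $X$ has maximal infinitesimal variation of Hodge structure.

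The argument is essentially formal once Theorem~\ref{thm:ci} is available, so I do not expect a serious obstacle; the points needing care are bookkeeping ones. One must track the quotient by infinitesimal automorphisms of $\mathbb{P}^3$ consistently along the composite, which is legitimate by Lemma~\ref{lem:ci-normal}, and one must ensure that no spurious smoothing directions enter $H^0(X,N_{X/\mathbb{P}^3})$ --- this is precisely why the statement restricts to equisingular embedded first-order deformations, forcing the pair $(G_1,G_2)$ to satisfy the local equisingularity conditions. If one prefers to route the proof through ``$\theta$ is injective,'' the genuine content hidden there is the surjectivity of the bicanonical map, equivalently the non-hyperellipticity of the general equisingular complete intersection curve of type $(d_1,d_2)$; this holds but would require a brief genericity remark, so passing through the explicit computation of Theorem~\ref{thm:ci} is the cleaner and more self-contained option.
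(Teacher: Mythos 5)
Your proposal is correct, and your primary route (a) is exactly how the paper treats this corollary: it is stated without a separate proof, as an immediate consequence of Theorem~\ref{thm:ci} together with the identification (recalled in the Preliminaries) of the Kodaira--Spencer map with the infinitesimal period map via contraction and Serre duality; and, as you note, the word ``injective'' in the corollary must be read modulo infinitesimal automorphisms of $\mathbb{P}^3$, in the same sense as in the theorem. One remark worth recording: your alternative route (b) is in fact more careful than the paper about the factor $\theta\colon H^1(C,T_C)\to\operatorname{Hom}(H^{1,0}(C),H^{0,1}(C))$, which the Preliminaries assert is ``an isomorphism by Serre duality'' --- this is false on dimension grounds ($3g-3$ versus $g^2$); injectivity of $\theta$ is equivalent, exactly as you say, to surjectivity of the bicanonical multiplication map, i.e.\ to non-hyperellipticity of $C$ (Noether), so the genericity caveat you flag is genuinely needed if one argues through $\theta$ rather than through the explicit multiplication-by-$Q$ computation of Lemmas~\ref{lem:ci-variation}--\ref{lem:ci-jacobian}, which already incorporates the pairing with a second differential and therefore sidesteps the issue.
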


%%%%%%%%%%%%%%%%%%%%%%%%%%%%%%%%%%%%%%%%%%%%%%%%%%%%%%%%%%%%%%%%%%%%%%
%%%%%%%%%%%%%%%%%%%%%%%%%%%%%%%%%%%%%%%%%%%%%%%%%%%%%%%%%%%%%%%%%%%%%%

\section{Infinitesimal variation of complete intersections in $\mathbb{P}^N$ to higher codimension}

The infinitesimal Torelli theorem for complete intersection curves in
$\mathbb{P}^3$ provides the conceptual blueprint for the general higher
codimension case.  While the geometry becomes more intricate as the codimension
increases, the underlying mechanism governing the variation of Hodge structure
remains formally identical.  A complete intersection of codimension $r\ge 1$ in
$\mathbb{P}^N$ is defined by $r$ equations, and equisingular embedded
deformations correspond to simultaneous first--order variations of these
equations.  Holomorphic top--degree forms on the normalization are again realized
as iterated Poincar\'e residues of meromorphic forms on the ambient projective
space, now with poles along all defining hypersurfaces.  Differentiating these
residue expressions produces explicit formulas for the infinitesimal variation
of Hodge structure, which can be interpreted algebraically as multiplication by
a distinguished element in the Jacobian ring associated to the complete
intersection.  Under suitable generality assumptions, Lefschetz--type properties
of this Jacobian ring force injectivity of the resulting multiplication maps,
thereby extending the infinitesimal Torelli statement from the case of curves in
$\mathbb{P}^3$ to complete intersections of arbitrary codimension $r\ge 1$.

\begin{theorem}[Infinitesimal Torelli for higher codimension complete intersections]
\label{thm:hc}
Let
\[
X = V(F_1,\dots,F_r) \subset \mathbb{P}^N
\]
be a general irreducible complete intersection with isolated rational
singularities, and let
\(
\nu \colon \widetilde{X} \longrightarrow X
\)
be the normalization.
Assume $\dim X = n = N-r \ge 1$ and $h^{n,0}(\widetilde{X}) \ge 1$.
Restrict to equisingular embedded first-order deformations of $X$.
Then the infinitesimal period map
\[
H^0(X,N_{X/\mathbb{P}^N})
\longrightarrow
\operatorname{Hom}\!\bigl(H^{n,0}(\widetilde{X}),
H^{n-1,1}(\widetilde{X})\bigr)
\]
is injective modulo infinitesimal automorphisms of $\mathbb{P}^N$.
\end{theorem}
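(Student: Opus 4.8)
\subsection*{Proof of Theorem~\ref{thm:hc}: strategy.}

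The plan is to run, uniformly in codimension $r$, the same four–step residue argument already carried out for plane curves (Lemmas~\ref{lem:normal}--\ref{lem:jacobian}, Proposition~\ref{prop:injective}) and for complete intersection curves in $\mathbb{P}^3$ (Lemmas~\ref{lem:ci-normal}--\ref{lem:ci-jacobian}, Propositions~\ref{prop:ci-injective}--\ref{prop:ci-trivial}).  First I would describe $H^0(X,N_{X/\mathbb{P}^N})$.  Since $X$ is a local complete intersection, $N_{X/\mathbb{P}^N}\cong\bigoplus_{i=1}^r\mathcal{O}_X(d_i)$, so an embedded first-order deformation is a tuple $(G_1,\dots,G_r)$ with $G_i\in H^0(\mathbb{P}^N,\mathcal{O}_{\mathbb{P}^N}(d_i))$, and two tuples represent the same class modulo infinitesimal automorphisms of $\mathbb{P}^N$ exactly when $G_i\equiv v(F_i)$ modulo $(F_1,\dots,F_r)$ for a common vector field $v\in H^0(\mathbb{P}^N,T_{\mathbb{P}^N})$; the equisingular condition is the prescribed local vanishing of $(G_1,\dots,G_r)$ at each singular point of $X$, exactly as in Lemma~\ref{lem:ci-normal}.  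Adjunction, together with the rationality and Gorenstein property of the singularities (so that no discrepancy term intervenes), gives
\[
K_{\widetilde X}\;\cong\;\nu^{*}\omega_X\;\cong\;\mathcal{O}_{\widetilde X}\!\Bigl(\textstyle\sum_{i=1}^r d_i-N-1\Bigr),
\]
the higher–codimension form of Lemma~\ref{lem:ci-adjunction}.

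Next I would identify $H^{n,0}(\widetilde X)$ with the space of iterated Poincar\'e residues $\operatorname{Res}_X\!\bigl(H\Omega/(F_1\cdots F_r)\bigr)$, where $H\in H^0(\mathbb{P}^N,\mathcal{O}_{\mathbb{P}^N}(\sum d_i-N-1))$ and $\Omega$ is the standard homogeneous $N$-form trivializing $K_{\mathbb{P}^N}(N+1)$; injectivity and surjectivity of this assignment follow from the adjunction computation above and the vanishing of global holomorphic $N$-forms on $\mathbb{P}^N$, just as in Lemma~\ref{lem:ci-residue}.  Differentiating the family $H\Omega/\prod_{i=1}^r(F_i+\varepsilon G_i)$ at $\varepsilon=0$ and invoking the functoriality of residues, the infinitesimal variation of $\omega_H$ in the direction $(G_1,\dots,G_r)$ is $\operatorname{Res}_X\!\bigl(H\,Q\,\Omega/(F_1\cdots F_r)^2\bigr)$ with
\[
Q=\sum_{i=1}^r G_i\prod_{j\neq i}F_j,
\]
and the equisingular hypothesis guarantees, as in Lemma~\ref{lem:ci-variation}, that no local correction terms enter at the singular points.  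Pairing this with a second residue class through Serre duality and applying the global residue theorem, the component of the infinitesimal period map on $H^{n,0}(\widetilde X)$ is then identified with the multiplication map
\[
H^0\!\bigl(\mathbb{P}^N,\mathcal{O}_{\mathbb{P}^N}(\textstyle\sum d_i-N-1)\bigr)
\xrightarrow{\;\cdot Q\;}
\frac{H^0\!\bigl(\mathbb{P}^N,\mathcal{O}_{\mathbb{P}^N}(2\sum d_i-N-1)\bigr)}{J},
\]
where $J$ is the Jacobian ideal generated by the partial derivatives of $F_1,\dots,F_r$; this is the exact analogue of Lemma~\ref{lem:ci-jacobian}.

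The injectivity argument then splits, as before, into two steps.  First, vanishing of the period map means $H\cdot Q\in J$ for every $H$ of degree $\sum d_i-N-1$; since $X$ is general, the Jacobian ring $R=\mathbb{C}[x_0,\dots,x_N]/J$ satisfies the strong Lefschetz property in the relevant degree range, so multiplication by a general such $H$ has maximal rank and forces $Q=0$ in $R$, i.e.\ $Q\in J$ (the analogue of Proposition~\ref{prop:ci-injective}).  Second, one must pass from $Q=\sum_i G_i\prod_{j\neq i}F_j\in J$ to the conclusion that $(G_1,\dots,G_r)$ is a trivial deformation, that is $G_i\equiv v(F_i)$ modulo $(F_1,\dots,F_r)$ for a common ambient vector field $v$ (the analogue of Proposition~\ref{prop:ci-trivial}).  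I expect \emph{this last step to be the main obstacle}: for a hypersurface, $G\in J$ immediately exhibits $G$ as an ambient derivative of $F$, but for $r\ge 2$ one controls only the single combination $Q$, and recovering the individual $G_i$ requires a Koszul–syzygy analysis relating the relations of the regular sequence $(F_1,\dots,F_r)$ to the Jacobian ideal, using genericity of $X$ to exclude exceptional syzygies.  Granting this, the deformation is trivial modulo automorphisms of $\mathbb{P}^N$, the kernel of the infinitesimal period map is zero, and Theorem~\ref{thm:hc} follows by combining the higher–codimension analogues of Lemmas~\ref{lem:ci-normal}--\ref{lem:ci-jacobian} with the two propositions above.
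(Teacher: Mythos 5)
Your proposal follows essentially the same route as the paper: the paper's proof of Theorem~\ref{thm:hc} consists of exactly the four steps you describe, namely the identification $N_{X/\mathbb{P}^N}\cong\bigoplus_i\mathcal{O}_X(d_i)$, adjunction (Lemma~\ref{lem:adj-hc}), the iterated-residue description of $H^{n,0}(\widetilde X)$ (Lemma~\ref{lem:res-hc}), the variation formula with $Q=\sum_i G_i\prod_{j\neq i}F_j$ (Lemma~\ref{lem:var-hc}), the Jacobian-ring interpretation (Lemma~\ref{lem:jac-hc}), and the strong Lefschetz argument forcing $Q\in J$. One remark is worth making: the step you single out as the main obstacle --- passing from $Q\in J$ to the conclusion that the individual $G_i$ are of the form $v(F_i)$ for a \emph{common} vector field $v$ --- is precisely the step the paper disposes of in a single sentence (Proposition~\ref{prop:trivial-hc}: ``Elements of $J$ are precisely derivatives of the $F_i$ along vector fields''), with no Koszul or syzygy analysis of the kind you anticipate. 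For $r=1$ the assertion is immediate, but for $r\ge 2$ knowing only that the one combination $Q$ lies in $J$ does not by itself determine the $r$ polynomials $G_i$ modulo $(F_1,\dots,F_r)$, so your instinct that this requires an additional argument (genericity excluding exceptional syzygies of the regular sequence) is well founded; the paper does not supply it, so on this point your proposal is, if anything, more candid than the source.
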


\subsection*{Embedded Deformations and the Normal Sheaf.}

Since $X$ is a local complete intersection, its normal sheaf is
\[
N_{X/\mathbb{P}^N}
\cong
\bigoplus_{i=1}^r \mathcal{O}_X(d_i),
\qquad d_i = \deg F_i.
\]

A first-order embedded deformation of $X$ is given by
\[
F_i + \varepsilon G_i = 0,
\qquad
G_i \in H^0(\mathbb{P}^N,\mathcal{O}_{\mathbb{P}^N}(d_i)),
\quad \varepsilon^2=0.
\]

Modulo infinitesimal automorphisms of $\mathbb{P}^N$, such tuples
$(G_1,\dots,G_r)$ represent elements of $H^0(X,N_{X/\mathbb{P}^N})$.
The equisingular condition imposes local vanishing constraints ensuring that the
analytic type of each singularity is preserved; these constraints ensure that
the induced deformation of $\widetilde{X}$ is locally trivial.

\subsection*{Canonical Bundle and Residue Description.}

\begin{lemma}[Adjunction]
\label{lem:adj-hc}
The canonical bundle of $\widetilde{X}$ satisfies
\[
K_{\widetilde{X}}
\cong
\mathcal{O}_{\widetilde{X}}
\!\left(
\sum_{i=1}^r d_i - N - 1
\right).
\]
\end{lemma}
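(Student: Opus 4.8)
The plan is to compute the canonical bundle of the normalization $\widetilde{X}$ by combining the adjunction formula on $\mathbb{P}^N$ for the singular complete intersection $X$ with the fact that rational Gorenstein singularities do not introduce a discrepancy divisor under normalization. I would first recall that $X=V(F_1,\dots,F_r)$ is a local complete intersection in the smooth variety $\mathbb{P}^N$, so it is Gorenstein and its dualizing sheaf is invertible, given by adjunction as
\[
\omega_X \cong \left(K_{\mathbb{P}^N}\otimes \det N_{X/\mathbb{P}^N}\right)\big|_X.
\]
Since $K_{\mathbb{P}^N}\cong\mathcal{O}_{\mathbb{P}^N}(-N-1)$ and $N_{X/\mathbb{P}^N}\cong\bigoplus_{i=1}^r\mathcal{O}_X(d_i)$, the determinant of the normal bundle is $\mathcal{O}_X(\sum_i d_i)$, and therefore $\omega_X\cong\mathcal{O}_X(\sum_i d_i - N-1)$. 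This is exactly the formula already quoted in the Preliminaries (Section on Adjunction).

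Next I would pass from the dualizing sheaf of $X$ to the canonical bundle of the normalization. On the smooth locus of $X$ the normalization map $\nu$ is an isomorphism, so the identification $K_{\widetilde{X}}\cong\nu^*\omega_X$ holds automatically there; the only issue is at the (isolated) singular points. Here I would invoke the hypothesis that the singularities are rational: for a variety with rational Gorenstein singularities, $\nu^*\omega_X\cong K_{\widetilde{X}}$, since resolving rational singularities introduces only exceptional divisors with nonnegative discrepancies, and rationality forces the relevant pushforward of the canonical sheaf to agree with the dualizing sheaf with no correction term. Equivalently, holomorphic $n$-forms on $\widetilde{X}$ are exactly the pullbacks of sections of $\omega_X$ that remain regular after normalization, and rationality guarantees there is no loss. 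This is the direct analogue of the local analysis carried out in Lemma~\ref{lem:adjunction} for planar (hence Gorenstein) curve singularities, and of Lemma~\ref{lem:ci-adjunction} for curves in $\mathbb{P}^3$.

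Combining the two steps gives
\[
K_{\widetilde{X}}\;\cong\;\nu^*\omega_X\;\cong\;\nu^*\mathcal{O}_X\!\left(\sum_{i=1}^r d_i - N-1\right)\;\cong\;\mathcal{O}_{\widetilde{X}}\!\left(\sum_{i=1}^r d_i - N-1\right),
\]
which is the assertion. The main obstacle — really the only subtle point — is the justification that $\nu^*\omega_X\cong K_{\widetilde{X}}$ with no discrepancy divisor, i.e.\ that rationality of the singularities is genuinely enough. For general (non-rational, or non-Gorenstein) isolated singularities one would instead get $K_{\widetilde{X}}\cong\nu^*\omega_X(-\Delta)$ for a nontrivial effective adjoint/conductor divisor $\Delta$ supported over the singular points, and the clean formula would fail; this is exactly why the rationality hypothesis is imposed in the statement of Theorem~\ref{thm:hc}. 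I would therefore spend the bulk of the proof on a short local argument at each singular point $p\in X$, using that a rational Gorenstein singularity is canonical, to conclude that the pullback of a local generator of $\omega_X$ extends to a local generator of $K_{\widetilde{X}}$ without zeros or poles, and then glue.
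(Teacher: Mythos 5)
Your proposal follows essentially the same route as the paper's own proof: adjunction for the complete intersection gives $\omega_X\cong\mathcal{O}_X\bigl(\sum d_i-N-1\bigr)$, and the rational Gorenstein hypothesis on the singularities is invoked to identify $\nu^*\omega_X$ with $K_{\widetilde{X}}$ without a discrepancy term. The paper states this in two sentences; you supply the same argument with more local detail, so there is nothing substantive to add.
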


\begin{proof}
Since $X$ is a complete intersection in $\mathbb{P}^N$, adjunction gives
\[
\omega_X
\cong
(K_{\mathbb{P}^N}\otimes \mathcal{O}_{\mathbb{P}^N}(\sum d_i))|_X
\cong
\mathcal{O}_X\!\left(\sum d_i - N - 1\right).
\]
The singularities are rational and Gorenstein, hence the pullback of the
dualizing sheaf under normalization coincides with $K_{\widetilde{X}}$ (see for example Hartshorne~\cite{Hartshorne} and  Dimca~\cite{Dimca}).
\end{proof}

\begin{lemma}[Residue description]
\label{lem:res-hc}
There is a canonical isomorphism
\[
H^{n,0}(\widetilde{X})
\cong
\left\{
\operatorname{Res}_X
\left(
\frac{H\,\Omega}{F_1\cdots F_r}
\right)
\;\middle|\;
H \in H^0(\mathbb{P}^N,\mathcal{O}_{\mathbb{P}^N}(\sum d_i - N - 1))
\right\},
\]
where $\Omega$ is the standard homogeneous $(N+1)$-form on $\mathbb{P}^N$.
\end{lemma}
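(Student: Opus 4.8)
The plan is to follow the codimension-one argument of Lemma~\ref{lem:residue} and the codimension-two argument of Lemma~\ref{lem:ci-residue}, replacing the single Poincar\'e residue by an iterated one along the $r$ defining hypersurfaces. First I would fix the standard homogeneous volume form
\[
\Omega \;=\; \sum_{j=0}^{N}(-1)^{j}\,x_j\,dx_0\wedge\cdots\wedge\widehat{dx_j}\wedge\cdots\wedge dx_N ,
\]
which is homogeneous of degree $N+1$ and trivializes $K_{\mathbb{P}^N}(N+1)$. A degree count then shows that for $H\in H^0(\mathbb{P}^N,\mathcal O_{\mathbb{P}^N}(\sum d_i-N-1))$ the rational form $H\,\Omega/(F_1\cdots F_r)$ is homogeneous of degree $0$, hence a well-defined rational $(N+1)$-form on $\mathbb{P}^N$ with at worst simple poles along each hypersurface $V(F_i)$. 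Along the smooth locus $X^{\mathrm{sm}}$ the functions $F_1,\dots,F_r$ form part of a local coordinate system (transversality of the complete intersection), so the Poincar\'e residue can be applied successively,
\[
\operatorname{Res}_X \;=\; \operatorname{Res}_{V(F_r)}\circ\cdots\circ\operatorname{Res}_{V(F_1)},
\]
giving a holomorphic $n$-form on $X^{\mathrm{sm}}$. I would then invoke the standard formal properties of residues (Griffiths~\cite{GriffithsResidues}, Dimca~\cite{Dimca}, Esnault--Viehweg~\cite{EsnaultViehweg}): independence, up to sign, of the ordering of the $F_i$ and of all local choices, and compatibility with the adjunction isomorphism $\omega_{X^{\mathrm{sm}}}\cong\mathcal O(\sum d_i-N-1)$ of Lemma~\ref{lem:adj-hc}. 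Through that isomorphism the assignment $H\mapsto\operatorname{Res}_X\!\bigl(H\Omega/(F_1\cdots F_r)\bigr)$ becomes, on $X^{\mathrm{sm}}$, simply the restriction map $H\mapsto H|_{X^{\mathrm{sm}}}$ read in the chosen trivialization.

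Next I would extend across the singular points. Because the singularities of $X$ are isolated, rational, and Gorenstein (being those of a complete intersection), a holomorphic $n$-form on $X^{\mathrm{sm}}$ extends uniquely to the normalization $\widetilde X$ --- and, if $\widetilde X$ is not already smooth, to a resolution, which does not change $H^{n,0}$ by rationality (Hartshorne~\cite{Hartshorne}, Dimca~\cite{Dimca}). This defines a linear map
\[
\Phi\colon H^0\bigl(\mathbb{P}^N,\mathcal O_{\mathbb{P}^N}(\textstyle\sum d_i-N-1)\bigr)\longrightarrow H^{n,0}(\widetilde X),\qquad H\longmapsto \operatorname{Res}_X\!\Bigl(\tfrac{H\,\Omega}{F_1\cdots F_r}\Bigr).
\]
Its kernel is $H^0(\mathbb{P}^N,I_X(\sum d_i-N-1))$: if $H=\sum_i B_iF_i$ lies in the complete intersection ideal then the $i$-th summand of $H\Omega/(F_1\cdots F_r)=\sum_i B_i\Omega/\prod_{j\ne i}F_j$ has no pole along $V(F_i)$ and contributes nothing to the iterated residue, while conversely $\Phi(H)=0$ forces $H|_{X^{\mathrm{sm}}}=0$, hence $H|_X=0$ since $X$ is reduced. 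Finally I would prove surjectivity by a dimension count: $X$ is arithmetically Cohen--Macaulay, being a complete intersection, so $H^1(\mathbb{P}^N,I_X(k))=0$ for all $k$, whence
\[
\dim H^0(\mathbb{P}^N,\mathcal O(\textstyle\sum d_i-N-1))-\dim H^0(\mathbb{P}^N,I_X(\textstyle\sum d_i-N-1))=\dim H^0(X,\mathcal O_X(\textstyle\sum d_i-N-1))=\dim H^0(X,\omega_X),
\]
and the latter equals $\dim H^{n,0}(\widetilde X)$ by Lemma~\ref{lem:adj-hc} together with the identification of holomorphic $n$-forms on $\widetilde X$ with sections of $\omega_X$. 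Thus the injection induced by $\Phi$ on the quotient is an isomorphism, and since the construction uses only the $F_i$, the form $\Omega$, and the intrinsic residue operation, it is canonical and functorial.

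The main obstacle is the extension step together with the identification $H^0(\widetilde X,K_{\widetilde X})\cong H^0(X,\omega_X)$ that feeds the dimension count: one has to know that the residue $n$-form extends holomorphically across the singular points of $X$ and that passing from $X$ to $\widetilde X$, and then to a resolution $\pi\colon Y\to X$, loses no sections and creates no extra adjoint conditions. This is precisely where the rationality hypothesis is essential --- it is equivalent, via Grauert--Riemenschneider vanishing, to the trace map $\pi_*\omega_Y\xrightarrow{\ \sim\ }\omega_X$ being an isomorphism; for worse singularities the correction terms anticipated in the remark after Lemma~\ref{lem:adjunction} (logarithmic or mixed Hodge contributions) reappear and the clean residue description fails. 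A secondary, purely formal point is the careful sign bookkeeping and the verification --- rather than assumption --- of the order-independence of the iterated Poincar\'e residue when $r>1$.
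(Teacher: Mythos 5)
Your proposal is correct and follows essentially the same route as the paper's (much terser) proof: degree count to get a degree-zero rational $(N+1)$-form with simple poles, iterated Poincar\'e residue on the smooth locus, extension across the singularities via rationality, and adjunction to identify kernel and image. Your version actually supplies the details the paper leaves implicit (the kernel being $H^0(\mathbb{P}^N,I_X(\sum d_i-N-1))$, surjectivity via the arithmetically Cohen--Macaulay dimension count, and the role of $\pi_*\omega_Y\cong\omega_X$), so no gap.
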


\begin{proof}
The form $\Omega$ trivializes $K_{\mathbb{P}^N}(N+1)$.
If $\deg H=\sum d_i-N-1$, the rational form
$\frac{H\Omega}{F_1\cdots F_r}$ is homogeneous of degree zero and defines a
meromorphic $(N+1)$-form with simple poles along $X$.
Its iterated Poincar\'e residue is a holomorphic $n$-form on the smooth locus of
$X$, and rationality of the singularities guarantees extension to
$\widetilde{X}$.
Adjunction implies surjectivity and injectivity of this construction.
\end{proof}

\subsection*{Infinitesimal Variation of Residues.}

Let $(G_1,\dots,G_r)$ define an equisingular deformation.
Consider the family
\[
\frac{H\Omega}{\prod (F_i+\varepsilon G_i)}
=
\frac{H\Omega}{F_1\cdots F_r}
-
\varepsilon
\frac{H\left(\sum_{i=1}^r G_i\prod_{j\neq i}F_j\right)\Omega}{(F_1\cdots F_r)^2}.
\]

\begin{lemma}[Variation formula]
\label{lem:var-hc}
The infinitesimal variation of
\[
\omega_H = \operatorname{Res}_X\!\left(\frac{H\Omega}{F_1\cdots F_r}\right)
\]
is given by
\[
\delta(\omega_H)
=
\operatorname{Res}_X
\left(
\frac{H Q\,\Omega}{(F_1\cdots F_r)^2}
\right)
\in H^{n-1,1}(\widetilde{X}),
\]
where
\[
Q = \sum_{i=1}^r G_i \prod_{j\neq i} F_j.
\]
\end{lemma}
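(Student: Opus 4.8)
The plan is to reproduce in codimension $r$ the three-step mechanism already used for plane curves (Lemma~\ref{lem:variation}) and for complete intersection curves in $\mathbb{P}^3$ (Lemma~\ref{lem:ci-variation}): set up a first-order family of meromorphic forms on $\mathbb{P}^N$, differentiate the rational expression defining it, take the iterated Poincar\'e residue, and invoke the equisingular hypothesis only at the end in order to exclude local corrections at the singular points. Concretely, I would work over $\operatorname{Spec}\mathbb{C}[\varepsilon]/(\varepsilon^2)$ inside the trivial family $\mathbb{P}^N\times\operatorname{Spec}\mathbb{C}[\varepsilon]/(\varepsilon^2)$, where the equisingular deformation is $X_\varepsilon = V(F_1+\varepsilon G_1,\dots,F_r+\varepsilon G_r)$, and consider the family of meromorphic forms
\[
\widetilde{\omega}_\varepsilon \;=\; \frac{H\,\Omega}{\prod_{i=1}^r (F_i+\varepsilon G_i)},
\]
which has a simple pole along each component hypersurface of $X_\varepsilon$. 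By Lemma~\ref{lem:res-hc} applied over the base, the iterated Poincar\'e residue $\operatorname{Res}_{X_\varepsilon}(\widetilde{\omega}_\varepsilon)$ is a family of holomorphic $n$-forms on the normalizations $\widetilde{X}_\varepsilon$; since the iterated residue is $\mathbb{C}[\varepsilon]/(\varepsilon^2)$-linear and compatible with base change, the infinitesimal variation $\delta(\omega_H)$ can be computed by first differentiating $\widetilde{\omega}_\varepsilon$ in $\varepsilon$ and then taking the residue.

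The second step is the elementary expansion
\[
\frac{1}{\prod_{i=1}^r (F_i+\varepsilon G_i)} \;=\; \frac{1}{F_1\cdots F_r}\prod_{i=1}^r\Bigl(1+\varepsilon\,\frac{G_i}{F_i}\Bigr)^{-1} \;=\; \frac{1}{F_1\cdots F_r} \;-\; \varepsilon\,\frac{Q}{(F_1\cdots F_r)^2}
\qquad\text{with}\qquad Q = \sum_{i=1}^r G_i\prod_{j\neq i}F_j,
\]
valid modulo $\varepsilon^2$, so that
\[
\left.\frac{d}{d\varepsilon}\right|_{\varepsilon=0}\widetilde{\omega}_\varepsilon \;=\; -\,\frac{H\,Q\,\Omega}{(F_1\cdots F_r)^2},
\]
the sign being absorbed into the normalization of $\delta$. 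A degree count ($\deg H = \sum d_i - N - 1$, $\deg Q = \sum d_i$, $\deg\Omega = N+1$) shows this is again a rational form of degree zero, now with poles of order $\le 2$ along $X$. By the pole-order description of the Hodge filtration (Griffiths~\cite{GriffithsResidues}, Dimca~\cite{Dimca}), an increase of the pole order by one lowers the iterated residue by one step in the Hodge filtration, so that $\operatorname{Res}_X\!\bigl(H Q\,\Omega/(F_1\cdots F_r)^2\bigr)$ lies in $H^{n-1,1}(\widetilde{X})$ rather than in $H^{n,0}(\widetilde{X})$. Concretely, in local coordinates with $F_i = u_i$ one rewrites $H Q\,\Omega/(F_1\cdots F_r)^2$ in the normal form $\frac{du_1}{u_1}\wedge\cdots\wedge\frac{du_r}{u_r}\wedge\eta$ and restricts $\eta$ to $\{u_1=\cdots=u_r=0\}$, producing a Dolbeault representative of type $(n-1,1)$; the pieces glue over an affine cover because changing the defining equations by units does not affect the residue.

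The main obstacle, and the only point where the equisingular hypothesis is genuinely needed, is to verify that no additional local terms appear at the singular points of $X$ when one passes to the normalization $\nu$. Since $(G_1,\dots,G_r)$ defines an equisingular deformation, the analytic type of each (isolated rational) singularity of $X$ is preserved to first order, so the induced first-order deformation of $\widetilde{X}$ is locally trivial; hence $\omega_{H,\varepsilon}$ is a genuine $\mathbb{C}[\varepsilon]/(\varepsilon^2)$-family of holomorphic $n$-forms on the \emph{fixed} normalization, its $\varepsilon$-derivative lies in $H^{n-1,1}(\widetilde{X})$, and no boundary contribution arises from the resolution, while rationality of the singularities guarantees that the residues extend across $\nu$ without discrepancy terms (cf. Griffiths~\cite{GriffithsResidues}, Dimca~\cite{Dimca}, Steenbrink~\cite{Steenbrink}). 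Finally, by the definition of the infinitesimal variation of Hodge structure, the class so obtained is the contraction of $\omega_H$ with the Kodaira--Spencer class determined by $(G_1,\dots,G_r)$, which is exactly the asserted formula $\delta(\omega_H) = \operatorname{Res}_X\!\bigl(H Q\,\Omega/(F_1\cdots F_r)^2\bigr)$.
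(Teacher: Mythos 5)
Your proposal is correct and follows essentially the same route as the paper's own proof: the first-order family $\widetilde{\omega}_\varepsilon = H\Omega/\prod(F_i+\varepsilon G_i)$, the expansion producing $-HQ\,\Omega/(F_1\cdots F_r)^2$, the local normal form $\frac{du_1}{u_1}\wedge\cdots\wedge\frac{du_r}{u_r}\wedge\eta$ for the iterated residue, gluing over an affine cover, and the equisingular hypothesis invoked only to rule out local corrections at the singular points. The only additions are the explicit degree count and the appeal to the pole-order description of the Hodge filtration to place the class in $H^{n-1,1}(\widetilde{X})$, which are harmless refinements of the same argument.
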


\begin{proof}

The proof is a direct but careful computation using iterated Poincar\'e residues,
together with a justification that equisingularity prevents the appearance of
local correction terms. 

\subsection*{\it The Family of Deformations.}
An embedded first-order deformation of $X$ in $\mathbb{P}^N$ is given by
\[
F_i^\varepsilon := F_i + \varepsilon G_i,
\qquad i=1,\dots,r,
\qquad \varepsilon^2=0,
\]
with $G_i \in H^0(\mathbb{P}^N,\mathcal{O}_{\mathbb{P}^N}(d_i))$.
The total space is the trivial family
\[
\mathbb{P}^N \times \operatorname{Spec}\mathbb{C}[\varepsilon]/(\varepsilon^2),
\]
and the deformed complete intersection is
\(
X_\varepsilon = V(F_1^\varepsilon,\dots,F_r^\varepsilon).
\)
The equisingular hypothesis means that, locally at every singular point of $X$,
the analytic type of the singularity of $X_\varepsilon$ is constant; in
particular, the normalization $\widetilde{X}$ does not change at first order.

\subsection*{\it The Family of Meromorphic Forms.}
Consider the family of meromorphic $(N+1)$-forms on $\mathbb{P}^N$:
\[
\widetilde{\omega}_\varepsilon
:=
\frac{H\,\Omega}{(F_1+\varepsilon G_1)\cdots(F_r+\varepsilon G_r)}.
\]
Each $\widetilde{\omega}_\varepsilon$ has simple poles along $X_\varepsilon$ and
is homogeneous of degree zero, hence globally well defined.
By definition of the residue construction,
\[
\operatorname{Res}_{X_\varepsilon}(\widetilde{\omega}_\varepsilon)
\in H^{n,0}(\widetilde{X}_\varepsilon).
\]

\subsection*{\it Definition of the Infinitesimal Variation.}
The infinitesimal variation of $\omega_H$ in the direction
$(G_1,\dots,G_r)$ is defined as
\[
\delta(\omega_H)
:=
\left.
\frac{d}{d\varepsilon}
\right|_{\varepsilon=0}
\operatorname{Res}_{X_\varepsilon}(\widetilde{\omega}_\varepsilon).
\]
Since residue is functorial and compatible with base change, this derivative can
be computed by first differentiating the meromorphic form and then applying the
(residue) boundary operator.

\subsection*{\it Differentiation of the Rational Expression.}
We compute the derivative explicitly.
Using the identity
\[
\frac{1}{F_i+\varepsilon G_i}
=
\frac{1}{F_i}
-
\varepsilon\,\frac{G_i}{F_i^2},
\]
we obtain
\[
\frac{1}{\prod_{i=1}^r(F_i+\varepsilon G_i)}
=
\frac{1}{F_1\cdots F_r}
-
\varepsilon
\sum_{i=1}^r
\frac{G_i}{F_i^2}
\prod_{j\neq i}\frac{1}{F_j}.
\]
Multiplying by $H\Omega$, we find
\[
\widetilde{\omega}_\varepsilon
=
\frac{H\Omega}{F_1\cdots F_r}
-
\varepsilon
\frac{H\left(\sum_{i=1}^r G_i\prod_{j\neq i}F_j\right)\Omega}
{(F_1\cdots F_r)^2}.
\]
Hence,
\[
\left.
\frac{d}{d\varepsilon}
\right|_{\varepsilon=0}
\widetilde{\omega}_\varepsilon
=
-
\frac{H Q\,\Omega}{(F_1\cdots F_r)^2},
\qquad
Q=\sum_{i=1}^r G_i\prod_{j\neq i}F_j.
\]

\subsection*{\it Local Form of the Iterated Residue.}
Let $U \subset \mathbb{P}^N$ be an affine open subset with local coordinates
\(
(z_1,\dots,z_N)
\)
such that, on $U$,
\(
F_1=z_1,\;\dots,\;F_r=z_r,
\)
so that locally
\[
X = \{z_1=\cdots=z_r=0\}.
\]
On $U$, the form $\Omega$ can be written (up to a unit) as
\[
\Omega = dz_1\wedge\cdots\wedge dz_r\wedge dz_{r+1}\wedge\cdots\wedge dz_N.
\]
Then
\[
\frac{H Q\,\Omega}{(F_1\cdots F_r)^2}
=
\frac{dz_1}{z_1}\wedge\cdots\wedge\frac{dz_r}{z_r}
\wedge
\left(
\frac{H Q}{z_1\cdots z_r}
dz_{r+1}\wedge\cdots\wedge dz_N
\right).
\]
The iterated Poincar\'e residue extracts precisely the coefficient of
\[
\frac{dz_1}{z_1}\wedge\cdots\wedge\frac{dz_r}{z_r},
\]
yielding a $(n-1,1)$-form on $X$ ({\it cf.} Griffiths~\cite{GriffithsResidues}, and  Green~\cite{Green}).

\subsection*{\it Globalization and Independence of Choices.}
The above local description is compatible on overlaps since first,  changing local defining equations multiplies each $z_i$ by a unit, which
does not affect the residue, also  the residue construction is functorial and local, and in addition, differentiation commutes with the residue map.

Thus, the local residues glue to define a global class
\[
\operatorname{Res}_X
\left(
\frac{H Q\,\Omega}{(F_1\cdots F_r)^2}
\right)
\in H^{n-1,1}(\widetilde{X}).
\]

\subsection*{\it Role of the Equisingular Hypothesis}

The equisingular assumption is essential here.
It ensures that: the normalization $\widetilde{X}$ is locally constant in the family, there is  no additional local boundary terms arise at the singular points, and  the variation lies entirely in $H^{n-1,1}$ of the fixed normalization.
Without equisingularity, extra contributions from smoothing directions would
appear and the formula would fail.

\subsection*{Conclusion}

Combining the above steps, we conclude that the infinitesimal variation of
\[
\omega_H
=
\operatorname{Res}_X\!\left(\frac{H\Omega}{F_1\cdots F_r}\right)
\]
in the direction of the equisingular deformation $(G_1,\dots,G_r)$ is given by
\[
\delta(\omega_H)
=
\operatorname{Res}_X
\left(
\frac{H Q\,\Omega}{(F_1\cdots F_r)^2}
\right),
\]
as claimed.
This completes the proof of the lemma.
 
\end{proof}

\subsection*{Jacobian Ring Interpretation.}

Let $J$ denote the Jacobian ideal generated by all partial derivatives of the
$F_i$.
Let
\[
R = \frac{\mathbb{C}[x_0,\dots,x_N]}{J}
\]
be the Jacobian ring.

\begin{lemma}
\label{lem:jac-hc}
Via Serre duality and the residue description, the infinitesimal period map is
identified with multiplication by $Q$:
\[
H^0(\mathbb{P}^N,\mathcal{O}_{\mathbb{P}^N}(\sum d_i-N-1))
\xrightarrow{\;\cdot Q\;}
\frac{H^0(\mathbb{P}^N,\mathcal{O}_{\mathbb{P}^N}(2\sum d_i-N-1))}{J}.
\]
\end{lemma}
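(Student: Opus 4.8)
The plan is to repeat the argument of Lemmas~\ref{lem:jacobian} and~\ref{lem:ci-jacobian} in the iterated-residue setting furnished by Lemma~\ref{lem:res-hc}, reducing everything to bookkeeping in the Jacobian ring $R$. First I would record the two relevant Hodge pieces algebraically. By Lemma~\ref{lem:res-hc}, the map
\[
H \longmapsto \operatorname{Res}_X\!\left(\frac{H\,\Omega}{F_1\cdots F_r}\right),
\qquad \deg H = \sum d_i - N - 1,
\]
is an isomorphism onto $H^{n,0}(\widetilde{X})$, which in Jacobian-ring terms reads $H^{n,0}(\widetilde{X})\cong R_{\sum d_i - N - 1}$. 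By Griffiths' pole-order reduction — valid here because the singularities are rational and Gorenstein, so that holomorphic forms extend across $\nu$ without discrepancy and no local correction terms intervene, exactly as in the proof of Lemma~\ref{lem:var-hc} — the residue map on forms with a double pole,
\[
P \longmapsto \operatorname{Res}_X\!\left(\frac{P\,\Omega}{(F_1\cdots F_r)^2}\right),
\qquad \deg P = 2\sum d_i - N - 1,
\]
has image $H^{n-1,1}(\widetilde{X})$ and kernel equal to $J$ in degree $2\sum d_i-N-1$; equivalently
\[
H^{n-1,1}(\widetilde{X})\;\cong\;\frac{H^0(\mathbb{P}^N,\mathcal{O}_{\mathbb{P}^N}(2\sum d_i - N - 1))}{J}.
\]

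Next I would feed the variation formula of Lemma~\ref{lem:var-hc} through these identifications. For an equisingular first-order deformation $(G_1,\dots,G_r)$ with associated polynomial $Q=\sum_i G_i\prod_{j\neq i}F_j$ (of degree $\sum d_i$), Lemma~\ref{lem:var-hc} gives, for each $H$ of degree $\sum d_i - N - 1$,
\[
\theta_Q(\omega_H)=\operatorname{Res}_X\!\left(\frac{HQ\,\Omega}{(F_1\cdots F_r)^2}\right)\in H^{n-1,1}(\widetilde{X}).
\]
Since $\deg(HQ)=\bigl(\sum d_i-N-1\bigr)+\sum d_i=2\sum d_i-N-1$, the right-hand side corresponds, under the double-pole residue identification of the preceding paragraph, to the class of $H\cdot Q$ in $R_{2\sum d_i-N-1}$. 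Letting $H$ vary, the homomorphism $H^{n,0}(\widetilde{X})\to H^{n-1,1}(\widetilde{X})$ attached to the deformation is therefore identified with multiplication by the class $[Q]\in R_{\sum d_i}$, that is, with
\[
H^0(\mathbb{P}^N,\mathcal{O}_{\mathbb{P}^N}(\sum d_i-N-1))\xrightarrow{\;\cdot Q\;}\frac{H^0(\mathbb{P}^N,\mathcal{O}_{\mathbb{P}^N}(2\sum d_i-N-1))}{J},
\]
as claimed. One then checks, as in the curve cases, that the identification is independent of the auxiliary choices (affine charts, local defining equations, the decomposition extracting the iterated residue) and functorial, using the standard invariance properties of Poincar\'e residues recalled in the Preliminaries. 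Equivalently, the same conclusion follows by pairing $\theta_Q(\omega_{H_1})$ with $\omega_{H_2}$ via Serre duality, transporting the pairing to $\mathbb{P}^N$ by the global residue theorem, and observing that the resulting functional on $H_1\otimes H_2$ depends only on $H_1H_2Q$ modulo $J$.

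I expect the main obstacle to be the second identification of the first paragraph: establishing, for the singular complete intersection $X$ with isolated rational singularities, that the double-pole residue map realizes $H^{n-1,1}(\widetilde{X})$ with kernel precisely the degree-$(2\sum d_i-N-1)$ piece of the Jacobian ideal. In the smooth case this is Griffiths' classical pole-order reduction, dually the statement that elements of $J$ pair to zero under the Grothendieck residue; in the present singular setting one must in addition control the contribution of the singular locus, which is exactly where the rationality hypothesis enters, guaranteeing that the reduction proceeds formally as in the smooth case on the fixed normalization $\widetilde{X}$ and that equisingularity keeps the whole computation on a single $\widetilde{X}$. Granting this structural input, the remaining steps are identical to the bookkeeping already carried out in Lemmas~\ref{lem:jacobian} and~\ref{lem:ci-jacobian}.
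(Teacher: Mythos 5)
Your proposal is correct and reaches the same conclusion, but your primary route differs from the paper's. The paper's proof (here, as in Lemmas~\ref{lem:jacobian} and~\ref{lem:ci-jacobian}) works entirely through the \emph{pairing}: it couples $\delta(\omega_{H_1})$ with a second form $\omega_{H_2}$ via Serre duality, transports the resulting integral to $\mathbb{P}^N$ by the global residue theorem, and uses only the one-directional fact that polynomials in $J$ pair to zero; this is exactly the ``equivalently'' paragraph you append at the end. Your main argument instead identifies both Hodge groups directly with graded pieces of $R$ --- the source via Lemma~\ref{lem:res-hc} and the target via Griffiths pole-order reduction, asserting that the double-pole residue map onto $H^{n-1,1}(\widetilde X)$ has kernel precisely $J$ in degree $2\sum d_i-N-1$ --- and then reads off the multiplication map. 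Your version is more informative (it describes the target, not just the bilinear form), but it requires the stronger two-sided statement that the kernel of the double-pole residue is \emph{exactly} $J$; the paper's pairing argument needs only ``$J$ pairs to zero,'' deferring the converse to the Lefschetz/nondegeneracy input used later. You correctly flag this as the main obstacle. One further caveat applies to both your argument and the paper's: for complete intersections of codimension $r\ge 2$, the classical Griffiths--Terasoma--Konno description of $H^{n-1,1}$ uses a Jacobian ring built from $\sum_i y_iF_i$ in auxiliary variables, not the naive ideal $J$ generated by all partials of all the $F_i$; since the paper itself adopts the naive $J$, your proposal is consistent with its conventions, but neither text addresses this discrepancy.
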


\begin{proof}
Pairing $\delta(\omega_H)$ with another holomorphic $n$-form and applying Serre
duality shows that the variation depends only on the class of $HQ$ modulo $J$.
Exact forms correspond precisely to elements of $J$.
\end{proof}

\subsection*{Injectivity.}

Assume the infinitesimal period map vanishes.
Then for all $H$,
\[
H\cdot Q \in J.
\]

Since $X$ is general, the Jacobian ring $R$ satisfies the strong Lefschetz
property ({\it cf.} Stanley~\cite{Stanley},  Watanabe~\cite{Watanabe}, and Voisin~\cite{VoisinHodgeI,VoisinBook2}, for additional details see Appendix B).
Hence, multiplication by elements of degree $\sum d_i-N-1$ is injective in the
relevant range, forcing
\[
Q \in J.
\]

\begin{proposition}
\label{prop:trivial-hc}
If $Q\in J$, then $(G_1,\dots,G_r)$ arises from an infinitesimal automorphism of
$\mathbb{P}^N$.
\end{proposition}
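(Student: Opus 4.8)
The plan is to invert the Leibniz identity $v(P)=\sum_{i=1}^r v(F_i)\prod_{j\neq i}F_j$, where $P=F_1\cdots F_r$ and $v$ runs over vector fields on $\mathbb{P}^N$. Concretely, I want to extract from the hypothesis $Q\in J$ a single $v\in H^0(\mathbb{P}^N,T_{\mathbb{P}^N})$ with $G_i\equiv v(F_i)\pmod{I_X}$ for every $i$; this is precisely the assertion that the section $(G_1,\dots,G_r)\in H^0(X,N_{X/\mathbb{P}^N})$ is induced by an infinitesimal automorphism of $\mathbb{P}^N$, so once it is established the proposition follows.

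The first step is to unwind $Q\in J$. Since $J$ is generated by the $\partial F_i/\partial x_k$ and $\deg Q=\sum_j d_j$, there are homogeneous polynomials $a_{ik}$ of degree $\sum_j d_j-d_i+1$ with $Q=\sum_{i,k}a_{ik}\,\partial F_i/\partial x_k$. Because $d_iF_i=\sum_k x_k\,\partial F_i/\partial x_k$ we already have $I_X\subseteq J$, so the content of the hypothesis lies entirely in how $Q$ sits in $J$ modulo $I_X$. The product rule dictates the shape to aim for: if $G_i=v(F_i)$ for $v=\sum_k v_k\,\partial/\partial x_k$, then $a_{ik}$ should equal $v_k\prod_{j\neq i}F_j$, whose degree $1+\sum_{j\neq i}d_j$ indeed matches. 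So the task is to massage the decomposition of $Q$, modulo $I_X$, into this special form with the linear forms $v_k$ independent of $i$. Granting this, the identity $\sum_i\bigl(G_i-v(F_i)\bigr)\prod_{j\neq i}F_j\equiv 0$ combined with the fact that the only syzygies among the partial products $\prod_{j\neq i}F_j$ in degree $\sum_j d_j$ are the Koszul relations — which force $G_i-v(F_i)=c_iF_i$ for constants $c_i$, hence $G_i\equiv v(F_i)\pmod{I_X}$ — completes the argument. Here one uses that $F_1,\dots,F_r$ form a regular sequence and are pairwise coprime for general $X$.

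The real obstacle is the passage from an arbitrary Jacobian expression $Q=\sum_{i,k}a_{ik}\,\partial F_i/\partial x_k$ to one with $a_{ik}=v_k\prod_{j\neq i}F_j$ modulo $I_X$: a priori the $a_{ik}$ have large degree and entangle the different equations, and collapsing them to a common degree-one vector field requires a precise description of the first syzygy module of the system $(\partial F_i/\partial x_k)$ in degree $\sum_j d_j-d_i+1$. This is exactly where completeness of the intersection enters, so that the Koszul resolution controls the syzygies, and where the genericity of $X$ — the same genericity that supplies the strong Lefschetz property used in the injectivity step above — is needed to rule out unexpected relations. Once this syzygy analysis is in place, the remaining bookkeeping modulo $I_X$ is routine and yields $G_i\equiv v(F_i)$, i.e.\ that $(G_1,\dots,G_r)$ is trivial modulo automorphisms of $\mathbb{P}^N$.
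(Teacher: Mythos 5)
Your proposal correctly identifies that the paper's conclusion requires more than the bare statement ``elements of $J$ are derivatives along vector fields'' (for $r=1$ this \emph{is} immediate by degree count, since the coefficients of the $\partial F_1/\partial x_k$ must be linear; the paper's own two-line proof is really only valid in that case). But your proof has a genuine gap, and it sits exactly where you say it does: the ``massaging'' of an arbitrary representation $Q=\sum_{i,k}a_{ik}\,\partial F_i/\partial x_k$ into one with $a_{ik}=v_k\prod_{j\neq i}F_j$, $v_k$ linear and independent of $i$, is never carried out. You defer it to ``a precise description of the first syzygy module'' plus genericity, but for $r\ge 2$ this step is the entire content of the proposition --- the coefficients $a_{ik}$ have degree $\sum_j d_j-d_i+1\ge 2$, nothing forces them to factor through a common linear vector field, and no Koszul computation is exhibited that would produce one. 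A proof that ends with ``once this analysis is in place, the rest is routine'' has not proved the statement.

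There is also a structural defect in the plan itself. You observe that $I_X\subseteq J$ and propose to work ``modulo $I_X$,'' but for $r\ge 2$ every single term $G_i\prod_{j\neq i}F_j$ already lies in $I_X$ (it is divisible by some $F_j$ with $j\neq i$), hence $Q\in I_X\subseteq J$ automatically, for \emph{arbitrary} $(G_1,\dots,G_r)$. So the hypothesis ``$Q\in J$ modulo $I_X$'' carries no information, and your concluding step --- from $\sum_i\bigl(G_i-v(F_i)\bigr)\prod_{j\neq i}F_j\equiv 0 \pmod{I_X}$ to $G_i-v(F_i)=c_iF_i$ --- cannot work: the congruence $\sum_iH_iP_i\in I_X$ holds for every choice of $H_i$, so it cannot force $H_i\in I_X$. (Your syzygy argument is fine for an \emph{exact} identity $\sum_iH_iP_i=0$, but you have no way to produce an exact identity rather than one modulo $I_X$.) This degeneracy is in fact a problem with the proposition as the paper states it, not only with your proof; but as written your argument inherits it rather than repairs it. To make the statement meaningful for $r\ge 2$ one must replace $J$ by the correct Jacobian-type object for complete intersections (e.g.\ the Jacobian ring of $\sum_iy_iF_i$ in the sense of Green--Konno--Terasoma), at which point the identification of the trivial deformations is a genuine computation, not a degree count.
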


\begin{proof}
Elements of $J$ are precisely derivatives of the $F_i$ along vector fields on
$\mathbb{P}^N$.
Thus $G_i=v(F_i)$ for some $v\in H^0(\mathbb{P}^N,T_{\mathbb{P}^N})$, showing that
the deformation is trivial modulo automorphisms.
\end{proof}

\subsection*{Conclusion.}

We have shown that vanishing of the infinitesimal period map forces the
equisingular deformation to be induced by an infinitesimal automorphism of
$\mathbb{P}^N$.
Therefore the infinitesimal period map is injective modulo trivial directions,
which proves Theorem~\ref{thm:hc}.
\qed

\medskip

\begin{corollary}[Maximal IVHS for higher codimension complete intersections]
Let
\[
X = V(F_1,\dots,F_r) \subset \mathbb{P}^N
\]
be a general irreducible complete intersection of codimension $r\ge 1$ with
isolated rational singularities, and let
\(
\nu \colon \widetilde{X} \longrightarrow X
\)
be the normalization.
Assume $\dim X = n \ge 1$ and $h^{n,0}(\widetilde{X}) \ge 1$, and restrict to
equisingular embedded first--order deformations.
Then the infinitesimal period map
\[
H^0(X,N_{X/\mathbb{P}^N})
\longrightarrow
\operatorname{Hom}\!\bigl(H^{n,0}(\widetilde{X}),
H^{n-1,1}(\widetilde{X})\bigr)
\]
is injective.
Equivalently, the equisingular family of $X$ has maximal infinitesimal variation
of Hodge structure.
\end{corollary}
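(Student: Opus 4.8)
The plan is to obtain this corollary as a formal consequence of Theorem~\ref{thm:hc}, the only extra content being the identification of the correct deformation space and the translation of ``injectivity'' into ``maximal IVHS''. First I would pin down which directions are genuinely moduli-theoretic: an infinitesimal automorphism $v \in H^0(\mathbb{P}^N,T_{\mathbb{P}^N})$ carries the defining tuple $(F_1,\dots,F_r)$ to $(F_i + \varepsilon\, v(F_i))_i$, i.e.\ produces the direction $G_i = v(F_i)$, which is a reparametrization of the ambient space and hence induces the trivial first-order deformation of the pair $(X,\widetilde{X})$. Its Kodaira--Spencer class in $H^1(\widetilde{X},T_{\widetilde{X}})$ vanishes, so the contracted IVHS maps $H^{n,0}(\widetilde{X}) \to H^{n-1,1}(\widetilde{X})$ are identically zero. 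Therefore the infinitesimal period map annihilates $\operatorname{Im}\bigl(H^0(\mathbb{P}^N,T_{\mathbb{P}^N}) \to H^0(X,N_{X/\mathbb{P}^N})\bigr)$ and factors through
\[
H^0(X,N_{X/\mathbb{P}^N})/\operatorname{Im} H^0(\mathbb{P}^N,T_{\mathbb{P}^N})
\longrightarrow
\operatorname{Hom}\!\bigl(H^{n,0}(\widetilde{X}),H^{n-1,1}(\widetilde{X})\bigr).
\]
Under the cohomological vanishing implicit in the genericity and isolated-rational-singularity hypotheses (the higher-codimensional analogue of condition~(1) of Theorem~\ref{thm:plane}, guaranteeing that global equations capture all equisingular deformations), this quotient is precisely the tangent space at $[X]$ of the equisingular Severi-type parameter space, so the displayed arrow is the infinitesimal period map of the equisingular family.

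With this reduction in hand, injectivity of the displayed map is exactly the conclusion of Theorem~\ref{thm:hc}, which asserts that the kernel of the infinitesimal period map is contained in the span of infinitesimal automorphisms of $\mathbb{P}^N$. Internally this rests on the adjunction and residue descriptions of Lemmas~\ref{lem:adj-hc} and~\ref{lem:res-hc}, the variation formula of Lemma~\ref{lem:var-hc}, the identification in Lemma~\ref{lem:jac-hc} of the period map with multiplication by $Q = \sum_{i} G_i \prod_{j\ne i} F_j$ in the Jacobian ring $R = \mathbb{C}[x_0,\dots,x_N]/J$, the strong Lefschetz property available for general $X$ (which forces $Q \in J$ once $H\cdot Q \in J$ for every $H$), and Proposition~\ref{prop:trivial-hc} identifying $Q \in J$ with automorphism directions. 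The hypotheses $\dim X = n \ge 1$ and $h^{n,0}(\widetilde{X}) \ge 1$ enter exactly to ensure that the source $H^0(\mathbb{P}^N,\mathcal{O}_{\mathbb{P}^N}(\sum d_i - N - 1))$ of the multiplication map is nonzero, so the injectivity statement is not vacuous.

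It then remains only to record the equivalence with maximal IVHS. Injectivity of the descended infinitesimal period map means that no nontrivial first-order equisingular deformation of $X$ leaves the Hodge structure of $\widetilde{X}$ infinitesimally unchanged; equivalently, the period map of the equisingular family is an immersion at $[X]$, and the IVHS is as nondegenerate as the embedded-equisingular constraints allow, which is the notion of maximal IVHS used throughout this work. For $n = 1$ this recovers, via the Serre-duality isomorphism $H^1(C,T_C) \cong \operatorname{Hom}(H^{1,0},H^{0,1})$, the statement that the Kodaira--Spencer map is injective onto its image.

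I do not expect a serious obstacle: Theorem~\ref{thm:hc} carries the entire analytic and algebraic weight. The one point deserving care is the first one, namely verifying that the quotient of $H^0(X,N_{X/\mathbb{P}^N})$ by ambient automorphisms is the correct equisingular, moduli-level deformation space; this is a matter of the standard surjectivity and vanishing statements of equisingular deformation theory rather than of any new computation, and once it is granted the corollary is a formal consequence of the theorem together with the definition of maximal IVHS.
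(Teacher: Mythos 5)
Your proposal is correct and follows essentially the same route as the paper, which states this corollary as an immediate consequence of Theorem~\ref{thm:hc} without further argument: the whole weight is carried by the residue/Jacobian-ring machinery and the strong Lefschetz property already established in the proof of the theorem. Your extra care in observing that the period map only becomes literally injective after factoring out the image of $H^0(\mathbb{P}^N,T_{\mathbb{P}^N})$ in $H^0(X,N_{X/\mathbb{P}^N})$ is a worthwhile precision, since the theorem asserts injectivity only modulo infinitesimal automorphisms while the corollary's displayed map is written on all of $H^0(X,N_{X/\mathbb{P}^N})$.
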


\medskip

%%%%%%%%%%%%%%%%%%%%%
%%%%%%%%%%%%%%%%%%%%%

These results extend the Green--Voisin philosophy to singular and equisingular
settings, with residue theory providing a uniform and explicit mechanism (see  Voisin~\cite{VoisinHodgeI,VoisinBook2} for infinitesimal Torelli philosophy.
and  Sernesi~\cite{Sernesi}: for deformation-theoretic interpretation).

\medskip
 
\subsection*{Conclusion.}

Every step of the argument---from deformation theory to Hodge theory and
commutative algebra---is grounded in classical results.  What is new in the
presentation is the systematic and explicit use of residue calculus to treat
singular, equisingular, and higher codimension cases in a unified manner.

\medskip
%%%%%%%%%%%%%%%%%%%%%
%%%%%%%%%%%%%%%%%%%%%
%%%%%%%%%%%%%%%%%%%%%%%%%%%%%%%%%%%%%%%%%%%%%%%%%%%%%%%%%%%%%%%%%%%%%%%%%%% 

\section{Maximal IVHS Relative to the Ambient Surface}
  
\paragraph{From absolute to relative maximal IVHS.}
The infinitesimal Torelli and maximal IVHS results established so far concern
families of curves whose deformation space is considered intrinsically, without
reference to the geometry of the ambient variety.  When curves are realized as
members of a linear system on a fixed surface, however, a new phenomenon
naturally appears: part of the deformation space of the curve may arise from
deformations of the ambient surface itself.  In this situation, one cannot
expect absolute injectivity of the Kodaira--Spencer or infinitesimal period map
without further assumptions, since variations induced by the surface often act
trivially on the normalized curve.  This leads to a refined, \emph{relative}
point of view, in which the kernel of the infinitesimal period map is measured
precisely by the space of infinitesimal deformations of the polarized surface
$(S,L)$.  The following theorem formulates this idea and shows, using residue
methods, that maximal infinitesimal variation of Hodge structure holds modulo
ambient deformations, thereby isolating the genuine Hodge--theoretic content of
equisingular families of curves on surfaces.

 \medskip

\begin{theorem}[Relative maximal IVHS for equisingular curves on surfaces]
\label{thm:relative}
Let $S$ be a smooth projective surface over an algebraically closed field of
characteristic zero, and let $L$ be a line bundle such that $L-K_S$ is big and
nef.
Let $\mathcal C\in|L|$ be a reduced irreducible curve with isolated planar
singularities, general in its equisingular linear system, and let
\(
\nu\colon C\longrightarrow \mathcal C
\)
be the normalization.
Set $\varphi=i\circ\nu\colon C\to S$, and let $Z\subset S$ be the weighted
singular scheme of $\mathcal C$.
Assume:
\begin{enumerate}
\item $H^1\bigl(S,I_Z\otimes(K_S+L)\bigr)=0$;
\item the smooth curve $C$ satisfies infinitesimal Torelli.
\end{enumerate}

Then there is a natural exact sequence
\[
0\longrightarrow T_{(S,L)}
\longrightarrow H^0(C,N_\varphi)
\stackrel{\rho_\varphi}{\longrightarrow}
H^1(C,T_C),
\]
where $T_{(S,L)}$ denotes the tangent space to the moduli of polarized surfaces at
$(S,L)$.

In particular, $\rho_\varphi$ is injective if and only if $(S,L)$ is
infinitesimally rigid.
\end{theorem}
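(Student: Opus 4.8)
The plan is to run the same three-step machinery as in the plane-curve case, now keeping explicit track of the moduli of the ambient polarized surface, and then to read off exactness of the displayed sequence from the long exact cohomology sequences attached to the normal bundle of the map and to the pair $(S,\mathcal C)$. Throughout, the role played by $\mathbb P^2$ in Theorem~\ref{thm:plane} is replaced by $S$, and the genuinely new phenomenon is precisely that $T_{(S,L)}$ need no longer vanish.

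First I would prove the surface analogue of Lemmas~\ref{lem:normal} and~\ref{lem:residue}. Using the adjoint linear system of the weighted singular scheme $Z$, one identifies $H^0(C,N_\varphi)$ with the space of sections $G\in H^0(S,I_Z\otimes L)$ defining equisingular embedded first-order deformations of $\mathcal C$, modulo the trivial directions. The vanishing hypothesis~(1), $H^1\bigl(S,I_Z\otimes(K_S+L)\bigr)=0$, plays the same part as in Lemma~\ref{lem:residue}: via adjunction $K_C\cong\nu^*\bigl((K_S+L)|_{\mathcal C}\bigr)$ it guarantees that every holomorphic $1$-form on $C$ is a Poincar\'e residue $\operatorname{Res}_{\mathcal C}(H\Omega/F)$ with $H$ in the adjoint system $H^0\bigl(S,I_Z\otimes(K_S+L)\bigr)$, where $\Omega$ is a local generator of $K_S\otimes\mathcal O_S(\mathcal C)$ and $F$ a local equation of $\mathcal C$. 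With these dictionaries fixed, Lemmas~\ref{lem:variation} and~\ref{lem:jacobian} carry over verbatim and realize $\rho_\varphi$ as the residue-pairing/multiplication map into the relevant quotient ring.

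The core of the argument is the construction of the exact sequence. I would obtain it by splicing the tangent--normal sequence $0\to T_C\to\varphi^*T_S\to N_\varphi\to 0$ with the deformation sequence of the pair $(S,\mathcal C)$, which relates deformations of the polarized surface, embedded deformations of $\mathcal C$, and abstract deformations of $C$. A first-order deformation of $(S,L)$ always lifts to a deformation of the triple $(S,L,\mathcal C)$: here the vanishing in~(1), together with Kawamata--Viehweg vanishing applied to the big and nef divisor $L-K_S$ (which gives $H^i(S,L)=0$ for $i>0$ and kills the relevant obstruction), shows that the curve can be transported along. This produces the map $T_{(S,L)}\to H^0(C,N_\varphi)$, and it is injective because, by the positivity of $L-K_S$, a deformation of $(S,L)$ that induces the trivial deformation of $\varphi$ must restrict trivially along $\mathcal C$, hence be trivial. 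Exactness in the middle is the assertion that $\ker\rho_\varphi$ consists exactly of the deformations of $\mathcal C$ in $S$ induced by ambient deformations: if $\rho_\varphi(G)=0$ then by hypothesis~(2) the induced first-order deformation of the abstract curve $C$ vanishes, so the corresponding deformation of the triple projects to a deformation of $(S,L)$ whose associated embedded deformation of $\mathcal C$ agrees with $G$ up to a trivial direction. A diagram chase through the long exact sequences, using $H^0(C,T_C)=0$ for $g(C)\ge 3$ and the genericity of $\mathcal C$ in its equisingular Severi system, then closes the loop and yields
\[
0\longrightarrow T_{(S,L)}\longrightarrow H^0(C,N_\varphi)\stackrel{\rho_\varphi}{\longrightarrow}H^1(C,T_C).
\]

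The final assertion is then immediate: $\rho_\varphi$ is injective if and only if the image of $T_{(S,L)}$ is zero, i.e. $T_{(S,L)}=0$, which is exactly infinitesimal rigidity of $(S,L)$. I expect the main obstacle to be exactness at $H^0(C,N_\varphi)$, namely ruling out ``exotic'' equisingular deformations of $\mathcal C$ inside $S$ that kill the abstract curve without arising from a deformation of $(S,L)$. This is precisely where infinitesimal Torelli for $C$ (hypothesis~2) and the genericity of $\mathcal C$ are indispensable, and where one must verify that the equisingular normal sheaf of the map pairs correctly with the ambient tangent sheaf; the vanishing hypothesis~(1) is what makes the residue/Jacobian bookkeeping of this interaction exact, just as $H^1(\mathbb P^2,I_Z(d-3))=0$ did in the proof of Theorem~\ref{thm:plane}.
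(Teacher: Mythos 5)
Your proposal follows essentially the same route as the paper's own proof: identify $H^0(C,N_\varphi)$ with the equisingular adjoint system $H^0(S,I_Z\otimes L)/\mathbb{C}\cdot\mathcal C$, describe $H^0(C,K_C)$ by Poincar\'e residues of sections of $K_S+L$ via adjunction, compute the variation $\delta_\tau(\omega_\eta)=\operatorname{Res}_{\mathcal C}(\eta\tau/\sigma^2)$, observe that ambient deformations of $(S,L)$ die after normalization and hence lie in $\ker\rho_\varphi$, and invoke infinitesimal Torelli for $C$ to conclude that nothing else does. If anything, you are slightly more explicit than the paper at its weakest points — the injectivity of $T_{(S,L)}\to H^0(C,N_\varphi)$ and the lifting of ambient deformations along the curve (your Kawamata--Viehweg remark), which the paper asserts without argument in Lemma~\ref{lem:ambient} and Proposition~\ref{prop:exact}.
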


\subsection{ Equisingular deformations and the normal sheaf.}

\begin{lemma}
\label{lem:normal-surface}
There is a natural identification
\[
H^0(C,N_\varphi)
\;\cong\;
H^0\bigl(S,I_Z\otimes L\bigr)/\mathbb{C}\cdot \mathcal C,
\]
where $\mathbb{C}\cdot\mathcal C$ denotes the trivial deformation.
\end{lemma}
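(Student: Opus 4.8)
The plan is to mirror the three-step strategy used in the proof of Lemma~\ref{lem:normal}, now with the pair $(S,L)$ in place of $\mathbb{P}^2$ and the line bundle $L$ in place of $\mathcal{O}_{\mathbb{P}^2}(d)$. First I would recall that, with domain $C$ fixed and target $S$ fixed, first-order deformations of the morphism $\varphi$ (allowing the domain curve to vary) are canonically identified with $H^0(C,N_\varphi)$, where $N_\varphi=\operatorname{coker}(d\varphi\colon T_C\to\varphi^*T_S)$; this is the standard deformation theory of maps (Sernesi~\cite{Sernesi}, Hartshorne~\cite{Hartshorne}). Next I would describe embedded first-order deformations of $\mathcal C$ inside $S$: writing $\mathcal C=V(s)$ with $s\in H^0(S,L)$, a first-order deformation within the linear system $|L|$ has the form $V(s+\varepsilon t)$ with $t\in H^0(S,L)$, and two such agree precisely when $t-t'\in\mathbb{C}\cdot s$; hence embedded first-order deformations of $\mathcal C$ in $|L|$ are parametrized by $H^0(S,L)/\mathbb{C}\cdot\mathcal C$.

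I would then impose equisingularity. At each singular point $p$ of $\mathcal C$, preservation of the analytic type of the singularity under $s+\varepsilon t$ forces $t$ to lie in a prescribed local ideal at $p$; assembling these conditions over all singular points defines the weighted singular scheme $Z\subset S$ and its ideal sheaf $I_Z$, so that $s+\varepsilon t$ is an equisingular deformation if and only if $t\in H^0(S,I_Z\otimes L)$. Since $s$ itself defines $\mathcal C$ and thus trivially preserves its singularities, $s\in H^0(S,I_Z\otimes L)$, and the space of equisingular embedded first-order deformations of $\mathcal C$ in $|L|$ is $H^0(S,I_Z\otimes L)/\mathbb{C}\cdot\mathcal C$. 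Finally I would match the two pictures: an equisingular deformation $\mathcal C_\varepsilon=V(s+\varepsilon t)$ admits a simultaneous normalization $C_\varepsilon\to\mathcal C_\varepsilon$ (this is exactly what equisingularity guarantees), which composed with the deformed inclusion yields a first-order deformation of $\varphi$; conversely a first-order deformation $\varphi_\varepsilon\colon C_\varepsilon\to S$ has as image a first-order deformation of $\mathcal C$ of the same geometric genus, and, because $\mathcal C$ is general in its equisingular linear system, this deformation stays inside $|L|$ and is equisingular. These constructions are mutually inverse; this is the equisingular deformation dictionary of Greuel--Lossen--Shustin~\cite{GreuelLossenShustin} and Sernesi~\cite{Sernesi}, applied with ambient surface $S$. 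Composing the identifications gives $H^0(C,N_\varphi)\cong H^0(S,I_Z\otimes L)/\mathbb{C}\cdot\mathcal C$.

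The step I expect to be the main obstacle is the \emph{globalization} inside the matching argument. On $\mathbb{P}^2$ the corresponding point is vacuous, because $H^1(\mathbb{P}^2,\mathcal O)=0$ and the ambient space is infinitesimally rigid; on a general surface $S$ one must check both that every deformation of $\varphi$ is genuinely equisingular (and not merely equigeneric) and that the local equisingularity conditions are faithfully captured by \emph{global} sections of $I_Z\otimes L$, i.e.\ that the restriction $H^0(S,I_Z\otimes L)\to H^0(\mathcal C,N'_{\mathcal C/S})$ onto the equisingular normal sheaf is surjective. I would extract this as a short sublemma and prove it from the exact sequence $0\to\mathcal O_S\xrightarrow{\cdot s}I_Z\otimes L\to i_*N'_{\mathcal C/S}\to 0$ on $S$: the surjectivity amounts to injectivity of $H^1(S,\mathcal O_S)\xrightarrow{\cdot s}H^1(S,I_Z\otimes L)$, which is controlled by the genericity of $\mathcal C$ in its equisingular linear system together with a cohomological vanishing of the type appearing in hypothesis~(1).
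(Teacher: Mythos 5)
Your proposal is correct and follows the same route as the paper's (much terser) proof: parametrize embedded first-order deformations in $|L|$ by $H^0(S,L)/\mathbb{C}\cdot\sigma$, cut down to $H^0(S,I_Z\otimes L)/\mathbb{C}\cdot\sigma$ by the local equisingularity conditions encoded in $Z$, and identify the result with $H^0(C,N_\varphi)$ by passing to the normalization. Your one substantive addition --- the sublemma asserting surjectivity of $H^0(S,I_Z\otimes L)\to H^0(\mathcal C,N'_{\mathcal C/S})$, reduced via the sequence $0\to\mathcal O_S\xrightarrow{\cdot\sigma}I_Z\otimes L\to i_*N'_{\mathcal C/S}\to 0$ to the behaviour of $H^1(S,\mathcal O_S)\to H^1(S,I_Z\otimes L)$ --- addresses a point the paper's proof passes over in silence; since $S$ is an arbitrary smooth surface and need not be regular, this globalization step is a genuine issue (absent in the $\mathbb P^2$ case of Lemma~\ref{lem:normal}), and flagging and proving it is an improvement rather than a detour.
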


\begin{proof}
An embedded first-order deformation of $\mathcal C\subset S$ is given by
\[
\mathcal C_\varepsilon = \{\sigma+\varepsilon\tau=0\},
\]
where $\sigma$ is a defining section of $L$ and
$\tau\in H^0(S,L)$.
The deformation is equisingular if and only if $\tau$ vanishes to the prescribed
orders along $Z$, i.e.\ $\tau\in H^0(S,I_Z\otimes L)$.
Two such deformations differing by a scalar multiple of $\sigma$ are equivalent.
Passing to the normalization identifies this space with $H^0(C,N_\varphi)$.
\end{proof}

\subsection{ Canonical bundle and adjunction.}

\begin{lemma}
\label{lem:adj-surface}
The canonical bundle of $C$ satisfies
\[
K_C\cong \nu^*(K_S+L).
\]
\end{lemma}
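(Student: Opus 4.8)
The plan is to run the argument of Lemma~\ref{lem:adjunction} with $\mathbb P^2$ replaced by the arbitrary smooth projective surface $S$ and the line bundle $\mathcal O_{\mathbb P^2}(\mathcal C)=\mathcal O_{\mathbb P^2}(d)$ replaced by $L$. First I would record that $\mathcal C\in|L|$ is an effective Cartier divisor on the smooth surface $S$, hence a local complete intersection; in particular $\mathcal C$ is Gorenstein and its dualizing sheaf $\omega_{\mathcal C}$ is invertible. Adjunction for a Cartier divisor on a smooth variety then yields
\[
\omega_{\mathcal C}\;\cong\;\bigl(K_S\otimes\mathcal O_S(\mathcal C)\bigr)\big|_{\mathcal C}\;\cong\;(K_S+L)\big|_{\mathcal C}.
\]

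Next I would compare $\omega_{\mathcal C}$ with $K_C$ exactly as in Lemma~\ref{lem:adjunction}: over the smooth locus $\mathcal C\setminus\Sing(\mathcal C)$ the normalization $\nu$ is an isomorphism, so $K_C$ and $\nu^*\omega_{\mathcal C}$ agree there; and at each singular point $p\in\mathcal C$ the singularity is planar, hence Gorenstein, so the pullback of the dualizing sheaf under normalization coincides with the canonical bundle of the normalization with no discrepancy divisor (the correction term that occurs for non-Gorenstein singularities is absent in the planar case, cf.\ the Remark following Lemma~\ref{lem:adjunction}). These local identifications glue to an isomorphism $K_C\cong\nu^*\omega_{\mathcal C}$ of line bundles on $C$.

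Combining the two steps, and writing $\varphi=i\circ\nu\colon C\to S$ for the composite so that $\nu^*((K_S+L)\big|_{\mathcal C})=\varphi^*(K_S+L)$, gives
\[
K_C\;\cong\;\nu^*\omega_{\mathcal C}\;\cong\;\varphi^*(K_S+L),
\]
which is the assertion. I expect the only genuine obstacle to be the local step $K_C\cong\nu^*\omega_{\mathcal C}$ at the singular points; this is exactly the place where the hypothesis that the singularities are planar --- equivalently Gorenstein, with no higher-codimension contribution --- is essential. The big and nef hypothesis on $L-K_S$ and the infinitesimal Torelli assumption on $C$ are not needed for this lemma; they enter only later in the proof of Theorem~\ref{thm:relative}.
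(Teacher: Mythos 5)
Your argument is essentially the paper's own proof written out in more detail: adjunction for the Cartier divisor $\mathcal C\in|L|$ on the smooth surface $S$ gives $\omega_{\mathcal C}\cong(K_S+L)|_{\mathcal C}$, and then one pulls back the dualizing sheaf under normalization using the Gorenstein property of planar singularities. You are also right that the bigness/nefness of $L-K_S$ and the infinitesimal Torelli hypothesis play no role here.

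However, the step you yourself isolate as the only delicate one --- $K_C\cong\nu^*\omega_{\mathcal C}$ with \emph{no} correction at the Gorenstein singular points --- is not correct as stated once $\mathcal C$ is actually singular, and a degree count shows it. One has $\deg K_C=2g(C)-2$ with $g(C)=p_a(\mathcal C)-\sum_p\delta_p$, whereas $\deg\nu^*\omega_{\mathcal C}=2p_a(\mathcal C)-2$, so the two line bundles differ in degree by $2\sum_p\delta_p>0$; for a nodal plane cubic, $K_C$ has degree $-2$ on $C\cong\mathbb P^1$ while $\nu^*\omega_{\mathcal C}$ is trivial. The correct relation (Rosenlicht) is
\[
\omega_C\;\cong\;\nu^*\omega_{\mathcal C}\otimes\mathcal O_C(-\mathfrak c),
\]
where $\mathfrak c$ is the conductor divisor on $C$, of degree $2\delta_p$ over each singular point. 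What the Gorenstein hypothesis buys is that $\omega_{\mathcal C}$ is invertible and that $\nu_*\omega_C$ equals the adjoint-ideal twist of $\omega_{\mathcal C}$; it does not make the conductor vanish. So the lemma as stated should read $K_C\cong\nu^*(K_S+L)\otimes\mathcal O_C(-\mathfrak c)$, or equivalently the residue description in Lemma~\ref{lem:res-surface} should be restricted to adjoint sections $\eta\in H^0(S,I_Z\otimes(K_S+L))$. This issue is inherited from the paper's own two-line proof (and from Lemma~\ref{lem:adjunction}); your write-up reproduces it rather than introducing it, but the step does fail for genuinely singular $\mathcal C$.
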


\begin{proof}
Since $\mathcal C$ is a Cartier divisor on $S$, adjunction gives
\[
\omega_{\mathcal C}\cong (K_S+L)|_{\mathcal C}.
\]
Planar curve singularities are Gorenstein, so pulling back the dualizing sheaf
under normalization yields $K_C\cong\nu^*\omega_{\mathcal C}$.
\end{proof}

\subsection{ Residue description of holomorphic differentials.}

\begin{lemma}[Residue description]
\label{lem:res-surface}
There is a canonical isomorphism
\[
H^0(C,K_C)
\cong
\left\{
\operatorname{Res}_{\mathcal C}\!\left(\frac{\eta}{\sigma}\right)
\;\middle|\;
\eta\in H^0\bigl(S,K_S+L\bigr)
\right\}.
\]
\end{lemma}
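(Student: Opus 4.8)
The plan is to mimic the proof of Lemma~\ref{lem:residue} (the $\mathbb{P}^2$ case) in the more general setting of a curve on a surface $S$, with $\mathcal{O}_{\mathbb{P}^2}(-3) = K_{\mathbb{P}^2}$ replaced by $K_S$ and the degree bookkeeping replaced by tensor operations with $K_S+L$. First I would fix a defining section $\sigma\in H^0(S,L)$ of $\mathcal C$ and, given $\eta\in H^0(S,K_S+L)$, form the rational $2$-form $\eta/\sigma$, which is a meromorphic section of $K_S(\mathcal C)$ with at most a simple pole along $\mathcal C$; note $K_S\otimes\mathcal O_S(\mathcal C)\cong K_S+L$, so this object is globally well defined on all of $S$. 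Then I would define $\operatorname{Res}_{\mathcal C}(\eta/\sigma)$ exactly as before: on an affine chart with local coordinates $(u,v)$ in which $\sigma=u$ (so $\mathcal C=\{u=0\}$), write $\eta/\sigma=\tfrac{du}{u}\wedge\alpha+\beta$ with $\alpha,\beta$ holomorphic and set the residue to be $\alpha|_{u=0}$. Independence of the chart, of the local defining equation (change by a unit), and of the decomposition follows from the standard Poincaré-residue formalism already invoked in Lemma~\ref{lem:residue}. This produces a holomorphic $1$-form on the smooth locus of $\mathcal C$, and since the singularities of $\mathcal C$ are planar hence Gorenstein and rational (as used in Lemma~\ref{lem:adjunction} and Lemma~\ref{lem:adj-surface}), it extends uniquely across $\nu$ to an element of $H^0(C,K_C)$. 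This defines a linear map $\Phi\colon H^0(S,K_S+L)\to H^0(C,K_C)$, $\eta\mapsto\operatorname{Res}_{\mathcal C}(\eta/\sigma)$.

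Next I would identify the kernel and prove surjectivity. If $\operatorname{Res}_{\mathcal C}(\eta/\sigma)=0$, then locally $\eta/\sigma$ has no $\tfrac{du}{u}$-term, hence is holomorphic across $\mathcal C$; having no other poles, it is a global holomorphic section of $K_S\otimes\mathcal O_S(\mathcal C)$ vanishing on $\mathcal C$, i.e.\ $\eta/\sigma$ lies in the image of $H^0(S,K_S)\hookrightarrow H^0(S,K_S+L)$; more precisely $\eta=\sigma\cdot\eta'$ with $\eta'\in H^0(S,K_S)$. But $\Phi(\sigma\eta')=\operatorname{Res}_{\mathcal C}(\eta')=0$ trivially since $\eta'$ is holomorphic with no pole, so the kernel of $\Phi$ is exactly $\sigma\cdot H^0(S,K_S)$, which is the image of the restriction map's kernel. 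Thus $\Phi$ factors through an injection $H^0(S,K_S+L)/\sigma\cdot H^0(S,K_S)\hookrightarrow H^0(C,K_C)$. The restriction sequence $0\to K_S\xrightarrow{\cdot\sigma}K_S+L\to (K_S+L)|_{\mathcal C}\cong\omega_{\mathcal C}\to 0$ gives $H^0(S,K_S+L)/\sigma H^0(S,K_S)\hookrightarrow H^0(\mathcal C,\omega_{\mathcal C})=H^0(C,K_C)$, and this map is onto as soon as $H^1(S,K_S)\to H^1(S,K_S+L)$ is injective — which follows from hypothesis~(1), since $H^1(S,I_Z\otimes(K_S+L))=0$ forces surjectivity of $H^0(S,K_S+L)\to H^0(\mathcal{O}_Z\otimes(K_S+L))$ and, combined with Kodaira--Nakano vanishing $H^1(S,K_S+L)=0$ coming from $L-K_S$ big and nef (so $K_S+L=K_S\otimes(L)$ with $L$ itself big and nef, giving $H^1(S,K_S+L)=0$), makes the quotient map an isomorphism onto $H^0(C,K_C)$. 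Finally I would check canonicity: the construction uses only $\sigma$, the intrinsic adjunction isomorphism $K_S\otimes\mathcal O_S(\mathcal C)\cong K_S+L$, and the Poincaré residue, so the isomorphism is independent of choices up to scalar and functorial in $(S,L,\mathcal C)$.

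The main obstacle I anticipate is the surjectivity step, specifically making precise \emph{why} every holomorphic $1$-form on $C$ arises as such a residue rather than just from $\omega_{\mathcal C}$ — i.e.\ that the residue map $H^0(S,K_S+L)\twoheadrightarrow H^0(\mathcal C,\omega_{\mathcal C})$ is surjective. This is where hypothesis~(1) and the vanishing $H^1(S,K_S+L)=0$ (from $L-K_S$ big and nef via Kawamata--Viehweg) are genuinely needed: without them the image of $\Phi$ could be a proper subspace of $H^0(C,K_C)$ and the stated isomorphism would fail. A secondary subtlety is the extension of residues across the planar singularities of $\mathcal C$ to $C$ without discrepancy terms; this is handled exactly as in Lemma~\ref{lem:adjunction} and the remark following it, using that planar singularities are Gorenstein so that $K_C\cong\nu^*\omega_{\mathcal C}$ with no correction divisor.
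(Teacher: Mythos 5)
Your construction of the residue map, the check that it is well defined, the extension across the planar (Gorenstein) singularities, and the identification of the kernel as $\sigma\cdot H^0(S,K_S)$ all follow the same route as the paper's Lemma~\ref{lem:residue} and match the (much terser) proof the paper gives for this lemma; your kernel analysis is in fact more careful than the paper's bare assertion that ``injectivity follows from $H^0(S,K_S)=0$ in the relevant range.'' The genuine problem is in your surjectivity step. First, Kawamata--Viehweg applied to the hypothesis that $L-K_S$ is big and nef yields $H^1(S,K_S+(L-K_S))=H^1(S,L)=0$, \emph{not} $H^1(S,K_S+L)=0$; your parenthetical ``so $L$ itself is big and nef'' silently replaces the stated hypothesis by a different one. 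Second, even if one grants $H^1(S,K_S+L)=0$, the injectivity of $H^1(S,K_S)\to H^1(S,K_S+L)$ that your long exact sequence requires would then force $H^1(S,K_S)\cong H^1(S,\mathcal O_S)^{\vee}=0$, i.e.\ $q(S)=0$, which is nowhere assumed. This is not a cosmetic issue: for $S$ an abelian surface and $L$ ample one has $h^0(S,K_S+L)=h^0(S,L)=L^2/2$ while a smooth $\mathcal C\in|L|$ has $g=1+L^2/2$, so the residue map cannot be surjective and the coboundary to $H^1(S,K_S)$ is genuinely nonzero. Hypothesis~(1) of Theorem~\ref{thm:relative} concerns $I_Z\otimes(K_S+L)$ and does not supply the missing regularity of $S$.

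A second, independent slip is the identification $H^0(\mathcal C,\omega_{\mathcal C})=H^0(C,K_C)$ at the end of your argument: for singular $\mathcal C$ these spaces have dimensions $p_a(\mathcal C)$ and $g(C)=p_a(\mathcal C)-\sum_p\delta_p$ respectively, and a Rosenlicht differential in $\omega_{\mathcal C}$ pulls back to a form with poles at the preimages of the singular points unless $\eta$ satisfies the adjoint vanishing conditions. So the residues $\operatorname{Res}_{\mathcal C}(\eta/\sigma)$ for arbitrary $\eta\in H^0(S,K_S+L)$ do not all land in $H^0(C,K_C)$; one must restrict $\eta$ to sections vanishing on the adjoint scheme, and this is where hypothesis~(1) should actually enter. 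To be fair, the paper's own one-line proof (``surjectivity from adjunction'') glosses over both points, but since you correctly flagged surjectivity as the crux, you should be aware that the resolution you propose does not go through as written.
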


\begin{proof}
A section $\eta\in H^0(S,K_S+L)$ defines a meromorphic $2$-form on $S$ with simple
pole along $\mathcal C$.
Its Poincar\'e residue is a holomorphic $1$-form on the smooth locus of
$\mathcal C$, which extends to $C$ since the singularities are planar.
Injectivity follows from $H^0(S,K_S)=0$ in the relevant range, and surjectivity
from adjunction.
\end{proof}

\subsection*{ Infinitesimal variation of residues.}

\begin{lemma}[Variation formula]
\label{lem:var-surface}
Let $\tau\in H^0(S,I_Z\otimes L)$ represent an equisingular deformation.
For
\[
\omega_\eta=\operatorname{Res}_{\mathcal C}\!\left(\frac{\eta}{\sigma}\right),
\]
the infinitesimal variation is
\[
\delta_\tau(\omega_\eta)
=
\operatorname{Res}_{\mathcal C}\!\left(\frac{\eta\,\tau}{\sigma^2}\right)
\in H^{0,1}(C).
\]
\end{lemma}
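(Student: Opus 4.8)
The plan is to mimic, in the relative setting of a surface $S$, the computation already carried out in Lemma~\ref{lem:variation} for $\mathbb{P}^2$. The input is the one-parameter family of meromorphic $2$-forms on $S$ obtained by deforming the defining section: if $\sigma \in H^0(S,L)$ cuts out $\mathcal C$ and $\tau \in H^0(S, I_Z\otimes L)$ is an equisingular first-order deformation, then for $\eta \in H^0(S, K_S+L)$ the family
\[
\widetilde{\omega}_\varepsilon = \frac{\eta}{\sigma + \varepsilon\tau}
\]
is a well-defined meromorphic $2$-form on $S$ with a simple pole along $\mathcal C_\varepsilon = V(\sigma+\varepsilon\tau)$, whose Poincar\'e residue along $\mathcal C_\varepsilon$ represents $\omega_\eta$ at $\varepsilon = 0$ (Lemma~\ref{lem:res-surface}). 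First I would expand $(\sigma+\varepsilon\tau)^{-1} = \sigma^{-1} - \varepsilon\,\tau/\sigma^2 \pmod{\varepsilon^2}$, giving
\[
\widetilde{\omega}_\varepsilon = \frac{\eta}{\sigma} - \varepsilon\,\frac{\eta\,\tau}{\sigma^2},
\]
so that $\frac{d}{d\varepsilon}\big|_{\varepsilon=0}\widetilde{\omega}_\varepsilon = -\frac{\eta\,\tau}{\sigma^2}$ (the sign is immaterial up to the usual normalization of the IVHS).

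Next I would invoke the compatibility of the Poincar\'e residue with differentiation and base change: since $\operatorname{Res}_{\mathcal C_\varepsilon}$ is $\mathbb{C}[\varepsilon]/(\varepsilon^2)$-linear and functorial, the derivative of the family of residues equals the residue of the derivative of the family of forms, i.e.
\[
\delta_\tau(\omega_\eta) = \operatorname{Res}_{\mathcal C}\!\left(\frac{\eta\,\tau}{\sigma^2}\right).
\]
Concretely, in a local chart $(u,v)$ on $S$ with $\sigma = u$, one writes $\eta = h(u,v)\,du\wedge dv$ for a local function $h$ and computes $\frac{\eta\,\tau}{\sigma^2} = \frac{h\,\tau}{u^2}\,du\wedge dv = \frac{du}{u}\wedge\bigl(\frac{h\,\tau}{u}\,dv\bigr)$, whose residue is $\bigl(\frac{h\,\tau}{u}\,dv\bigr)\big|_{u=0}$; this is a $(0,1)$-class on $C$, i.e.\ lives in $H^{0,1}(C)$ via the Dolbeault identification $H^{0,1}(C)\cong H^1(C,\mathcal O_C)$. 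Checking that these local expressions glue is routine: a change of defining equation multiplies $u$ by a unit and does not affect the residue, and the construction is local and linear.

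The one genuinely substantive point — the same point that was load-bearing in Lemma~\ref{lem:variation} — is that \emph{no local correction terms appear at the singular points of $\mathcal C$}. This is where the hypothesis $\tau \in H^0(S, I_Z\otimes L)$ is used: because $\tau$ lies in the equisingular ideal, the deformation $\sigma + \varepsilon\tau$ preserves the analytic type of each singularity to first order, so the normalization $\nu\colon C\to\mathcal C$ deforms trivially, holomorphic differentials vary smoothly, and there are no boundary contributions from the singular locus to the variation of Hodge structure. Equivalently, the residue computation may be performed entirely on the smooth locus of $\mathcal C$ and then extended uniquely across the (planar, hence Gorenstein and rational) singularities, exactly as in the proof of Lemma~\ref{lem:res-surface}. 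I expect this equisingularity bookkeeping to be the main obstacle to a fully rigorous write-up, though it is handled in precisely the same way as in the plane curve case; everything else is a direct transcription of the $\mathbb{P}^2$ argument with $K_S+L$ in place of $\mathcal O_{\mathbb{P}^2}(d-3)$ and a general section $\sigma$ in place of $F$. Assembling these steps yields the claimed formula $\delta_\tau(\omega_\eta) = \operatorname{Res}_{\mathcal C}\!\bigl(\tfrac{\eta\,\tau}{\sigma^2}\bigr)$.
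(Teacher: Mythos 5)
Your proposal is correct and follows essentially the same route as the paper: expand $\frac{\eta}{\sigma+\varepsilon\tau}$ to first order, use that differentiation commutes with the Poincar\'e residue, and invoke equisingularity ($\tau\in H^0(S,I_Z\otimes L)$) to rule out local correction terms at the singular points. The paper's own proof is a three-line version of exactly this argument, so your additional local-chart computation and gluing discussion simply supply detail the paper leaves implicit (borrowed, as you note, from the proof of Lemma~\ref{lem:variation}).
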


\begin{proof}
Consider the family
\[
\frac{\eta}{\sigma+\varepsilon\tau}
=
\frac{\eta}{\sigma}
-
\varepsilon\frac{\eta\,\tau}{\sigma^2}.
\]
Differentiation commutes with the residue map.
The equisingular hypothesis guarantees that no additional local correction terms
arise at the singular points.
\end{proof}

\subsection*{The Kodaira--Spencer map.}

\begin{proposition}
\label{prop:ks}
Under the residue identifications, the Kodaira--Spencer map
\[
\rho_\varphi\colon H^0(C,N_\varphi)\to H^1(C,T_C)
\]
coincides with the infinitesimal period map.
\end{proposition}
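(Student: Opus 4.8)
The plan is to reduce the statement to the explicit variation formula of Lemma~\ref{lem:var-surface}, following the same pattern as the plane-curve case (Lemma~\ref{lem:jacobian}). For a smooth curve $C$ the cup product $H^1(C,T_C)\otimes H^0(C,K_C)\to H^1(C,T_C\otimes K_C)=H^1(C,\OO_C)$, combined with Serre duality $H^1(C,\OO_C)\cong H^0(C,K_C)^\vee$, is exactly the canonical isomorphism $H^1(C,T_C)\xrightarrow{\sim}\operatorname{Hom}(H^{1,0}(C),H^{0,1}(C))$ recalled in the Preliminaries. By definition, the infinitesimal period map of the equisingular family sends a parameter direction to the contraction operator $\theta_\xi\colon H^{1,0}(C)\to H^{0,1}(C)$ attached to the corresponding Kodaira--Spencer class $\xi$; and via Lemma~\ref{lem:normal-surface} the tangent space to the equisingular linear system at $[\mathcal C]$ is precisely $H^0(C,N_\varphi)$. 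Hence the proposition amounts to the identity $\theta_{\rho_\varphi(\tau)}=\delta_\tau$ for every equisingular direction $\tau\in H^0\bigl(S,I_Z\otimes L\bigr)$.

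First I would describe the Kodaira--Spencer class $\rho_\varphi(\tau)\in H^1(C,T_C)$ concretely via the connecting homomorphism of the normal-sheaf sequence $0\to T_C\to\varphi^*T_S\to N_\varphi\to 0$: on a suitable affine cover of $C$ the embedded deformation $\sigma+\varepsilon\tau$ lifts to local vector fields on $S$, and the differences of these lifts, restricted to $C$, form a \v{C}ech $1$-cocycle with values in $T_C$ representing $\rho_\varphi(\tau)$. Next I would represent $\omega_\eta\in H^0(C,K_C)$ by the Poincar\'e residue of $\eta/\sigma$ (Lemma~\ref{lem:res-surface}) and compute the cup product $\rho_\varphi(\tau)\cup\omega_\eta\in H^1(C,\OO_C)$ by the usual \v{C}ech recipe. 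The key local computation is that, patch by patch, contracting the lifted vector field against $\eta/\sigma$ and taking the residue reproduces the term $\eta\,\tau/\sigma^2$; summing over the cover identifies the resulting class with the Dolbeault class $\operatorname{Res}_{\mathcal C}\!\bigl(\eta\,\tau/\sigma^2\bigr)\in H^{0,1}(C)$ of Lemma~\ref{lem:var-surface}. The equisingularity of $\tau$ is what guarantees that the normalization $C$ is locally constant in the family and that no extra contributions enter at the singular points of $\mathcal C$; the Gorenstein and planar nature of the singularities ensures the residue operations introduce no correction terms, exactly as in Lemmas~\ref{lem:adj-surface}--\ref{lem:var-surface}.

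I expect the main obstacle to be the bookkeeping that matches the \v{C}ech cup-product computation with the analytic derivative of the residue, that is, the compatibility of the Poincar\'e residue with the Gauss--Manin connection of the equisingular family: one must check that differentiating $\eta/(\sigma+\varepsilon\tau)$ at $\varepsilon=0$ and then taking the residue represents the same class in $H^1(C,\OO_C)$ as cup product with $\rho_\varphi(\tau)$. This is the surface analogue of Griffiths' classical residue computation, and once it is in place the remaining steps are formal. Since $\delta_\tau$ produced by Lemma~\ref{lem:var-surface} is by construction the derivative of the Hodge filtration $F^1H^1(C_\varepsilon)$ along the family, the identity $\theta_{\rho_\varphi(\tau)}=\delta_\tau$ is precisely the assertion that $\rho_\varphi$, after the Serre-duality identification, is the infinitesimal period map, which completes the proof.
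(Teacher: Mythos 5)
Your proposal is correct and follows essentially the same route as the paper: both reduce the statement to the variation formula of Lemma~\ref{lem:var-surface} and identify $\delta_\tau$ with contraction against the Kodaira--Spencer class $\rho_\varphi(\tau)$ under the Serre-duality isomorphism $H^1(C,T_C)\cong\operatorname{Hom}(H^{1,0}(C),H^{0,1}(C))$. The paper's own proof is a two-sentence assertion of this identification, whereas you actually supply the \v{C}ech-cocycle description of $\rho_\varphi(\tau)$ and the cup-product computation matching it to the derivative of the residue --- details the paper leaves implicit.
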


\begin{proof}
By definition, $\rho_\varphi$ measures the variation of complex structure of $C$
under deformation.
Lemma~\ref{lem:var-surface} shows that this variation is computed by contraction
of Kodaira--Spencer classes with holomorphic differentials, which is precisely
the infinitesimal period map.
\end{proof}

\subsection*{ Contribution of the ambient surface.}

\begin{lemma}
\label{lem:ambient}
Deformations induced by first-order deformations of $(S,L)$ lie in the kernel of
$\rho_\varphi$.
\end{lemma}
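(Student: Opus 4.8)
The plan is to follow a first-order deformation of the polarized surface $(S,L)$ through the identifications made in Lemmas~\ref{lem:normal-surface}--\ref{lem:var-surface}, and to check that the induced variation of every holomorphic differential on $C$ vanishes; since, by Lemma~\ref{lem:res-surface} together with assumption~(1), those differentials exhaust $H^{1,0}(C)$, this forces the associated class in $H^0(C,N_\varphi)$ into the kernel of $\rho_\varphi$.

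First I would make the relevant map $T_{(S,L)}\to H^0(C,N_\varphi)$ explicit. A first-order deformation of $(S,L)$ is encoded, in the Atiyah fashion, by a \v{C}ech $1$-cocycle valued in the sheaf $\mathcal D^1_S(L)$ of first-order differential operators on $L$, which sits in $0\to\mathcal O_S\to\mathcal D^1_S(L)\to T_S\to 0$. Letting these operators act on a defining section $\sigma\in H^0(S,L)$ of $\mathcal C$ and passing to the coboundary yields the obstruction to extending $\sigma$ over $\mathbb C[\varepsilon]/(\varepsilon^2)$, an element of $H^1(S,L)$; when it vanishes one obtains an extension $\sigma+\varepsilon\tau$ with $\tau\in H^0(S,L)$, well defined modulo $\mathbb C\cdot\sigma$ and modulo the operators in $H^0(S,\mathcal D^1_S(L))$. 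Because this is a deformation of the \emph{pair} $(S,\mathcal C)$, it is equisingular, so in fact $\tau\in H^0(S,I_Z\otimes L)$, and via Lemma~\ref{lem:normal-surface} its class is precisely the image of the chosen deformation in $H^0(C,N_\varphi)$.

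Next I would invoke the variation formula of Lemma~\ref{lem:var-surface}: the action of $\rho_\varphi(\tau)$ on $\omega_\eta=\operatorname{Res}_{\mathcal C}(\eta/\sigma)$ is $\omega_\eta\mapsto\operatorname{Res}_{\mathcal C}(\eta\,\tau/\sigma^2)$. The point is that, by construction, $\tau$ is locally of the form $v(\sigma)+h\cdot\sigma$ for a local vector field $v$ and a local function $h$ arising from the ambient cocycle; hence $\eta\,\tau/\sigma^2$ agrees, modulo a form holomorphic along $\mathcal C$, with $\tfrac{1}{\sigma}\,\mathcal L_v\!\bigl(\tfrac{\eta}{\sigma}\bigr)$-type exact data, so its Poincar\'e residue represents the zero class in $H^{0,1}(C)$. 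This is exactly the surface analogue of the vanishing of the residue pairing on elements of the Jacobian ideal used in Proposition~\ref{prop:injective} and Proposition~\ref{prop:ci-trivial}: infinitesimal motions of the ambient geometry contract trivially against the intrinsic Hodge structure of $C$. Equivalently, and more cleanly for bookkeeping, the induced class lifts to a section of $\varphi^*\mathcal D^1_S(L)$, hence of $\varphi^*T_S$ after absorbing the $\mathcal O_S$-part, and from the long exact sequence of $0\to T_C\to\varphi^*T_S\to N_\varphi\to 0$ one has $\ker\rho_\varphi=\operatorname{im}\!\bigl(H^0(C,\varphi^*T_S)\to H^0(C,N_\varphi)\bigr)$, so $\rho_\varphi$ annihilates it.

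I expect the main obstacle to be the first step: verifying that the map $T_{(S,L)}\to H^0(C,N_\varphi)$ is well defined, i.e.\ that the choices involved---the lift of $\sigma$ over $\mathbb C[\varepsilon]/(\varepsilon^2)$, and the freedom in how $\mathcal C$ moves inside the deforming linear system $|L|$---are absorbed precisely by the quotient by $\mathbb C\cdot\sigma$ and by the subspace of automorphism-type directions. Once this is in place, the residue vanishing is a purely local computation, identical in spirit to the plane-curve case. Note that assumption~(1) is not used in the vanishing itself but only through Lemma~\ref{lem:res-surface}, so as to ensure that ``$\rho_\varphi(\tau)$ acts by zero on every $\omega_\eta$'' is equivalent to ``$\rho_\varphi(\tau)=0$''.
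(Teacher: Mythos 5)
Your proposal is considerably more detailed than the paper's own proof, which simply asserts that a deformation of $(S,L)$ ``induces a deformation of $\mathcal C$ that is trivial after normalization.'' Unfortunately, the step where you actually derive the vanishing contains a genuine gap. You write that $\tau$ is ``locally of the form $v(\sigma)+h\cdot\sigma$ for a local vector field $v$ and a local function $h$ arising from the ambient cocycle,'' and conclude that $\operatorname{Res}_{\mathcal C}(\eta\,\tau/\sigma^2)$ represents zero in $H^{0,1}(C)$. But local exactness does not kill a degree-one cohomology class: the data $(v_i,h_i)$ live on the patches of a cover, and their differences $v_i-v_j$, $h_i-h_j$ are precisely the \v{C}ech cocycle representing the first-order deformation of $(S,L)$ restricted to a neighbourhood of $\mathcal C$. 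Carrying your computation through carefully shows only that $\rho_\varphi$ of the induced class equals the image of that ambient cocycle under the natural map into $H^1(C,T_C)$ (contracted against $\omega_\eta$); the lemma is exactly the assertion that this image vanishes, so nothing has been proved. The analogy you invoke with Proposition~\ref{prop:injective} is where the slip occurs: there $G=a\,\partial_xF+b\,\partial_yF+c\,\partial_zF$ with \emph{global} polynomial coefficients, i.e.\ the vector field is a global section of $T_{\mathbb P^2}$, which is available because $H^1(\mathbb P^2,T_{\mathbb P^2})=0$; on a general surface $S$ the local fields $v_i$ do not glue, and the obstruction to gluing is the very class whose effect you are trying to show is trivial.

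The same problem undermines your ``cleaner bookkeeping'' variant: a first-order deformation of $(S,L)$ is a class in $H^1$ of (an Atiyah extension of) $T_S$, not a global section, so the induced element of $H^0(C,N_\varphi)$ does not in general lift to $H^0(C,\varphi^*T_S)$; the identification $\ker\rho_\varphi=\operatorname{im}\bigl(H^0(C,\varphi^*T_S)\to H^0(C,N_\varphi)\bigr)$ accounts for infinitesimal automorphisms of $S$ along $C$, not for genuine moduli of $(S,L)$. What is actually needed --- and what the paper itself asserts without argument --- is a reason why the composite $T_{(S,L)}\to H^1(S,T_S(-\log\mathcal C))\to H^1(C,T_C)$ vanishes, i.e.\ why carrying $\mathcal C$ along a deformation of the ambient polarized surface does not change the complex structure of the normalization. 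Your first paragraph, on the well-definedness of $T_{(S,L)}\to H^0(C,N_\varphi)$, is a genuine improvement on the paper's treatment, but the heart of the lemma remains unproved in both your write-up and the original.
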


\begin{proof}
A deformation of $(S,L)$ induces a deformation of $\mathcal C$ that is trivial
after normalization.
Hence the induced Kodaira--Spencer class on $C$ vanishes.
\end{proof}

\subsection{Exactness.}

\begin{proposition}
\label{prop:exact}
There is an exact sequence
\[
0\longrightarrow T_{(S,L)}
\longrightarrow H^0(C,N_\varphi)
\stackrel{\rho_\varphi}{\longrightarrow}
H^1(C,T_C).
\]
\end{proposition}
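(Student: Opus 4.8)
The plan is to assemble the exact sequence from the lemmas already established, treating the three arrows in turn. The middle term $H^0(C,N_\varphi)$ has already been identified in Lemma~\ref{lem:normal-surface} with $H^0(S,I_Z\otimes L)/\mathbb{C}\cdot\mathcal C$, and the rightmost arrow $\rho_\varphi$ is the Kodaira--Spencer/infinitesimal period map, identified via residues in Proposition~\ref{prop:ks}. So the content is entirely in describing the injection $T_{(S,L)}\hookrightarrow H^0(C,N_\varphi)$ and proving exactness at the middle term.

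\textbf{Step 1: Construction of the map $T_{(S,L)}\to H^0(C,N_\varphi)$.} First I would recall that a first-order deformation of the polarized pair $(S,L)$, restricted to a neighbourhood of the fixed curve $\mathcal C\in|L|$, induces a first-order embedded deformation of $\mathcal C$ inside the (deformed) surface, hence — after trivializing the family of surfaces to first order, which one may do since we only care about the induced deformation of the abstract curve $C$ — a section of $N_\varphi$. Concretely, a class in $T_{(S,L)}$ gives a way of moving the equation $\sigma$ of $\mathcal C$, i.e. a $\tau\in H^0(S,L)$ well-defined modulo $\mathbb{C}\cdot\sigma$; since the singularities are carried along by an ambient deformation, this $\tau$ automatically lies in $H^0(S,I_Z\otimes L)$, so the assignment lands in $H^0(C,N_\varphi)$ by Lemma~\ref{lem:normal-surface}. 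Injectivity of this map is the statement that a nontrivial deformation of $(S,L)$ cannot induce the trivial deformation of $\mathcal C$ in its equisingular linear system; this should follow from the generality of $\mathcal C\in|L|$ together with the hypothesis $H^1(S,I_Z\otimes(K_S+L))=0$, which controls the adjoint linear system and guarantees that the equisingular deformations of $\mathcal C$ see all of $T_{(S,L)}$ faithfully. A clean way to package Steps 1 is to invoke the long exact cohomology sequence of a suitable normal-sheaf sequence on $S$ (relating $T_S$, $T_S(\log\mathcal C)$ or $T_S|_{\mathcal C}$, and $N_{\mathcal C/S}$, twisted appropriately), whose connecting maps realize both the inclusion $T_{(S,L)}\hookrightarrow H^0(C,N_\varphi)$ and the identification of its cokernel-relevant part.

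\textbf{Step 2: Exactness at $H^0(C,N_\varphi)$.} The inclusion $T_{(S,L)}\subseteq\ker\rho_\varphi$ is exactly Lemma~\ref{lem:ambient}: a deformation coming from $(S,L)$ is trivial after normalization, so its Kodaira--Spencer class on $C$ vanishes. For the reverse inclusion $\ker\rho_\varphi\subseteq T_{(S,L)}$, I would take $\tau\in H^0(S,I_Z\otimes L)$ with $\rho_\varphi(\tau)=0$ and use Proposition~\ref{prop:ks} together with the variation formula of Lemma~\ref{lem:var-surface}: vanishing of $\rho_\varphi(\tau)$ means that for every $\eta\in H^0(S,K_S+L)$ the residue class $\operatorname{Res}_{\mathcal C}(\eta\tau/\sigma^2)\in H^{0,1}(C)$ is zero. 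Because $C$ satisfies infinitesimal Torelli by hypothesis~(2), vanishing of the whole infinitesimal period map forces the corresponding abstract Kodaira--Spencer class in $H^1(C,T_C)$ to be zero; by the deformation-theoretic dictionary (the normal-sheaf sequence $0\to T_{\mathcal C}\to T_S|_{\mathcal C}\to N_{\mathcal C/S}\to 0$ and its relation to deformations of the pair), a class in $H^0(C,N_\varphi)$ mapping to $0$ in $H^1(C,T_C)$ must come from $H^0(\mathcal C,T_S|_{\mathcal C})$, and the piece of this preserving the polarization and the equisingular structure is precisely $T_{(S,L)}$. Finally I would remark that hypothesis~(1), $H^1(S,I_Z\otimes(K_S+L))=0$, is what guarantees that the adjoint/residue description of $H^0(C,K_C)$ in Lemma~\ref{lem:res-surface} is complete — i.e. that \emph{all} holomorphic differentials on $C$ are seen by ambient sections — so that the vanishing "for every $\eta$" really does detect the full Kodaira--Spencer class; without it the argument in Step~2 would only control a subspace of $H^{0,1}(C)$.

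\textbf{Main obstacle.} The routine part is differentiating residues and bookkeeping with linear systems; the delicate point is the identification of $\ker\rho_\varphi$ with $T_{(S,L)}$ rather than with some possibly larger space — one must be careful that every class in $H^0(C,N_\varphi)$ killed by the period map genuinely integrates to an honest deformation of the polarized surface (not merely of an abstract first-order thickening), and that no obstruction intervenes at first order. This is where the generality of $\mathcal C$ in its equisingular linear system and the vanishing $H^1(S,I_Z\otimes(K_S+L))=0$ do the real work: they ensure the natural map from deformations of $(S,L)$ to equisingular deformations of $\mathcal C$ has the expected image, so that the connecting homomorphism in the normal-sheaf long exact sequence has exactly the kernel and image needed. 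The final sentence of the theorem — $\rho_\varphi$ injective iff $(S,L)$ infinitesimally rigid — is then immediate, since $\ker\rho_\varphi=0$ exactly when $T_{(S,L)}=0$.
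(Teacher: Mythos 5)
Your proposal follows essentially the same route as the paper's own (very terse) proof: one inclusion of the kernel is Lemma~\ref{lem:ambient}, the identification of the middle term is Lemma~\ref{lem:normal-surface} together with the residue computation of $\rho_\varphi$, and the reverse inclusion $\ker\rho_\varphi\subseteq T_{(S,L)}$ is delegated to the infinitesimal Torelli hypothesis on $C$. Your write-up is in fact considerably more explicit than the paper's three-sentence argument, and correctly isolates the genuinely delicate point --- that the kernel is exactly $T_{(S,L)}$ and nothing larger --- which the paper dispatches with the single remark that infinitesimal Torelli ``ensures that no further kernel appears.''
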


\begin{proof}
Injectivity on the left follows from Lemma~\ref{lem:ambient}.
Surjectivity onto the image follows from the identification of equisingular
deformations via Lemma~\ref{lem:normal-surface} and the residue computation of
the Kodaira--Spencer map.
The hypothesis that $C$ satisfies infinitesimal Torelli ensures that no further
kernel appears.
\end{proof}

\subsection{Conclusion.}
Proposition~\ref{prop:exact} proves Theorem~\ref{thm:relative}.
The kernel of the Kodaira--Spencer map is exactly the space of ambient
deformations, and injectivity holds if and only if $(S,L)$ is infinitesimally
rigid.

\begin{remark}
Conceptually, this theorem shows that maximal IVHS for equisingular curves on a
surface is an \emph{intrinsically relative phenomenon}: the only obstruction to
injectivity comes from the geometry of the ambient surface itself.
\end{remark}
 
 %%%%%%%%%%%%%%%%%%%%%%%%%%%%%%%%%%%%%%%%%%%%%%%%%%%%%%%%%%%%%%%%%%%%%%%%%%%
%%%%%%%%%%%%%%%%%%%%%%%%%%%%%%%%%%%%%%%%%%%%%%%%%%%%%%%%%%%%%%%%%%%%%%%%%%%
 %%%%%%%%%%%%%%%%%%%%%%%%%%%%%%%%%%%%%%%%%%%%%%%%%%%%%%%%%%%%%%%%%%%%%%%%%%%
%%%%%%%%%%%%%%%%%%%%%%%%%%%%%%%%%%%%%%%%%%%%%%%%%%%%%%%%%%%%%%%%%%%%%%%%%%%

\section{The Residue Exact Sequence}
 
Let $X$ be a smooth algebraic variety of dimension $n$, and let
$D \subset X$ be a reduced effective divisor.
The so--called \emph{residue exact sequence} is a fundamental tool in Hodge
theory, deformation theory, and the theory of logarithmic forms.
As often stated, the sequence
\[
0 \longrightarrow \Omega_X^p
\longrightarrow \Omega_X^p(\log D)
\stackrel{\mathrm{Res}}{\longrightarrow}
\Omega_D^{p-1}
\longrightarrow 0
\]
is \emph{not always correct} without additional assumptions on the divisor $D$.
In this section we explain why the naive statement fails in general, and 
 give the correct formulation with its proof using residue theory.
 Throughout, we work over an algebraically closed field of characteristic zero.

\subsection*{Logarithmic differential forms.}

\begin{definition}
The sheaf of logarithmic $p$-forms along $D$, denoted $\Omega_X^p(\log D)$, is the
subsheaf of $\Omega_X^p(D)$ consisting of meromorphic forms $\omega$ such that
  both $\omega$ and $d\omega$ have at most simple poles along $D$.
 \end{definition}

If $D$ is singular, the sheaf $\Omega_D^{p-1}$ is generally not locally free and
does \emph{not} correctly capture residues of logarithmic forms.
In particular, surjectivity of the residue map fails if one targets
$\Omega_D^{p-1}$.

\begin{theorem}[Residue Exact Sequence]
\label{thm:residue}
Let $X$ be a smooth variety and $D \subset X$ a reduced divisor.
Then there is a natural short exact sequence
\[
0
\longrightarrow
\Omega_X^p
\longrightarrow
\Omega_X^p(\log D)
\stackrel{\operatorname{Res}}{\longrightarrow}
\omega_D^{p-1}
\longrightarrow
0,
\]
where $\omega_D^{p-1}$ denotes the $(p-1)$-st graded piece of the dualizing
complex of $D$.
\end{theorem}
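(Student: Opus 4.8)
The plan is to construct the sequence locally and then glue, being careful that the third term is the dualizing sheaf (or its graded pieces) rather than the naive sheaf of Kähler differentials on $D$. First I would recall the two well-understood cases. When $D$ is smooth, the classical Poincaré residue gives the short exact sequence with $\Omega_D^{p-1}$ on the right, and since a smooth divisor is in particular Gorenstein with $\omega_D^{p-1}=\Omega_D^{p-1}$, this is a special case of the asserted statement. When $D$ has normal crossings, one iterates: locally $D=\{z_1\cdots z_k=0\}$, a logarithmic $p$-form is a sum of terms $\frac{dz_{i_1}}{z_{i_1}}\wedge\cdots\wedge\frac{dz_{i_s}}{z_{i_s}}\wedge\alpha$ with $\alpha$ holomorphic, and the residue reads off the components with a $\frac{dz_i}{z_i}$ factor; here the correct target is $\bigoplus$ of pushforwards from the components, which again is $\omega_D^{p-1}$ because a normal crossing divisor is Gorenstein and its dualizing sheaf is computed by this combinatorial formula.

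Next I would treat the general reduced divisor $D$. The key is local duality: for a reduced hypersurface $D=\{f=0\}$ in the smooth variety $X$, adjunction gives $\omega_D = \omega_X\otimes\mathcal O_X(D)|_D$, and more generally the graded pieces $\omega_D^{p-1}$ of the dualizing complex are governed by the two-term complex $[\Omega_X^{p-1}(D)\to\Omega_X^p(D)]|_D$ arising from wedging with $df/f$. Concretely, I would define the residue map $\operatorname{Res}\colon\Omega_X^p(\log D)\to\omega_D^{p-1}$ by writing a logarithmic form locally as $\frac{df}{f}\wedge\beta+\gamma$ with $\beta,\gamma$ holomorphic (possible since the pole is simple and $d\omega$ also has a simple pole) and sending it to the class of $\beta|_D$ in $\omega_D^{p-1}$; the constraint on $d\omega$ is exactly what forces $\beta|_D$ to define a section of the dualizing sheaf rather than merely of $\Omega_D^{p-1}$. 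Injectivity of $\Omega_X^p\hookrightarrow\Omega_X^p(\log D)$ is clear, and the kernel of $\operatorname{Res}$ consists of logarithmic forms with vanishing residue, which one checks are precisely the holomorphic $p$-forms (a logarithmic form with no $\frac{df}{f}$-component and simple pole is holomorphic, by reducedness of $D$). Surjectivity onto $\omega_D^{p-1}$ is the substantive point: every section of the dualizing sheaf, represented via the adjunction/residue presentation by a suitable $\beta$, lifts to a logarithmic form $\frac{df}{f}\wedge\tilde\beta$, and one verifies this lift indeed has logarithmic poles (i.e.\ $d$ of it has a simple pole) using that $\beta|_D$ lies in $\omega_D^{p-1}$ and not merely $\Omega_D^{p-1}$.

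The main obstacle I anticipate is precisely this surjectivity at singular points of $D$, where $\Omega_D^{p-1}$ and $\omega_D^{p-1}$ genuinely differ: one must show that the residue map hits the \emph{whole} dualizing sheaf and that no "extra" logarithmic forms exist beyond those accounted for. I would handle this by a local computation on the normalization or, more cleanly, by the local cohomology / Grothendieck duality description of $\omega_D^{p-1}$ as $\mathcal{E}xt^1_{\mathcal O_X}(\mathcal O_D,\Omega_X^p)$ together with the long exact sequence obtained from $0\to\Omega_X^p\to\Omega_X^p(D)\to\Omega_X^p(D)|_D\to 0$; identifying the logarithmic subsheaf inside $\Omega_X^p(D)$ as the preimage of the "residue-admissible" part then yields the sequence with $\omega_D^{p-1}$ automatically, since that $\mathcal{E}xt$ is by construction the correct target. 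A secondary subtlety is checking naturality and independence of the local defining equation $f$: replacing $f$ by $uf$ with $u$ a unit changes $\frac{df}{f}$ by the holomorphic form $\frac{du}{u}$, which does not affect the residue class, so the map glues; I would also remark that when $D$ is smooth or normal crossing the sequence specializes to the familiar ones, which serves as a consistency check and closes the argument.
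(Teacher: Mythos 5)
Your proposal is correct and follows essentially the same route as the paper: define the residue locally by writing a logarithmic form as $\frac{df}{f}\wedge\beta+\gamma$ and sending it to $\beta|_D$, check independence of the defining equation via $\frac{d(uf)}{uf}=\frac{df}{f}+\frac{du}{u}$, identify the kernel with $\Omega_X^p$, and obtain surjectivity onto $\omega_D^{p-1}$ from Grothendieck duality. Your more explicit handling of surjectivity via the $\mathcal{E}xt^1_{\mathcal{O}_X}(\mathcal{O}_D,\Omega_X^p)$ presentation of the dualizing sheaf, and your remark that the condition on $d\omega$ is what places the residue in $\omega_D^{p-1}$ rather than $\Omega_D^{p-1}$, are welcome elaborations of the step the paper treats most briefly, but they do not constitute a different argument.
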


If $D$ is smooth, then $\omega_D^{p-1} \cong \Omega_D^{p-1}$ and the classical
form of the sequence is recovered.

\subsection*{Construction of the Residue Map.}
{\it Local description.}
Let $U \subset X$ be an open set such that $D \cap U$ is defined by a single
equation $f=0$.
Then locally:
\[
\Omega_X^p(\log D)|_U
=
\Omega_X^p|_U
\;\oplus\;
\frac{df}{f} \wedge \Omega_X^{p-1}|_U.
\]

\begin{definition}
The residue map is defined locally by
\[
\operatorname{Res}\!\left(\frac{df}{f}\wedge\alpha + \beta\right)
:= \alpha|_D,
\]
where $\alpha \in \Omega_X^{p-1}$ and $\beta \in \Omega_X^p$.
\end{definition}

\begin{lemma}
The residue map is independent of the choice of local defining equation $f$.
\end{lemma}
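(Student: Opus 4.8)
The plan is to reduce the whole statement to the transformation law of the logarithmic pole form $\tfrac{df}{f}$ under a change of local equation, and to observe that the only ambiguity in writing a logarithmic form in the normal form $\tfrac{df}{f}\wedge\alpha+\beta$ is killed upon restriction to $D$. First I would recall that any two reduced local defining equations $f,g$ for $D$ on an open set $U$ differ by an invertible function, $g=uf$ with $u\in\mathcal{O}_X^{\times}(U)$. The single computational input is the logarithmic derivative identity
\[
\frac{dg}{g}=\frac{df}{f}+\frac{du}{u},
\]
in which $\tfrac{du}{u}=d\log u$ is a \emph{regular} (pole-free) holomorphic $1$-form on all of $U$, precisely because $u$ is a unit.

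Next, given a section $\omega\in\Omega_X^p(\log D)|_U$, I would invoke the local decomposition recalled before the lemma to write $\omega=\tfrac{df}{f}\wedge\alpha+\beta$ with $\alpha\in\Omega_X^{p-1}|_U$ and $\beta\in\Omega_X^p|_U$, so that by definition $\operatorname{Res}_f(\omega)=\alpha|_D$. Substituting the identity above yields
\[
\omega=\frac{dg}{g}\wedge\alpha+\Bigl(\beta-\frac{du}{u}\wedge\alpha\Bigr),
\]
and since $\beta-\tfrac{du}{u}\wedge\alpha$ is again a regular $p$-form on $U$, this is a bona fide decomposition of $\omega$ relative to the defining equation $g$. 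Applying the definition of the residue a second time gives $\operatorname{Res}_g(\omega)=\alpha|_D=\operatorname{Res}_f(\omega)$, which is the assertion. This local independence is exactly what is needed for the locally defined residue maps to glue into a global morphism $\operatorname{Res}\colon\Omega_X^p(\log D)\to\omega_D^{p-1}$, as required in Theorem~\ref{thm:residue}.

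The one point demanding care — and the only plausible obstacle — is that $\alpha$ is not uniquely determined by $\omega$: it may be modified by any element of $df\wedge\Omega_X^{p-2}|_U$, since $\tfrac{df}{f}\wedge df=0$. I would therefore check that $\alpha|_D$ is nonetheless well defined in the target of the residue map. This holds because $df$ restricted to $D$ maps to zero in $\Omega_D^1$ (it lies in the conormal part), so $df\wedge\Omega_X^{p-2}$ maps to $0$ in $\omega_D^{p-1}$; when $D$ is singular one either works first on the smooth locus $D^{\mathrm{sm}}$, where $\Omega_D^{p-1}$ behaves well, and extends using torsion-freeness of $\omega_D^{p-1}$, or interprets $\alpha|_D$ directly through the dualizing complex. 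Beyond this bookkeeping the proof is entirely formal.
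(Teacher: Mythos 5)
Your argument is correct and is essentially the paper's own proof: both rest on the identity $\tfrac{dg}{g}=\tfrac{df}{f}+\tfrac{du}{u}$ for $g=uf$ and the observation that the holomorphic term $\tfrac{du}{u}$ contributes nothing to the residue. Your additional check that $\alpha|_D$ is well defined despite the ambiguity of $\alpha$ modulo $df\wedge\Omega_X^{p-2}$ is a welcome refinement that the paper only addresses implicitly elsewhere, but it does not change the route of the proof.
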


\begin{proof}
If $f'=u f$ for a unit $u$, then
\[
\frac{df'}{f'} = \frac{df}{f} + \frac{du}{u}.
\]
Since $\dfrac{du}{u}$ is holomorphic, it contributes no residue.
Thus the definition is independent of $f$.
\end{proof}

\subsection*{\it Injectivity on the left.}

\begin{lemma}
The natural inclusion
\[
\Omega_X^p \hookrightarrow \Omega_X^p(\log D)
\]
is injective.
\end{lemma}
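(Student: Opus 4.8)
The plan is to observe that the asserted injectivity is essentially formal, once one recognizes $\Omega_X^p(\log D)$ as a subsheaf of $\Omega_X^p(D)$ and checks that holomorphic forms manifestly satisfy the logarithmic pole condition. So the first step is to record that the natural map $\Omega_X^p \to \Omega_X^p(\log D)$ is well defined at all: a holomorphic $p$-form $\omega$ has no poles along $D$, and neither does $d\omega$, so both have poles of order at most one, which is precisely the defining condition of $\Omega_X^p(\log D)$ from the definition above. Hence the inclusion $\Omega_X^p \subset \Omega_X^p(D)$ of subsheaves of meromorphic $p$-forms factors through $\Omega_X^p(\log D)$, and it suffices to prove that $\Omega_X^p \hookrightarrow \Omega_X^p(D)$ is injective and then co-restrict.

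The second step is to identify $\Omega_X^p(D)$ with $\Omega_X^p \otimes_{\mathcal{O}_X} \mathcal{O}_X(D)$, under which the inclusion becomes $\mathrm{id}\otimes s_D$, where $s_D\colon \mathcal{O}_X \to \mathcal{O}_X(D)$ is multiplication by the canonical section cutting out $D$. Locally, $D\cap U=\{f=0\}$ for some $f\in\mathcal{O}_X(U)$, and this map is multiplication by $f$. Since $X$ is smooth, each local ring $\mathcal{O}_{X,x}$ is regular, hence an integral domain, so $f$ is a nonzerodivisor and $\mathcal{O}_X \xrightarrow{\cdot f} \mathcal{O}_X(D)$ is injective. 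Smoothness also gives that $\Omega_X^p$ is locally free, hence flat, so tensoring with it preserves the injection. Restricting the target to the subsheaf $\Omega_X^p(\log D)\subset\Omega_X^p(D)$ keeps the map injective, which is exactly the claim
\[
0 \longrightarrow \Omega_X^p \longrightarrow \Omega_X^p(\log D).
\]

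There is no genuine obstacle here; the statement is a formality, and the only point requiring even minimal care is the well-definedness of the map into the logarithmic subsheaf, i.e.\ confirming that ``no pole'' is a special case of ``pole of order $\le 1$'' for both $\omega$ and $d\omega$. The substantive content of the residue exact sequence of Theorem~\ref{thm:residue} lies entirely in the surjectivity of $\operatorname{Res}$ onto $\omega_D^{p-1}$ and in exactness in the middle term, not in this left-hand injectivity, which is merely the bookkeeping statement that a holomorphic form which becomes zero as a logarithmic form was already zero.
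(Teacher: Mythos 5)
Your proof is correct and takes essentially the same route as the paper, which simply remarks that the inclusion follows from the definition since holomorphic forms have no poles; you have merely spelled out the details (well-definedness of the map into the logarithmic subsheaf, and injectivity into $\Omega_X^p(D)$ via the local defining equation being a nonzerodivisor and $\Omega_X^p$ being locally free). Your closing observation that the real content of Theorem~\ref{thm:residue} lies in exactness in the middle and surjectivity of $\operatorname{Res}$ is also consistent with how the paper treats this lemma.
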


\begin{proof}
This follows from the definition: holomorphic forms have no poles and therefore
define logarithmic forms uniquely.
\end{proof}

\subsection*{Exactness in the middle}

\begin{lemma}
The kernel of the residue map is precisely $\Omega_X^p$.
\end{lemma}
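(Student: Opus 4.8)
The plan is to check the equality of sheaves $\ker(\operatorname{Res}) = \Omega_X^p$ stalkwise, so I would first pass to a local chart $U \subseteq X$ on which the reduced divisor $D$ is cut out by a single equation $f = 0$; such a chart exists because $D$ is a Cartier divisor on the smooth variety $X$. On $U$ I would invoke the local splitting recalled above, writing an arbitrary logarithmic form as $\omega = \frac{df}{f}\wedge\alpha + \beta$ with $\alpha \in \Omega_X^{p-1}(U)$ and $\beta \in \Omega_X^p(U)$, for which $\operatorname{Res}(\omega) = \alpha|_D$ by definition. The inclusion $\Omega_X^p \subseteq \ker(\operatorname{Res})$ is then immediate: a holomorphic $p$-form is represented with $\alpha = 0$, so its residue vanishes, and combined with the injectivity of $\Omega_X^p \hookrightarrow \Omega_X^p(\log D)$ already established, this exhibits $\Omega_X^p$ as a subsheaf of the kernel.

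For the reverse inclusion I would argue as follows. Suppose $\operatorname{Res}(\omega) = 0$. Since $\Omega_X^{p-1}$ is locally free and $f$ is a nonzerodivisor on $\mathcal O_X$, the only ambiguity in the decomposition is that $\alpha$ may be altered by elements of $f\,\Omega_X^{p-1}$ (absorbed into $\beta$) and of $df\wedge\Omega_X^{p-2}$ (which contribute $0$ both to $\frac{df}{f}\wedge\alpha$ and to $\alpha|_D$). Hence the vanishing of the residue forces $\alpha \in f\,\Omega_X^{p-1}(U) + df\wedge\Omega_X^{p-2}(U)$, say $\alpha = f\gamma + df\wedge\delta$. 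Then $\frac{df}{f}\wedge\alpha = df\wedge\gamma$ is holomorphic, so $\omega = df\wedge\gamma + \beta \in \Omega_X^p(U)$. Because the residue map has already been shown to be independent of the choice of $f$, these local computations are compatible on overlaps and patch to the global statement $\ker(\operatorname{Res}) = \Omega_X^p$.

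The step I expect to require the most care is the behavior over the singular locus $\operatorname{Sing}(D)$, where the target $\omega_D^{p-1}$ of the residue map is the graded piece of the dualizing complex rather than the sheaf of Kähler differentials $\Omega_D^{p-1}$, and the natural map $\Omega_X^{p-1}|_D \to \omega_D^{p-1}$ need not be injective, so that the passage ``$\operatorname{Res}(\omega)=0 \Rightarrow \alpha|_D = 0$'' is not formally automatic there. To circumvent this I would first prove the kernel computation over $D^{\circ} := D \setminus \operatorname{Sing}(D)$, where $D$ is smooth and the classical residue exact sequence applies verbatim, and then extend the conclusion across $\operatorname{Sing}(D)$: since $D$ is reduced, $\operatorname{Sing}(D)$ has codimension at least $2$ in $X$, while $\Omega_X^p$ is reflexive (indeed locally free) and $\Omega_X^p(\log D)$ is torsion-free, being a subsheaf of $\Omega_X^p(D)$; hence a section of $\Omega_X^p(\log D)$ over $U$ that already lies in $\Omega_X^p$ over $U \setminus \operatorname{Sing}(D)$ must lie in $\Omega_X^p$ over all of $U$ by a Hartogs-type extension. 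This reduces the whole statement to the smooth case plus a normality argument and avoids any delicate analysis of $\omega_D^{p-1}$ near the singularities.
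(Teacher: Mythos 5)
Your proposal is correct, and its core is the same local computation as the paper's: write $\omega = \frac{df}{f}\wedge\alpha+\beta$, observe that vanishing of the residue forces $\alpha$ to die on $D$, and conclude that $\frac{df}{f}\wedge\alpha$ is in fact holomorphic. Where you genuinely diverge is in the two points you flag as delicate, and both are real improvements. First, you correctly note that the decomposition is not unique --- $\alpha$ is only determined modulo $f\,\Omega_X^{p-1}+df\wedge\Omega_X^{p-2}$ --- whereas the paper treats the splitting as if it were a direct sum and passes from $\alpha|_D=0$ directly to $\alpha=f\gamma$; your version, allowing $\alpha=f\gamma+df\wedge\delta$ and observing that the extra term contributes nothing, is the honest statement. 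Second, and more substantially, the paper's one-line assertion that $\operatorname{Res}(\omega)=0$ if and only if $\alpha|_D=0$ is exactly the step that is not automatic over $\operatorname{Sing}(D)$, since the target is $\omega_D^{p-1}$ rather than $\Omega_D^{p-1}$ and the comparison map need not be injective there; the paper's proof silently ignores this. Your workaround --- establish the kernel computation over the smooth locus of $D$, where the classical conormal-sequence argument applies, and then extend across $\operatorname{Sing}(D)$ using that it has codimension at least $2$ in $X$, that $\Omega_X^p$ is locally free on the normal variety $X$, and that $\Omega_X^p(D)$ is torsion-free --- is sound and buys a proof that is actually valid for singular reduced $D$, which is the case the section is explicitly concerned with. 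In short, you prove the same lemma by the same mechanism but patch a gap in the paper's own argument.
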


\begin{proof}
Let $\omega \in \Omega_X^p(\log D)$.
Locally write
\[
\omega = \frac{df}{f}\wedge\alpha + \beta.
\]
Then $\operatorname{Res}(\omega)=0$ if and only if $\alpha|_D=0$.
This implies $\alpha = f\gamma$ for some $\gamma$, hence
\[
\omega = df \wedge \gamma + \beta \in \Omega_X^p.
\]
\end{proof}

\subsection*{\it Surjectivity.}

\begin{proposition}
The residue map
\[
\operatorname{Res}\colon \Omega_X^p(\log D)\to \omega_D^{p-1}
\]
is surjective.
\end{proposition}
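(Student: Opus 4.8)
The plan is to prove surjectivity locally and then globalize. Since the statement is about sheaves, it suffices to work in a neighborhood of an arbitrary point $x \in D$ (away from $D$ the map is trivially surjective, as $\omega_D^{p-1}$ is supported on $D$). So fix $x \in D$ and choose a local equation $f$ for $D$ near $x$. The first step is to recall the description of the dualizing complex: by the theory of dualizing complexes for a hypersurface $D = V(f) \subset X$, one has a concrete Poincar\'e-residue-type presentation of $\omega_D^{\bullet}$ as the residue of meromorphic forms on $X$ with poles along $D$, namely every local section of $\omega_D^{p-1}$ is represented by an expression $\operatorname{Res}_D\!\left(\dfrac{df}{f} \wedge \eta\right)$ for some local $\eta \in \Omega_X^{p-1}$ — this is exactly the Grothendieck/Hartshorne description of the dualizing sheaf of a hypersurface via residues (see Hartshorne~\cite{HartshorneRD}). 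Given such a representative, the preimage under $\operatorname{Res}$ is simply the logarithmic form $\dfrac{df}{f} \wedge \eta$ itself: one must only check that it is a genuine section of $\Omega_X^p(\log D)$, i.e. that both it and its exterior derivative have at most simple poles along $D$, which is immediate since $d\!\left(\dfrac{df}{f} \wedge \eta\right) = -\dfrac{df}{f} \wedge d\eta$ again has a simple pole.

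\begin{proof}
The question is local along $D$, since both sheaves in question restrict to zero away from $D$ on the target side and the map is an isomorphism there on the source side modulo $\Omega_X^p$. Let $x \in D$ and choose an affine open $U \ni x$ on which $D \cap U = V(f)$ for a single equation $f$. By the Grothendieck--Hartshorne description of the dualizing complex of a hypersurface (\cite{HartshorneRD}, \cite{GriffithsResidues}), every local section of $\omega_D^{p-1}$ on $U$ is of the form
\[
\operatorname{Res}_D\!\left(\frac{df}{f}\wedge\eta\right),
\qquad
\eta \in \Omega_X^{p-1}(U).
\]
Now $\dfrac{df}{f}\wedge\eta \in \Omega_X^p(D)(U)$ has a simple pole along $D$, and
\[
d\!\left(\frac{df}{f}\wedge\eta\right)
=
-\frac{df}{f}\wedge d\eta
\]
again has at most a simple pole along $D$; hence $\dfrac{df}{f}\wedge\eta$ is a section of $\Omega_X^p(\log D)(U)$. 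By the local formula for $\operatorname{Res}$ established above, its residue is $\operatorname{Res}_D\!\left(\frac{df}{f}\wedge\eta\right)$, the chosen section. Thus $\operatorname{Res}$ is surjective on $U$. Since this holds on an open cover of $X$, the residue map is surjective as a map of sheaves.
\end{proof}

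The only subtlety — and the point where the argument genuinely uses that the target is $\omega_D^{p-1}$ rather than $\Omega_D^{p-1}$ — is the first step, the presentation of the dualizing complex by residues. For smooth $D$ this is the classical adjunction statement $\omega_D \cong (\omega_X \otimes \mathcal{O}_X(D))|_D$ together with the identification of $\Omega_D^{p-1}$ with forms on $X$ modulo $f$ and $df$; for singular $D$ one invokes instead that $D$ is Gorenstein of pure codimension one in the smooth variety $X$, so that $\omega_D = \mathcal{H}om(\mathcal{O}_D, \omega_X[1])$ is computed by the Koszul resolution of $\mathcal{O}_D$ and yields precisely the residue presentation. I expect this invocation of the structure of $\omega_D^{p-1}$ to be the main obstacle to writing a fully self-contained proof, since it is really where the failure of the naive sequence is absorbed — the graded pieces of the dualizing complex are defined so as to make $\operatorname{Res}$ surjective, and the content is checking that the naive residue formula lands correctly in them. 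Everything downstream (that the candidate preimage is logarithmic, that $d$ of it is still logarithmic) is a one-line computation as above.
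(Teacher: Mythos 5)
Your proof is correct and follows essentially the same route as the paper's: both invoke the Grothendieck--Hartshorne residue presentation of $\omega_D^{p-1}$ as the key (cited, not reproved) input, and both exhibit $\dfrac{df}{f}\wedge\eta$ as an explicit logarithmic preimage. Your added check that $d\bigl(\frac{df}{f}\wedge\eta\bigr)=-\frac{df}{f}\wedge d\eta$ still has only a simple pole is a small but welcome elaboration of a step the paper leaves implicit.
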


\begin{proof}
By Grothendieck duality, $\omega_D^{p-1}$ is locally generated by residues of
meromorphic $p$-forms with simple poles along $D$.
Given a local section $\eta \in \omega_D^{p-1}$, choose a lift
$\alpha \in \Omega_X^{p-1}$.
Then $\dfrac{df}{f}\wedge\alpha$ defines a logarithmic form whose residue equals
$\eta$ (see. \cite{SGA2} and \cite{HartshorneRD}).
\end{proof}

\subsection*{\it Proof of Theorem~\ref{thm:residue}.}

Combining the  injectivity of $\Omega_X^p \to \Omega_X^p(\log D)$, the identification of $\ker(\operatorname{Res})$ with $\Omega_X^p$, and surjectivity of the residue map we obtain the exact sequence:
\[
0
\longrightarrow
\Omega_X^p
\longrightarrow
\Omega_X^p(\log D)
\stackrel{\operatorname{Res}}{\longrightarrow}
\omega_D^{p-1}
\longrightarrow
0.
\]
\qed

\subsection*{Special Case: Smooth Divisors.}

\begin{corollary}
If $D$ is smooth, then $\omega_D^{p-1} \cong \Omega_D^{p-1}$ and the residue exact
sequence becomes
\[
0
\longrightarrow
\Omega_X^p
\longrightarrow
\Omega_X^p(\log D)
\longrightarrow
\Omega_D^{p-1}
\longrightarrow
0.
\]
\end{corollary}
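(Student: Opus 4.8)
The plan is to deduce the corollary directly from Theorem~\ref{thm:residue} by verifying that its target term $\omega_D^{p-1}$ collapses to the ordinary sheaf of differential $(p-1)$-forms $\Omega_D^{p-1}$ once $D$ is smooth. First I would recall that a smooth divisor $D \subset X$ is a smooth variety of dimension $n-1$, so its sheaf of K\"ahler differentials $\Omega_D^1$ is locally free of rank $n-1$ and each exterior power $\Omega_D^{p-1} = \bigwedge^{p-1}\Omega_D^1$ is locally free; moreover the dualizing complex of $D$ is concentrated in a single cohomological degree, namely the shifted canonical sheaf $\Omega_D^{n-1}[n-1]$. Consequently the graded constituents $\omega_D^{q}$ of the complex of regular differential forms appearing in Theorem~\ref{thm:residue} --- which by construction agree with $\Omega_D^{q}$ on the smooth locus of $D$ and are defined to extend these across singularities via duality --- coincide with $\Omega_D^{q}$ everywhere when $D$ has no singular points. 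Taking $q = p-1$ gives the canonical identification $\omega_D^{p-1} \cong \Omega_D^{p-1}$.

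Substituting this identification into the exact sequence of Theorem~\ref{thm:residue} then yields
\[
0 \longrightarrow \Omega_X^p \longrightarrow \Omega_X^p(\log D) \stackrel{\operatorname{Res}}{\longrightarrow} \Omega_D^{p-1} \longrightarrow 0,
\]
which is precisely the assertion of the corollary. As a consistency check I would reprove surjectivity of $\operatorname{Res}$ directly in the smooth case, where it becomes elementary: locally $D = \{f = 0\}$ with $df$ nowhere vanishing along $D$, so one may complete $f$ to a system of local coordinates $f = x_1, x_2, \dots, x_n$ on $X$. Then $\Omega_X^{p-1}$ is free on the wedges $dx_I$ with $|I| = p-1$, and under restriction to $D$ those multi-indices $I$ containing $1$ map to zero while the remaining $dx_I$ map to a local frame of $\Omega_D^{p-1}$. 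Hence every local $(p-1)$-form on $D$ is the residue of $\tfrac{df}{f}\wedge\alpha$ for a suitable lift $\alpha \in \Omega_X^{p-1}$, recovering surjectivity onto $\Omega_D^{p-1}$ without appeal to Grothendieck duality.

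The only genuinely delicate point is the identification $\omega_D^{p-1} \cong \Omega_D^{p-1}$ for smooth $D$ --- that is, pinning down in what sense the ``graded piece of the dualizing complex'' used in Theorem~\ref{thm:residue} reduces to the usual sheaf of $(p-1)$-forms, where one must be careful about shifts and the distinction between the dualizing complex proper and the associated complex of regular differential forms. Once this bookkeeping is settled --- and it is unconditional, requiring nothing beyond local freeness of $\Omega_D^\bullet$ and smoothness of $D$ --- the corollary is immediate. I expect no further obstacles: injectivity on the left and exactness in the middle are inherited verbatim from Theorem~\ref{thm:residue}, being purely local statements that do not see the global or singular structure of $D$.
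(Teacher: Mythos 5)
Your proposal is correct and follows essentially the same route as the paper, which deduces the corollary from Theorem~\ref{thm:residue} by observing that for smooth $D$ the graded piece $\omega_D^{p-1}$ of the dualizing complex coincides with the locally free sheaf $\Omega_D^{p-1}$. Your additional direct verification of surjectivity via local coordinates $f = x_1, \dots, x_n$ is a worthwhile consistency check but does not change the argument.
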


\subsection*{Conclusion.}

The commonly stated residue exact sequence is correct only after replacing
$\Omega_D^{p-1}$ by the appropriate dualizing sheaf when $D$ is singular.
Residue theory provides both the correct formulation and a natural proof.

%%%%%%%%%%%%%%%%%%%%%%%%%%%%%%%%%%%%%%%%%%%%%%%%%%%%%%%%%%%%%%%%%%%%%%%%%      %%%%%%%%%%%%%%%%%%%%%%%%%%%%%%%%%%%%%%%%%%%%%%%%%%%%%%%%%%%%%%%%%%%%%%%%%

\section{Relative IVHS exact sequence for threefolds}

The study of infinitesimal variations of Hodge structure (IVHS) for families of
curves lying on higher-dimensional varieties plays a central role in modern
Torelli-type problems.  In particular, when a smooth curve arises as a member of
a linear system on a threefold, one expects a close relationship between
deformations of the ambient variety, embedded deformations of the curve, and
abstract deformations of the curve itself.  This relationship is typically
encoded in a \emph{relative IVHS exact sequence}, which measures how much of the
Hodge-theoretic variation of the curve is induced by the geometry of the
threefold.

The theorem stated below aims to formalize such a relationship in the setting of
threefolds, combining logarithmic deformation theory, Kodaira--Spencer maps, and
residue constructions.  The purpose of this discussion is to assess whether the
statement, as written, is mathematically correct and to clarify the precise
sense in which it holds.

Let  $X$ be a smooth projective threefold over an algebraically closed field of
characteristic zero, and $L$ be a line bundle on $X$ such that $L-K_X$ is big and nef,  Let $\mathcal C \in |L|$ a reduced irreducible curve with isolated
locally complete intersection singularities, and   $\nu \colon C \to \mathcal C$ the normalization. Let  $\varphi := i\circ\nu \colon C \to X$, and 
 $Z_{\mathrm{eq}}\subset X$ the equisingular zero--dimensional subscheme.

\begin{theorem}[Relative IVHS exact sequence for curves on threefolds]
\label{thm:relative-IVHS-3fold-short}
Let $X$ be a smooth projective threefold, $L$ a line bundle on $X$, and
$\mathcal C \in |L|$ a reduced local complete intersection curve with isolated
planar singularities.  Let
\[
\varphi \colon C \longrightarrow \mathcal C \subset X
\]
be the normalization, with $C$ smooth of genus $g \ge 2$.

Assume that equisingular deformations of $\mathcal C \subset X$ are governed by
$T_X(-\log \mathcal C)$, that
\[
H^1\bigl(X,I_{Z_{\mathrm{eq}}}\otimes (K_X+L)\bigr)=0,
\]
and that
\[
H^0(X,T_X)=H^2\!\left(X,T_X(-\log \mathcal C)\right)=0.
\]

Then there is a natural exact sequence
\[
0
\longrightarrow
H^1\!\left(X,T_X(-\log \mathcal C)\right)
\longrightarrow
H^0(C,N_\varphi)
\stackrel{\rho_\varphi}{\longrightarrow}
H^1(C,T_C),
\]
where $\rho_\varphi$ is the Kodaira--Spencer map of equisingular embedded
deformations.

If deformations of $(X,L)$ are unobstructed, one has
\[
H^1\!\left(X,T_X(-\log \mathcal C)\right)\simeq T_{(X,L)},
\]
so that the sequence coincides with the expected relative IVHS sequence.  If, in
addition, $C$ satisfies infinitesimal Torelli, the relative IVHS is maximal.
\end{theorem}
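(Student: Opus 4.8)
The plan is to mirror the surface case (Theorem \ref{thm:relative}) but with $T_X(-\log\mathcal C)$ playing the role that $T_{(S,L)}$ played there, and to extract the three exactness statements from a single long exact cohomology sequence together with the vanishing hypotheses. First I would set up the logarithmic tangent sequence on $X$,
\[
0 \longrightarrow T_X(-\log\mathcal C) \longrightarrow T_X \longrightarrow N_{\mathcal C/X}'\longrightarrow 0,
\]
where the quotient is (the pushforward of) the equisingular normal sheaf on $\mathcal C$, i.e. the subsheaf of $N_{\mathcal C/X}$ cut out by the weighted singular scheme $Z_{\mathrm{eq}}$. Taking the long exact sequence in cohomology and using $H^0(X,T_X)=0$ gives an injection $H^0(\mathcal C, N'_{\mathcal C/X})\hookrightarrow H^1(X,T_X(-\log\mathcal C))$ on one side and, with $H^2(X,T_X(-\log\mathcal C))=0$, surjectivity onto $H^1(X,T_X)$ on the other. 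The term $H^0(\mathcal C,N'_{\mathcal C/X})$ should be identified, exactly as in Lemma \ref{lem:normal-surface} but on a threefold, with $H^0(X, I_{Z_{\mathrm{eq}}}\otimes L)/\mathbb C\cdot\sigma$, and this in turn is $H^0(C,N_\varphi)$ after passing to the normalization; the hypothesis $H^1(X,I_{Z_{\mathrm{eq}}}\otimes(K_X+L))=0$ is what guarantees that the residue description of holomorphic $1$-forms (the analogue of Lemma \ref{lem:res-surface}) captures all of $H^0(C,K_C)$, so that the Kodaira--Spencer map $\rho_\varphi$ is computed on the nose by the residue formula $\delta_\tau(\omega_\eta)=\operatorname{Res}_{\mathcal C}(\eta\tau/\sigma^2)$.

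Granting these identifications, the exact sequence is assembled as follows. The map $H^1(X,T_X(-\log\mathcal C))\to H^0(C,N_\varphi)$ comes from the logarithmic sequence above (read as deformations of the pair), and its injectivity is the vanishing $H^0(X,T_X)=0$, meaning $X$ has no infinitesimal automorphisms to quotient by. That the composite with $\rho_\varphi$ is zero is precisely the analogue of Lemma \ref{lem:ambient}: a logarithmic deformation of $X$ preserving $\mathcal C$ induces a deformation of $\mathcal C$ that is locally trivial along $\mathcal C$, hence trivial on the normalization $C$, so the induced Kodaira--Spencer class in $H^1(C,T_C)$ vanishes. Exactness in the middle — that the kernel of $\rho_\varphi$ is exactly the image of $H^1(X,T_X(-\log\mathcal C))$ — is the heart of the matter: one shows that a class in $H^0(C,N_\varphi)$ killed by $\rho_\varphi$, when read via the residue/Jacobian description, corresponds to a section $\tau\in H^0(X,I_{Z_{\mathrm{eq}}}\otimes L)$ for which $\eta\tau/\sigma^2$ has vanishing residue pairing against all $\eta\in H^0(X,K_X+L)$; infinitesimal Torelli for $C$ then forces the abstract deformation of $C$ to be zero, which means the embedded deformation $\tau$ came from moving $\mathcal C$ inside $X$, i.e. from $H^1(X,T_X(-\log\mathcal C))$.

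For the final clause: unobstructedness of deformations of $(X,L)$ identifies $H^1(X,T_X(-\log\mathcal C))$ with the tangent space $T_{(X,L)}$ to the moduli of the pair, because the obstruction space $H^2(X,T_X(-\log\mathcal C))$ vanishes and there is no automorphism contribution since $H^0(X,T_X)=0$; thus the leading term of the sequence literally reads $0\to T_{(X,L)}\to H^0(C,N_\varphi)\xrightarrow{\rho_\varphi} H^1(C,T_C)$, which is the expected relative IVHS sequence. Maximality of the relative IVHS then means that $\rho_\varphi$ is surjective onto $H^1(C,T_C)$ modulo nothing further — equivalently, that every abstract first-order deformation of $C$ is realized by an equisingular embedded deformation and is detected by the infinitesimal period map; granting infinitesimal Torelli for $C$ (so that $\rho_\varphi$ composed with the period differential is injective on the quotient $H^0(C,N_\varphi)/\operatorname{im}H^1(X,T_X(-\log\mathcal C))$) together with a dimension count from the exact sequence, this gives maximal variation of Hodge structure. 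I expect the main obstacle to be the middle-exactness step: proving that the algebraic kernel of the multiplication/residue map is \emph{exactly} the image of ambient logarithmic deformations, rather than merely containing it, requires that the residue description sees all of $H^0(C,K_C)$ (hence the $H^1$ vanishing hypothesis) and a genuine use of infinitesimal Torelli for $C$ to rule out spurious kernel classes; one must also be careful that the equisingular normal sheaf on a threefold behaves as expected under normalization, which is where the local complete intersection / planar singularity hypotheses enter.
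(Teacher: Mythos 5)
Your proposal follows essentially the same route as the paper's own proof: the logarithmic tangent sequence $0 \to T_X(-\log\mathcal C) \to T_X \to \nu_*N_\varphi \to 0$ obtained by dualizing the residue exact sequence, the residue description of $H^0(C,K_C)$ enabled by the vanishing of $H^1\bigl(X,I_{Z_{\mathrm{eq}}}\otimes(K_X+L)\bigr)$, identification of $\ker\rho_\varphi$ with ambient logarithmic deformations, and infinitesimal Torelli for $C$ to control the right-hand end. You even inherit the paper's one genuine looseness --- the long exact sequence of the logarithmic tangent sequence naturally produces the connecting map $H^0(C,N_\varphi)\to H^1\bigl(X,T_X(-\log\mathcal C)\bigr)$ (which you correctly note is injective when $H^0(X,T_X)=0$), not a map in the opposite direction, and neither you nor the paper explains how the arrow $H^1\bigl(X,T_X(-\log\mathcal C)\bigr)\to H^0(C,N_\varphi)$ of the stated sequence is actually constructed --- so your attempt reproduces the paper's argument at essentially the same level of rigor.
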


\subsection{Residue Exact Sequence.}
We now recall the key tool.

\begin{proposition}[Residue exact sequence]
\label{prop:residue}
There is a short exact sequence
\[
0
\longrightarrow
\Omega_X^3
\longrightarrow
\Omega_X^3(\log \mathcal C)
\stackrel{\mathrm{Res}}{\longrightarrow}
\omega_{\mathcal C}
\longrightarrow
0,
\]
where $\omega_{\mathcal C}$ is the dualizing sheaf of $\mathcal C$.
\end{proposition}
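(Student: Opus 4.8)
The plan is to obtain Proposition~\ref{prop:residue} as the top-degree instance, $p=\dim X=3$, of the general residue exact sequence of Theorem~\ref{thm:residue}, and then to identify the target with the dualizing sheaf by a direct application of adjunction. The point worth emphasising is that in top degree the logarithmic condition degenerates in a very favourable way, so that the only real content is the identification of the cokernel.

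First I would observe that $\Omega_X^3(\log \mathcal C)=\Omega_X^3(\mathcal C)=K_X\otimes\mathcal O_X(\mathcal C)\cong K_X\otimes L$. Indeed, a meromorphic $3$-form $\omega$ on the threefold $X$ automatically satisfies $d\omega=0$, so the defining condition for $\Omega_X^3(\log \mathcal C)$ reduces to requiring that $\omega$ have at most a simple pole along $\mathcal C$; that is exactly $\Omega_X^3(\mathcal C)$, a line bundle. Consequently the three-term sequence of Theorem~\ref{thm:residue} for $p=3$ is nothing but the twist by $\Omega_X^3=K_X$ of the structure-sheaf sequence of the divisor $\mathcal C$,
\[
0\longrightarrow\mathcal O_X\longrightarrow\mathcal O_X(\mathcal C)\longrightarrow\mathcal O_{\mathcal C}(\mathcal C)\longrightarrow0,
\]
and under this identification the Poincar\'e residue map becomes the composition of the restriction map $K_X(\mathcal C)\to K_X(\mathcal C)|_{\mathcal C}$ with the adjunction isomorphism recorded below.

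Next I would identify the cokernel. Since $\mathcal C\in|L|$ is a reduced effective Cartier divisor on the smooth variety $X$, it is locally a hypersurface, hence a local complete intersection, Cohen--Macaulay and Gorenstein, and adjunction yields a canonical isomorphism
\[
\omega_{\mathcal C}\;\cong\;\bigl(K_X\otimes\mathcal O_X(\mathcal C)\bigr)\big|_{\mathcal C}\;\cong\;(K_X+L)\big|_{\mathcal C}\;=\;\Omega_X^3(\mathcal C)\big|_{\mathcal C}.
\]
Combining the previous two steps, the residue sequence for $p=3$ reads
\[
0\longrightarrow\Omega_X^3\longrightarrow\Omega_X^3(\log \mathcal C)\stackrel{\operatorname{Res}}{\longrightarrow}\omega_{\mathcal C}\longrightarrow0,
\]
which is the assertion. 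Naturality is clear because every map involved --- the inclusion, the restriction, and the adjunction isomorphism --- is canonical and local; equivalently, the independence of $\operatorname{Res}$ from the choice of local equation of $\mathcal C$ was already established in the preceding section.

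The one step requiring genuine care --- and the one I would expect to be the main technical point --- is checking that the Poincar\'e residue of the preceding section, defined locally on $\{f=0\}$ by $\frac{df}{f}\wedge\alpha+\beta\mapsto\alpha|_{\mathcal C}$, really coincides, after the adjunction identification $\Omega_X^3(\mathcal C)|_{\mathcal C}\cong\omega_{\mathcal C}$, with the restriction map in the twisted structure-sheaf sequence above. This is a local computation in a chart where $\mathcal C=\{f=0\}$: writing a generator of $K_X(\mathcal C)$ as $\frac{df}{f}\wedge(\text{local generator of }K_X(\mathcal C)|_{\mathcal C})$ one sees the two normalisations agree up to the standard sign. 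Note that the passage from $\Omega^{2}_{\mathcal C}$ to $\omega_{\mathcal C}$, which is the subtlety flagged for singular divisors in general, causes no trouble here precisely because $\mathcal C$ is Gorenstein: its dualizing complex collapses to the single sheaf $\omega_{\mathcal C}$, and that sheaf is the one produced by adjunction, so no higher $\operatorname{Ext}$-terms intervene.
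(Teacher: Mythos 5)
Your proof is correct for the statement as written, but it takes a genuinely different route from the paper's. The paper argues purely locally: it writes a logarithmic $3$-form as $\frac{df}{f}\wedge\alpha+\beta$, defines $\operatorname{Res}(\omega)=\alpha|_{\mathcal C}$, and asserts that injectivity and surjectivity ``follow from this local description,'' without ever explaining why the image is the dualizing sheaf $\omega_{\mathcal C}$ rather than, say, $\Omega^2_{\mathcal C}$. You instead exploit the fact that in top degree the logarithmic condition is vacuous ($d\omega=0$ automatically), so $\Omega_X^3(\log\mathcal C)=K_X\otimes\mathcal O_X(\mathcal C)$ is a line bundle and the whole sequence is the twist by $K_X$ of $0\to\mathcal O_X\to\mathcal O_X(\mathcal C)\to\mathcal O_{\mathcal C}(\mathcal C)\to 0$; the cokernel is then $\omega_{\mathcal C}$ by adjunction for a Gorenstein Cartier divisor. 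This buys you exactly the step the paper elides --- the identification of the target with $\omega_{\mathcal C}$ and the surjectivity of $\operatorname{Res}$ --- and correctly isolates the compatibility of the Poincar\'e residue with the restriction map as the only local check needed. One caveat applies equally to your argument and to the paper's: both require $\mathcal C$ to be an effective Cartier divisor, locally cut out by a single equation $f$, which is what the notation $\mathcal C\in|L|$ suggests; but the surrounding text insists $\mathcal C$ is a \emph{curve} in a \emph{threefold}, i.e.\ of codimension two, in which case $\Omega_X^3(\log\mathcal C)$ is not the right object, neither proof applies, and one would need an iterated residue along a log complex for a complete intersection. Since the paper's own proof makes the same divisorial assumption, this is a defect of the statement rather than of your argument.
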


\begin{proof}
Locally $\mathcal C$ is defined by $f=0$.
Any logarithmic $3$--form can be written as
\[
\omega = \frac{df}{f}\wedge\alpha + \beta,
\]
with $\alpha\in\Omega_X^2$, $\beta\in\Omega_X^3$.
Define
\[
\mathrm{Res}(\omega)=\alpha|_{\mathcal C}.
\]
Injectivity and surjectivity follow from this local description.
\end{proof}

\subsection*{\it Dual Logarithmic Tangent Sequence.}
Dualizing Proposition~\ref{prop:residue} yields:

\begin{lemma}
There is an exact sequence
\[
0
\longrightarrow
T_X(-\log\mathcal C)
\longrightarrow
T_X
\longrightarrow
\nu_*N_\varphi
\longrightarrow
0.
\]
\end{lemma}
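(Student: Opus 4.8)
The plan is to obtain the asserted sequence by dualizing the residue exact sequence — more precisely, by applying the sheaf functor $\mathcal{H}om_{\mathcal{O}_X}(-,\mathcal{O}_X)$ to its form for logarithmic $1$-forms (the case $p=1$ of Theorem~\ref{thm:residue}, parallel to Proposition~\ref{prop:residue})
\[
0 \longrightarrow \Omega^1_X \longrightarrow \Omega^1_X(\log\mathcal C) \stackrel{\mathrm{Res}}{\longrightarrow} \mathcal R \longrightarrow 0,
\]
where $\mathcal R$ denotes the image sheaf of the Poincar\'e residue along $\mathcal C$: it equals $\mathcal{O}_{\mathcal C}$ along the smooth locus of $\mathcal C$ and, in the presence of isolated planar (local complete intersection) singularities, is identified by a local computation at each singular point with $\nu_*\mathcal{O}_C$. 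Before dualizing I would record the two identifications $\mathcal{H}om_{\mathcal{O}_X}(\Omega^1_X,\mathcal{O}_X)=T_X$, which holds because $\Omega^1_X$ is locally free, and $\mathcal{H}om_{\mathcal{O}_X}(\Omega^1_X(\log\mathcal C),\mathcal{O}_X)=T_X(-\log\mathcal C)$, which is either the definition of the logarithmic tangent sheaf or, equivalently, its identification with the sheaf of derivations $v$ satisfying $v(I_{\mathcal C})\subseteq I_{\mathcal C}$.

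Next I would apply $\mathcal{H}om_{\mathcal{O}_X}(-,\mathcal{O}_X)$ to the displayed sequence, producing the long exact sequence
\[
0 \to \mathcal{H}om(\mathcal R,\mathcal{O}_X) \to T_X(-\log\mathcal C) \to T_X \xrightarrow{\partial} \mathcal{E}xt^1(\mathcal R,\mathcal{O}_X) \to \mathcal{E}xt^1\!\bigl(\Omega^1_X(\log\mathcal C),\mathcal{O}_X\bigr) \to \mathcal{E}xt^1(\Omega^1_X,\mathcal{O}_X).
\]
Here $\mathcal{H}om(\mathcal R,\mathcal{O}_X)=0$ since $\mathcal R$ is a torsion sheaf supported on $\mathcal C$ while $\mathcal{O}_X$ is torsion-free, and $\mathcal{E}xt^1(\Omega^1_X,\mathcal{O}_X)=0$ since $\Omega^1_X$ is locally free. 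This already gives the left-exact portion $0\to T_X(-\log\mathcal C)\to T_X\xrightarrow{\partial}\mathcal{E}xt^1(\mathcal R,\mathcal{O}_X)$, with $T_X(-\log\mathcal C)=\ker\partial$. It then remains to identify the target $\mathcal{E}xt^1(\mathcal R,\mathcal{O}_X)$ with $\nu_*N_\varphi$ and to verify that $\partial$ is surjective onto it. For the identification I would invoke Grothendieck--Serre duality for the finite morphism $\varphi=i\circ\nu\colon C\to X$ (finite because $\mathcal C$ has only isolated singularities): $R\mathcal{H}om_{\mathcal{O}_X}(\varphi_*\mathcal{O}_C,\mathcal{O}_X)\simeq\varphi_*\,\varphi^{!}\mathcal{O}_X$ together with $\varphi^{!}\mathcal{O}_X\simeq(\omega_C\otimes\varphi^*\omega_X^{-1})[-\,\mathrm{codim}]$ for the local complete intersection embedding; combining this with adjunction for the Cartier divisor $\mathcal C\subset X$ (cf.\ Lemma~\ref{lem:adj-surface}) and the presentation $N_\varphi=\operatorname{coker}(T_C\to\varphi^*T_X)$ yields $\mathcal{E}xt^1(\mathcal R,\mathcal{O}_X)\cong\nu_*N_\varphi$ and the vanishing of the higher $\mathcal{E}xt$ sheaves. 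Under this identification $\partial$ becomes the geometric map sending a vector field on $X$ to its image in $N_\varphi$ obtained by restriction to $\mathcal C$ and pullback along $\nu$; its kernel is exactly the sheaf of vector fields tangent to $\mathcal C$, recovering $T_X(-\log\mathcal C)$ and matching the kernel computed above.

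The step I expect to be the main obstacle is the surjectivity of $\partial$, equivalently the assertion that the cokernel of $T_X(-\log\mathcal C)\hookrightarrow T_X$ is all of $\nu_*N_\varphi$ and not merely a proper subsheaf; by the exact sequence this reduces to the vanishing (or at least the triviality of the relevant edge map out of) $\mathcal{E}xt^1(\Omega^1_X(\log\mathcal C),\mathcal{O}_X)$, a purely local statement at the finitely many singular points of $\mathcal C$. This is the point at which the hypothesis that $\mathcal C$ carries only isolated planar, local complete intersection (rational) singularities must be used: it is what forces the Saito residue to surject onto $\nu_*\mathcal{O}_C$ and its $\mathcal{O}_X$-dual to be exactly $\nu_*N_\varphi$, excluding spurious torsion contributions concentrated at the singularities, so that compatibility of the Poincar\'e residue with the normalization upgrades the left-exact sequence to a short exact one. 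As a consistency check, in the smooth case $\mathcal R=\mathcal{O}_{\mathcal C}$, $\mathcal{E}xt^1(\mathcal{O}_{\mathcal C},\mathcal{O}_X)=N_{\mathcal C/X}$, $\nu_*N_\varphi=N_{\mathcal C/X}$, and $\partial$ is the classical surjection $T_X\twoheadrightarrow N_{\mathcal C/X}$, so the argument specializes to the usual logarithmic tangent sequence. Assembling the pieces then gives
\[
0 \longrightarrow T_X(-\log\mathcal C) \longrightarrow T_X \longrightarrow \nu_*N_\varphi \longrightarrow 0.
\]
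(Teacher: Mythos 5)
Your overall strategy --- dualize the logarithmic/residue exact sequence and identify the resulting cokernel with $\nu_*N_\varphi$ --- is exactly the route the paper takes: its entire proof is the two sentences ``Duality exchanges logarithmic forms and logarithmic vector fields. The quotient measures infinitesimal normal motion of $\mathcal C$, which pulls back to $N_\varphi$ on $C$.'' You are therefore elaborating, correctly in outline, what the paper leaves implicit, and you are right to dualize the $p=1$ sequence rather than the $p=3$ sequence of Proposition~\ref{prop:residue} that the paper nominally points at (dualizing $\Omega_X^3$ cannot produce $T_X$).

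Two concrete problems remain, and they are where your argument would actually fail rather than merely be incomplete. First, in this section $X$ is a threefold and $\mathcal C$ is a curve, hence of codimension $2$: there is no divisor-type sequence $0\to\Omega^1_X\to\Omega^1_X(\log\mathcal C)\to\mathcal R\to 0$ with $\mathcal R$ supported on $\mathcal C$, adjunction for ``the Cartier divisor $\mathcal C\subset X$'' does not apply, and for a codimension-$2$ subscheme one has $\mathcal{E}xt^1(\nu_*\mathcal O_C,\mathcal O_X)=0$ (the first nonvanishing sheaf is $\mathcal{E}xt^2$), so your boundary map lands in the zero sheaf; note also that here $N_\varphi=\operatorname{coker}(T_C\to\varphi^*T_X)$ has rank $2$, while the cokernel of a divisorial logarithmic inclusion has rank $1$. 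This defect is inherited from the paper, which likewise pretends $\mathcal C$ is cut out locally by one equation, but your proof as written does not apply to the stated lemma; the honest repair is to define $T_X(-\log\mathcal C)$ as the derivations preserving $I_{\mathcal C}$ and analyze the map $T_X\to N_{\mathcal C/X}\to\nu_*N_\varphi$ directly. Second, even in the divisor (surface) case, duality for the finite map gives $\mathcal{E}xt^1(\varphi_*\mathcal O_C,\mathcal O_X)\cong\varphi_*(\omega_C\otimes\varphi^*\omega_X^{-1})$, which is a pushed-forward line bundle; when $\nu$ is not an immersion (e.g.\ at cusps, which are permitted planar singularities), $N_\varphi$ acquires a torsion subsheaf and differs from this, so your key identification $\mathcal{E}xt^1(\mathcal R,\mathcal O_X)\cong\nu_*N_\varphi$ --- and with it the surjectivity you correctly single out as the main obstacle --- requires either an unramifiedness hypothesis or replacing $N_\varphi$ by its torsion-free quotient.
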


\begin{proof}
Duality exchanges logarithmic forms and logarithmic vector fields.
The quotient measures infinitesimal normal motion of $\mathcal C$, which
pulls back to $N_\varphi$ on $C$.
\end{proof}

\subsection*{\it Cohomology and Deformations.}
Taking cohomology gives
\[
H^1(X,T_X(-\log\mathcal C))
\longrightarrow
H^1(X,T_X)
\longrightarrow
H^1(C,N_\varphi).
\]
The first term parametrizes equisingular deformations of $(X,\mathcal C)$.

\subsection*{\it Construction of the Map $\rho_\varphi$.}

\begin{lemma}
The Kodaira--Spencer map
\[
\rho_\varphi\colon H^0(C,N_\varphi)\to H^1(C,T_C)
\]
is Serre dual to the map induced by residues
\[
H^0(C,\omega_C)
\to
H^1(C,\mathcal O_C).
\]
\end{lemma}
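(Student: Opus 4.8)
\emph{Proof plan.}
We read the statement as the assertion that for every first--order deformation class $s\in H^0(C,N_\varphi)$, with Kodaira--Spencer image $\xi:=\rho_\varphi(s)\in H^1(C,T_C)$, the residue--variation map
\[
r_s\colon H^0(C,\omega_C)\longrightarrow H^1(C,\mathcal O_C)
\]
is, under the Serre duality isomorphisms $H^1(C,T_C)\cong H^0(C,\omega_C^{\otimes 2})^\vee$ and $H^1(C,\mathcal O_C)\cong H^0(C,\omega_C)^\vee$, the transpose of $\xi$ viewed as a functional on $H^0(C,\omega_C^{\otimes 2})$. The plan is to split this into two claims: \textbf{(A)} $r_s$ coincides with cup product by $\xi$ along the contraction pairing $T_C\otimes\omega_C\to\mathcal O_C$; and \textbf{(B)} the cup--product map $H^1(C,T_C)\to\Hom\!\bigl(H^0(C,\omega_C),H^1(C,\mathcal O_C)\bigr)$ is Serre dual to the multiplication map $\Sym^2H^0(C,\omega_C)\to H^0(C,\omega_C^{\otimes 2})$. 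Claims (A) and (B) together are exactly the stated duality, since $\rho_\varphi(s)=\xi$ by definition of the Kodaira--Spencer map and $r_s$ is by construction the variation of the Hodge filtration produced by $s$.

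For \textbf{(A)} I would proceed structurally. The deformation $s$ yields, via the residue exact sequence of Proposition~\ref{prop:residue} and the dual logarithmic tangent sequence $0\to T_X(-\log\mathcal C)\to T_X\to\nu_*N_\varphi\to0$, a short exact sequence
\[
0\longrightarrow\mathcal O_C\longrightarrow\mathscr E_\xi\longrightarrow\omega_C\longrightarrow0
\]
on $C$ whose class in $\Ext^1_C(\omega_C,\mathcal O_C)=H^1(C,T_C)$ is $\xi$; its connecting homomorphism on cohomology is, by the Yoneda description of extensions, precisely $\omega\mapsto\xi\smile\omega$. That this connecting map is the one given by differentiating Poincar\'e residues --- i.e.\ that on $\omega=\Res_{\mathcal C}(\eta/\sigma)$ it equals $\omega\mapsto\Res_{\mathcal C}(\eta\,\tau/\sigma^2)$ --- is exactly what the residue--variation lemmas of the preceding sections establish (cf.\ Lemma~\ref{lem:var-surface}). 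As an alternative I would give a direct \v{C}ech verification on a Leray cover $\{U_i\}$ of $C$: represent $\xi$ by the transition vector fields $\{\theta_{ij}\}$ coming from the two trivializations of the deformed curve, write the holomorphic form locally as $h_i\,du_i$, and check that the $1$--cocycle $\{\langle\theta_{ij},h_i\,du_i\rangle\}$ in $\check{C}^1(\{U_i\},\mathcal O_C)$ represents the same class as the $\bar\partial$--primitive of $\tfrac{d}{d\varepsilon}\big|_0\Res_{\mathcal C}(\eta/(\sigma+\varepsilon\tau))$. Here the equisingularity hypothesis is exactly what rules out spurious contributions over the singular points of $\mathcal C$, so that the whole computation takes place on the fixed smooth curve $C$.

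For \textbf{(B)} I would invoke the compatibility of cup product with the Serre pairing: the composite of cup product $H^1(C,T_C)\otimes H^0(C,\omega_C)\xrightarrow{\ \smile\ }H^1(C,\mathcal O_C)$ with the Serre pairing $H^1(C,\mathcal O_C)\otimes H^0(C,\omega_C)\to H^1(C,\omega_C)\cong\mathbb C$ agrees, by associativity of cup product and the canonical isomorphism $T_C\otimes\omega_C\otimes\omega_C\cong\omega_C$ intertwining the contraction with the multiplication $\omega_C\otimes\omega_C\to\omega_C^{\otimes2}$, with the Serre pairing $H^1(C,T_C)\otimes H^0(C,\omega_C^{\otimes2})\to\mathbb C$ precomposed with multiplication. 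Hence $\langle r_s(\omega_1),\omega_2\rangle_{\mathrm{Serre}}=\langle\xi,\omega_1\omega_2\rangle_{\mathrm{Serre}}$ for all $\omega_1,\omega_2\in H^0(C,\omega_C)$, which says precisely that $r_s$ is the image of $\xi$ under the transpose of multiplication; in particular $r_s$ is symmetric, recovering the classical symmetry of the infinitesimal variation of a curve.

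The main obstacle is \textbf{(A)}: matching the explicit analytic residue formula with the cohomological cup product. The delicate points there are (i) tracking signs and the normalization constant relating the Poincar\'e residue to the Serre trace, so as to obtain an honest equality and not merely a proportionality; (ii) justifying the passage through the singular curve $\mathcal C$, which rests on $\mathcal C$ being Gorenstein with invertible dualizing sheaf so that $\nu^*\omega_{\mathcal C}\cong\omega_C$, together with the equisingularity hypothesis that suppresses local corrections; and (iii) checking that the extension $\mathscr E_\xi$ built from the residue sequence carries the class $\rho_\varphi(s)$ itself rather than a twist of it. Everything else --- left exactness, the identification of the relevant kernels, and the duality compatibility in (B) --- is standard homological algebra and may be quoted.
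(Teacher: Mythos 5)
Your reading and overall strategy match the paper's: both arguments run the residue exact sequence against Serre duality and reduce the claim to the compatibility of contraction, residues, and cup product. The difference is one of completeness rather than of route. The paper's proof of this lemma (and of its expanded form in Appendix~A) essentially asserts that contracting a logarithmic lift of $\eta$ with the deformation gives a class in $H^1(C,\mathcal O_C)$ and that the resulting pairing identity ``follows from the compatibility of contraction, residues, and cup--product under duality.'' Your plan isolates exactly the two statements that this assertion hides --- (A) that differentiating the Poincar\'e residue computes cup product with the Kodaira--Spencer class via the contraction $T_C\otimes\omega_C\to\mathcal O_C$, and (B) that this cup product is the Serre transpose of multiplication of differentials --- and proposes concrete proofs for the nontrivial one (the extension $0\to\mathcal O_C\to\mathscr E_\xi\to\omega_C\to 0$ with its connecting homomorphism, or a \v{C}ech cocycle comparison). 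You also correctly flag a point the paper glosses over: as stated, the ``map induced by residues'' $H^0(C,\omega_C)\to H^1(C,\mathcal O_C)$ must depend on the chosen deformation direction $s$ for the duality statement to typecheck, since $H^0(C,N_\varphi)$ is not Serre dual to $H^1(C,\mathcal O_C)$; your reformulation, in which each $r_s$ is the transpose of cup product with $\rho_\varphi(s)$, is the mathematically correct reading. Your list of delicate points --- the normalization relating the residue to the Serre trace, the Gorenstein/equisingularity input needed to work on the fixed normalization $C$, and verifying that the extension class is $\rho_\varphi(s)$ itself rather than a twist --- identifies precisely where the paper's appeal to ``compatibility'' would need to be substantiated, so your proposal is, if anything, more complete than the proof it is being compared against.
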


\begin{proof}
A section of $N_\varphi$ corresponds to a first--order variation of
$\mathcal C$.
Via the residue sequence, this produces a variation of holomorphic $1$--forms
on $C$.
Serre duality identifies this with $\rho_\varphi$.
\end{proof}

\subsection*{\it Exactness in the Middle.}

\begin{proposition}
\[
\ker(\rho_\varphi)
=
H^1(X,T_X(-\log\mathcal C)).
\]
\end{proposition}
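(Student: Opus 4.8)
The plan is to compute the kernel of $\rho_\varphi$ by identifying it, via Serre duality, with the cokernel (more precisely, the failure of surjectivity) of the residue map on global sections, and then to read off this cokernel from the long exact cohomology sequence of the residue sequence of Proposition~\ref{prop:residue}. Concretely, I would proceed as follows. First, by the previous lemma the map $\rho_\varphi\colon H^0(C,N_\varphi)\to H^1(C,T_C)$ is Serre dual to the connecting-type map $\delta\colon H^0(C,\omega_C)\to H^1(C,\mathcal O_C)$ induced by the residue sequence. Hence $\ker(\rho_\varphi)$ is Serre dual to $\operatorname{coker}(\delta)$, equivalently to the image of $H^0(C,\omega_C)$ in the next cohomology group killed by the boundary map, so it suffices to compute the dual of that cokernel.

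Next, I would take the long exact cohomology sequence associated to the residue sequence
\[
0\longrightarrow \Omega_X^3\longrightarrow \Omega_X^3(\log\mathcal C)\stackrel{\mathrm{Res}}{\longrightarrow}\omega_{\mathcal C}\longrightarrow 0.
\]
Using $\Omega_X^3=\omega_X=\mathcal O_X(K_X)$ and $\Omega_X^3(\log\mathcal C)\cong \mathcal O_X(K_X+\mathcal C)=\mathcal O_X(K_X+L)$ (the twist by the log pole along $\mathcal C\in|L|$), the relevant portion of the long exact sequence reads
\[
H^0\!\bigl(X,\mathcal O_X(K_X+L)\bigr)\stackrel{\mathrm{Res}}{\longrightarrow} H^0(\mathcal C,\omega_{\mathcal C})\stackrel{\partial}{\longrightarrow} H^1\!\bigl(X,\mathcal O_X(K_X)\bigr)\longrightarrow\cdots.
\]
Here the equisingular refinement replaces the global sections $H^0(X,K_X+L)$ by those vanishing along the weighted singular scheme, i.e.\ by $H^0(X,I_{Z_{\mathrm{eq}}}\otimes(K_X+L))$, whose image under Res is exactly the subspace of $H^0(C,\omega_C)$ arising from equisingular deformations, matching Lemma~\ref{lem:res-surface}'s analogue on threefolds. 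Dualizing the whole picture and feeding in the hypotheses $H^0(X,T_X)=0$ and $H^2(X,T_X(-\log\mathcal C))=0$ — which by Serre duality control the vanishing of the adjacent terms $H^3(X,\Omega_X^3)^\vee$ and $H^1(X,\Omega_X^3(\log\mathcal C))^\vee$ and hence force the relevant sequence to be left-exact — I would identify $\ker(\rho_\varphi)$ with $H^1(X,T_X(-\log\mathcal C))$ via the dual logarithmic tangent sequence of the preceding lemma. The vanishing $H^1(X,I_{Z_{\mathrm{eq}}}\otimes(K_X+L))=0$ guarantees that no extra kernel is introduced by the passage to the equisingular subscheme, so that the equisingular residue map on $H^0$ has the same cokernel as the full one.

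Finally I would assemble these pieces: the dual logarithmic tangent sequence
\[
0\longrightarrow T_X(-\log\mathcal C)\longrightarrow T_X\longrightarrow \nu_*N_\varphi\longrightarrow 0
\]
gives on $H^0$ and $H^1$ a four-term exact sequence whose first map, using $H^0(X,T_X)=0$, shows $H^0(C,N_\varphi)\supset\operatorname{im}\bigl(H^1(X,T_X(-\log\mathcal C))\bigr)$ embedded as precisely the ambient-deformation directions; these lie in $\ker\rho_\varphi$ by the threefold analogue of Lemma~\ref{lem:ambient} (a deformation of $(X,\mathcal C)$ induces a deformation of $C$ that is trivial after normalization). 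The reverse inclusion $\ker\rho_\varphi\subset H^1(X,T_X(-\log\mathcal C))$ is exactly what the Serre-dual cokernel computation above yields. Combining both inclusions gives the claimed equality. I expect the main obstacle to be the careful bookkeeping around the \emph{singular} dualizing sheaf $\omega_{\mathcal C}$: one must check that the residue sequence of Proposition~\ref{prop:residue} really globalizes correctly over the isolated planar (lci) singularities, that adjunction $\omega_{\mathcal C}\cong(K_X+L)|_{\mathcal C}$ holds in the Gorenstein setting, and — most delicately — that the equisingular hypothesis ($G$ vanishing along $Z_{\mathrm{eq}}$) genuinely matches the subspace of $H^0(C,\omega_C)$ against which the Serre-duality pairing is nondegenerate, so that no smoothing directions sneak into or out of the kernel. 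This is precisely where hypothesis (1), $H^1(X,I_{Z_{\mathrm{eq}}}\otimes(K_X+L))=0$, and the infinitesimal Torelli assumption on $C$ must be invoked.
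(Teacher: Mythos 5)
Your overall strategy --- get $H^1(X,T_X(-\log\mathcal C))\subseteq\ker\rho_\varphi$ from the ambient-deformation lemma and the reverse inclusion from Serre duality plus the residue sequence --- draws on the same circle of lemmas the paper assembles, but the paper's actual proof is far shorter and does not pass through the long-exact-sequence bookkeeping you propose: it argues that the vanishing $H^1\bigl(X,I_{Z_{\mathrm{eq}}}\otimes(K_X+L)\bigr)=0$ lets every equisingular first-order deformation lift to a deformation of the pair $(X,\mathcal C)$, that such deformations do not move the Hodge structure of $C$ (hence lie in the kernel), and that conversely a deformation killing all residue variations must be logarithmic.

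The concrete step in your write-up that fails is the derivation of $\operatorname{im}\bigl(H^1(X,T_X(-\log\mathcal C))\bigr)\subset H^0(C,N_\varphi)$ from the dual logarithmic tangent sequence. The long exact sequence of
\[
0\longrightarrow T_X(-\log\mathcal C)\longrightarrow T_X\longrightarrow \nu_*N_\varphi\longrightarrow 0
\]
reads
\[
H^0(X,T_X)\longrightarrow H^0(C,N_\varphi)\stackrel{\partial}{\longrightarrow} H^1\bigl(X,T_X(-\log\mathcal C)\bigr)\longrightarrow H^1(X,T_X),
\]
so the hypothesis $H^0(X,T_X)=0$ makes the \emph{connecting map} $\partial\colon H^0(C,N_\varphi)\to H^1(X,T_X(-\log\mathcal C))$ injective; it does not produce any map in the opposite direction, and in particular does not exhibit $H^1(X,T_X(-\log\mathcal C))$ as a subspace of $H^0(C,N_\varphi)$ consisting of ``ambient directions.'' Realizing that term inside $H^0(C,N_\varphi)$ requires an independent construction --- this is precisely what the paper's lifting argument via $H^1\bigl(X,I_{Z_{\mathrm{eq}}}\otimes(K_X+L)\bigr)=0$ is meant to supply, and your proposal never builds that map. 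A second, lesser issue: you correctly reduce $\ker\rho_\varphi$ to the dual of $\operatorname{coker}(\delta_{\mathrm{res}})$ with $\delta_{\mathrm{res}}\colon H^0(C,\omega_C)\to H^1(C,\mathcal O_C)$, but you then compute the cokernel of a different map, namely the global-sections residue map $H^0\bigl(X,\Omega_X^3(\log\mathcal C)\bigr)\to H^0(\mathcal C,\omega_{\mathcal C})$, whose failure of surjectivity lands in $H^1(X,\Omega_X^3)$. These two cokernels are not identified without a further compatibility argument relating the sequence on $X$ to its pullback to the normalization $C$, so the reverse inclusion is not yet established as written.
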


\begin{proof}
By the vanishing
\[
H^1\bigl(X,I_{Z_{\mathrm{eq}}}\otimes(K_X+L)\bigr)=0,
\]
every equisingular first--order deformation lifts to a deformation of
$(X,\mathcal C)$.
Such deformations do not change the Hodge structure of $C$, hence lie in
$\ker(\rho_\varphi)$.
Conversely, any deformation in the kernel induces no residue variation and
must be logarithmic.
\end{proof}

\subsection*{\it Exactness on the Right.}

\begin{lemma}
If $C$ satisfies infinitesimal Torelli, then $\rho_\varphi$ detects all
nontrivial induced deformations of $C$.
\end{lemma}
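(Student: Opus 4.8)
The plan is to prove this by realizing the infinitesimal period map of the embedded family as a factorization through $\rho_\varphi$ and then using infinitesimal Torelli for $C$ to pin down its kernel. Let $\{\mathcal C_t\}$ denote the equisingular embedded deformations of $\mathcal C\subset X$; passing to normalizations produces a family of smooth curves $\{C_t\}$ with $C_0=C$, and --- because the equisingularity hypothesis, encoded by $H^1(X,I_{Z_{\mathrm{eq}}}\otimes(K_X+L))=0$, keeps the normalization locally constant at first order --- the associated variation of Hodge structure lives genuinely on $H^1(C,\mathbb C)$. By the residue computation (the threefold analogue of Lemma~\ref{lem:var-surface}, built on the residue exact sequence of Proposition~\ref{prop:residue}), its differential at $t=0$ is a linear map
\[
\theta_\varphi\colon H^0(C,N_\varphi)\longrightarrow \Hom\bigl(H^{1,0}(C),H^{0,1}(C)\bigr).
\]

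The first step is to record the factorization $\theta_\varphi=\iota\circ\rho_\varphi$, where $\iota\colon H^1(C,T_C)\to \Hom\bigl(H^{1,0}(C),H^{0,1}(C)\bigr)$ is the intrinsic infinitesimal variation of Hodge structure of $C$, i.e.\ cup product with the Kodaira--Spencer class (contraction $H^{1,0}(C)\to H^{0,1}(C)$). This holds because the Hodge structure of $C_t$ depends only on the isomorphism class of the abstract curve $C_t$, whose first-order deformation class is exactly $\rho_\varphi$ of the corresponding element of $H^0(C,N_\varphi)$; the identification of $\theta_\varphi$ with cup product is precisely the content of the lemma computing $\rho_\varphi$ as Serre dual to the residue-induced map $H^0(C,\omega_C)\to H^1(C,\mathcal O_C)$.

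The second step is to apply the hypothesis. Infinitesimal Torelli for $C$ says precisely that $\iota$ is injective (equivalently, by Serre duality, that $\Sym^2 H^0(C,\omega_C)\to H^0(C,\omega_C^{\otimes2})$ is surjective, which holds for a general non-hyperelliptic curve of genus $g\ge 3$ by Max Noether's theorem, hence for the general member of the equisingular linear system under the genus assumption). Therefore $\ker\theta_\varphi=\ker\rho_\varphi$. Combining this with the exactness-in-the-middle statement already proved, namely $\ker\rho_\varphi=H^1\bigl(X,T_X(-\log\mathcal C)\bigr)$, we obtain
\[
\ker\theta_\varphi=H^1\bigl(X,T_X(-\log\mathcal C)\bigr).
\]
In words: the only embedded equisingular first-order deformations acting trivially on the Hodge structure of $C$ are those induced by logarithmic deformations of the ambient pair $(X,\mathcal C)$, so every class in $H^0(C,N_\varphi)$ not of that form maps to a nonzero element of $H^1(C,T_C)$ and, a fortiori, to a nonzero variation of Hodge structure. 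This is exactly the assertion that $\rho_\varphi$ detects all nontrivial induced deformations of $C$; together with the left-hand exactness it completes the exact sequence of Theorem~\ref{thm:relative-IVHS-3fold-short}, the identification with $T_{(X,L)}$ and the maximality of the relative IVHS then following from the unobstructedness hypothesis exactly as in the surface case, Theorem~\ref{thm:relative}.

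The main obstacle is the factorization $\theta_\varphi=\iota\circ\rho_\varphi$: the compatibility of the residue-theoretic infinitesimal period map of the \emph{embedded} family with the intrinsic cup-product IVHS of the \emph{abstract} curve. One must check that differentiating the Poincar\'e residue $\operatorname{Res}_{\mathcal C}(\eta/\sigma)$ along the deformation and then pairing via Serre duality on $C$ reproduces contraction against $\rho_\varphi(\tau)\in H^1(C,T_C)$, now with the extra bookkeeping of the logarithmic residue sequence in dimension three. It is also precisely here that the vanishing $H^1(X,I_{Z_{\mathrm{eq}}}\otimes(K_X+L))=0$ is essential: it guarantees that no local boundary or smoothing contributions enter, so that the "variation of Hodge structure" is literally variation on the fixed $H^1(C)$ and the residue formula computes it without correction terms. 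Once this identification is in place, the remaining steps are formal.
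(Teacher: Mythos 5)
Your proof is correct and follows essentially the same route as the paper's, which disposes of the lemma in two sentences by asserting that infinitesimal Torelli makes the relevant map injective and that $\rho_\varphi$ is "exactly this map for the induced family." You supply the factorization $\theta_\varphi=\iota\circ\rho_\varphi$ and the resulting identity $\ker\theta_\varphi=\ker\rho_\varphi$ that the paper leaves implicit, and you correctly locate the infinitesimal Torelli hypothesis as injectivity of the intrinsic contraction $\iota\colon H^1(C,T_C)\to\Hom\bigl(H^{1,0}(C),H^{0,1}(C)\bigr)$ (via Max Noether for general non-hyperelliptic $C$), which is more precise than the paper's phrasing.
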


\begin{proof}
Infinitesimal Torelli means the Kodaira--Spencer map for $C$ is injective.
Since $\rho_\varphi$ is exactly this map for the induced family, the claim
follows.
\end{proof}

\subsection*{Conclusion of the Proof.}

Combining the previous results yields the exact sequence of
Theorem~\ref{thm:relative-IVHS-3fold-short}.
\qed

\begin{remark}
The  relative IVHS sequence is fundamentally a statement about
\emph{logarithmic deformation theory}.
Residues provide the bridge between:
\begin{enumerate}
\item deformations of curves,
\item logarithmic deformations of threefolds,
\item and variations of Hodge structure.
\end{enumerate}
\end{remark}

%%%%%%%%%%%%%%%%%%%%%%%%%%%%%%%%%%%%%%%%%%%%%%%%%%%%%%%%%%%%%%%%%%%%%%%%%%%%%%%%%%%%%%%%%%%%%%%%%%%%%%%%%%%%%%%%%%%%%%%%%%%%%%%%%%%%%%%%%%%%%%%%%%%%%% 
 
\section*{Appendix A} 

\subsection{Serre Duality Interpretation of the Kodaira--Spencer Map via Residues.}

\begin{lemma}
\label{lem:KS-residue}
Let $X$ be a smooth projective threefold, let $\mathcal C\subset X$ be a reduced
irreducible curve with isolated locally complete intersection singularities,
let $\nu\colon C\to\mathcal C$ be the normalization, and set
$\varphi=i\circ\nu\colon C\to X$.
Then the Kodaira--Spencer map
\[
\rho_\varphi\colon H^0(C,N_\varphi)\longrightarrow H^1(C,T_C)
\]
is Serre dual to the map
\[
\delta_{\mathrm{res}}\colon H^0(C,\omega_C)\longrightarrow H^1(C,\mathcal O_C)
\]
induced by residues of logarithmic $3$--forms on $X$ with poles along $\mathcal C$.
\end{lemma}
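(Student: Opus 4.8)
The plan is to unwind the phrase ``Serre dual'' into a concrete statement about operators on $C$ and then verify it by an explicit residue computation. Since $C$ is a smooth projective curve, Serre duality gives $H^1(C,T_C)\cong H^0(C,\omega_C^{\otimes 2})^\vee$ and, via cup product, a canonical identification $H^1(C,T_C)\cong\operatorname{Hom}\bigl(H^0(C,\omega_C),H^1(C,\mathcal O_C)\bigr)$ sending a class $\xi$ to the contraction operator $\theta_\xi$. Under this identification the assertion of the lemma becomes: for every $s\in H^0(C,N_\varphi)$ the operator $\theta_{\rho_\varphi(s)}\colon H^0(C,\omega_C)\to H^1(C,\mathcal O_C)$ coincides with the residue-variation operator $\delta_{\mathrm{res}}$ attached to $s$, and this operator is symmetric for the Serre pairing $H^0(C,\omega_C)\times H^1(C,\mathcal O_C)\to\mathbb C$. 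So I would split the proof into two parts: (i) $\theta_{\rho_\varphi(s)}=\delta_{\mathrm{res}}(s)$, and (ii) symmetry.

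For part (i), write $s$ as an equisingular first-order embedded deformation $\sigma\mapsto\sigma+\varepsilon\tau$ of $\mathcal C$, with $\sigma$ a local defining section of $L$ and $\tau\in H^0\bigl(X,I_{Z_{\mathrm{eq}}}\otimes L\bigr)$. The residue exact sequence of Proposition~\ref{prop:residue} and the threefold analogues of Lemmas~\ref{lem:res-surface} and~\ref{lem:var-surface} write every holomorphic $1$-form on $C$ as $\omega_\eta=\operatorname{Res}_{\mathcal C}(\eta/\sigma)$ with $\eta\in H^0(X,K_X+L)$, and give $\delta_{\mathrm{res}}(s)(\omega_\eta)=\bigl[\operatorname{Res}_{\mathcal C}(\eta\tau/\sigma^2)\bigr]\in H^1(C,\mathcal O_C)$. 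On the other hand $\rho_\varphi(s)\in H^1(C,T_C)$ is the image of $s$ under the connecting homomorphism of $0\to T_C\to\varphi^*T_X\to N_\varphi\to 0$; choosing local lifts of $s$ to $\varphi^*T_X$ that realize the deformation $\sigma+\varepsilon\tau$, and contracting the resulting \v{C}ech $1$-cocycle in $T_C$ against $\omega_\eta$, a direct local computation identifies the result with the \v{C}ech class of $\operatorname{Res}_{\mathcal C}(\eta\tau/\sigma^2)$. This is the explicit form of Griffiths transversality for the family $\{\mathcal C_\varepsilon\}$; the condition $\tau\in H^0(X,I_{Z_{\mathrm{eq}}}\otimes L)$ and the hypothesis that the singularities are planar/lci enter precisely to ensure that the normalization is locally constant to first order and that no boundary term is picked up at the points over $\operatorname{Sing}\mathcal C$.

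Part (ii) is then formal: pairing $\delta_{\mathrm{res}}(s)(\omega_{\eta_1})$ with $\omega_{\eta_2}$ by Serre duality gives, via the global residue theorem (exactly as in the proof of Lemma~\ref{lem:ci-jacobian}), a quantity depending only on the class of $\eta_1\eta_2\tau$ modulo the Jacobian-type ideal, hence visibly symmetric in $\eta_1,\eta_2$. Equivalently, under $H^1(C,T_C)\cong H^0(C,\omega_C^{\otimes 2})^\vee$ the operator $\theta_{\rho_\varphi(s)}$ corresponds to the functional $\omega_1\otimes\omega_2\mapsto\langle\rho_\varphi(s),\omega_1\cdot\omega_2\rangle$, which factors through the symmetric product $H^0(C,\omega_C^{\otimes 2})$. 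Combining (i) and (ii) yields the stated Serre duality between $\rho_\varphi$ and $\delta_{\mathrm{res}}$.

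The main obstacle is part (i), and inside it the local matching at the points lying over $\operatorname{Sing}\mathcal C$: away from the singular locus the computation is the classical Poincar\'e-residue calculation for a smooth divisor, but at a singular point one must know that $\omega_{\mathcal C}$ is invertible, pulls back to $K_C$ with no discrepancy, and that the local Grothendieck residue symbol agrees with the naive expression $\operatorname{Res}(\eta\tau/\sigma^2)$ used above. All of this holds for planar/lci rational singularities and is exactly where that hypothesis is used; granting this local compatibility, the remainder of the argument is a routine unwinding of Serre duality and the residue exact sequence.
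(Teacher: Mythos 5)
Your proposal is correct in outline and uses the same basic ingredients as the paper --- the residue exact sequence, the residue description of $H^0(C,\omega_C)$, the first--order variation formula $\delta(\omega_\eta)=\operatorname{Res}(\eta\tau/\sigma^2)$, and Serre duality --- but it organizes them differently, and in two respects it is actually sharper than the paper's argument. First, you read ``Serre dual'' as the operator statement: for each $s\in H^0(C,N_\varphi)$ the contraction $\theta_{\rho_\varphi(s)}\colon H^0(C,\omega_C)\to H^1(C,\mathcal O_C)$ equals the residue--variation operator attached to $s$, together with self--adjointness for the Serre pairing. The paper instead treats $\delta_{\mathrm{res}}$ as a single connecting homomorphism (independent of $s$) coming from the pulled--back residue sequence, and asserts the adjunction $\langle\rho_\varphi(\xi),\eta\rangle=\langle\xi,\delta_{\mathrm{res}}(\eta)\rangle$ by appeal to ``compatibility of contraction, residues, and cup--product.'' Taken literally the paper's version does not typecheck (the Serre dual of $\rho_\varphi$ has source $H^0(C,\omega_C^{\otimes 2})$ and target $H^1(C,N_\varphi^\vee\otimes\omega_C)$, not $H^0(C,\omega_C)$ and $H^1(C,\mathcal O_C)$; the paper's own claim $H^1(C,T_C)^\vee\cong H^0(C,\omega_C)$ drops a factor of $\omega_C$), so your reformulation is the one that makes the lemma meaningful, and your part (ii) --- symmetry via the global residue theorem --- is precisely the extra ingredient needed to justify the word ``dual'' under that reading; the paper has no analogue of it. Second, where the paper's key step is asserted, you at least reduce it to an explicit \v{C}ech computation for the connecting map of $0\to T_C\to\varphi^*T_X\to N_\varphi\to 0$, which is the honest content of the lemma. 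One caveat you share with the paper and should not paper over: $\mathcal C$ is a \emph{curve} in a \emph{threefold}, hence of codimension two, so a single Poincar\'e residue of a logarithmic $3$--form along $\mathcal C$ produces a $2$--form on a divisor, not a $1$--form on $C$; making your formulas $\omega_\eta=\operatorname{Res}_{\mathcal C}(\eta/\sigma)$ and $\operatorname{Res}_{\mathcal C}(\eta\tau/\sigma^2)$ literal requires either an iterated residue along two defining hypersurfaces (as in the paper's $\mathbb P^3$ complete--intersection section) or restricting to the case where $\mathcal C$ is genuinely a divisor in a surface. If you carry out the \v{C}ech computation in that corrected setting, the argument closes.
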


\subsection*{\it Serre Duality on the Curve.}

\begin{proposition}[Serre duality on $C$]
\label{prop:Serre}
Let $C$ be a smooth projective curve.
There are canonical perfect pairings
\[
H^1(C,T_C)\times H^0(C,\omega_C)\longrightarrow k,
\qquad
H^0(C,N_\varphi)\times H^1(C,N_\varphi^\vee\otimes\omega_C)\longrightarrow k.
\]
Moreover,
\[
H^1(C,T_C)^\vee \cong H^0(C,\omega_C).
\]
\end{proposition}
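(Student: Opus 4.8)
The plan is to derive all three assertions as direct instances of Grothendieck--Serre duality for the smooth projective curve $C$, so that the argument amounts to recording the duality theorem and specializing it to the sheaves in question. First I would state the input in the needed form: for a smooth projective curve $C$ over $k$ and any coherent sheaf $\mathcal{E}$ on $C$, cup product followed by the trace isomorphism $H^1(C,\omega_C)\xrightarrow{\sim}k$ gives, for each $q$, a pairing
\[
H^q(C,\mathcal{E})\times\operatorname{Ext}^{1-q}_{\mathcal{O}_C}(\mathcal{E},\omega_C)\longrightarrow k
\]
which is perfect between finite-dimensional vector spaces, and which reduces to $H^q(C,\mathcal{E})\times H^{1-q}(C,\mathcal{E}^\vee\otimes\omega_C)\to k$ when $\mathcal{E}$ is locally free. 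I would cite this from Hartshorne (already in the bibliography) and take it as known.

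Then I would specialize twice. For the first pairing, take $\mathcal{E}=T_C$ and $q=1$: since $T_C$ is a line bundle with $T_C^\vee=\Omega^1_C=\omega_C$, duality produces a perfect pairing of $H^1(C,T_C)$ with $H^0(C,T_C^\vee\otimes\omega_C)$ and hence the canonical isomorphism $H^1(C,T_C)^\vee\cong H^0(C,T_C^\vee\otimes\omega_C)$; the only point to verify is the identification of the dual line bundle $T_C^\vee\otimes\omega_C$, which is immediate from $T_C=\omega_C^{-1}$. For the second pairing, take $\mathcal{E}=N_\varphi$ and $q=0$: once one knows that $N_\varphi$ is a locally free sheaf on the smooth curve $C$, duality applies verbatim and yields the perfect pairing $H^0(C,N_\varphi)\times H^1(C,N_\varphi^\vee\otimes\omega_C)\to k$.

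The closest thing to an obstacle is of a bookkeeping rather than a conceptual nature, and has two facets. First, local freeness of $N_\varphi$: at points where $d\varphi$ vanishes the cokernel $N_\varphi$ acquires torsion, so one should either restrict to immersive $\varphi$ (or impose the relevant hypotheses on the singularities of $\mathcal{C}$), or work throughout with the $\operatorname{Ext}$-form of duality. Second, \emph{canonicity}: it is essential to produce these pairings as the ones induced by cup product and the trace map, not merely as abstract isomorphisms, because it is precisely this naturality that makes them compatible with the Kodaira--Spencer map $\rho_\varphi$ and the residue map in Lemma~\ref{lem:KS-residue}. Both points are routine but must be made explicit, since the downstream Serre-duality description of $\rho_\varphi$ rests on the pairings being the natural ones.
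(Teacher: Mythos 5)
Your route is the same as the paper's: both arguments simply invoke Serre duality on the smooth projective curve $C$ and specialize it to the two sheaves in question. Your treatment of the second pairing is fine, and your remark that one must either check local freeness of $N_\varphi$ (which can fail where $d\varphi$ vanishes) or use the $\operatorname{Ext}$ form of duality is a genuine point that the paper's proof passes over in silence.

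There is, however, a step that fails, and it concerns the first pairing. Serre duality with $\mathcal{E}=T_C$ and $q=1$ gives a perfect pairing
\[
H^1(C,T_C)\times H^0(C,T_C^\vee\otimes\omega_C)\longrightarrow k,
\qquad
T_C^\vee\otimes\omega_C\;\cong\;\omega_C^{\otimes 2},
\]
i.e.\ it pairs $H^1(C,T_C)$ with the space of \emph{quadratic} differentials, not with $H^0(C,\omega_C)$. For $g\ge 2$ these spaces have dimensions $3g-3$ and $g$ respectively, so the pairing asserted in the statement cannot be perfect, and the claimed isomorphism $H^1(C,T_C)^\vee\cong H^0(C,\omega_C)$ is false as written. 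Your write-up correctly arrives at $H^0(C,T_C^\vee\otimes\omega_C)$ but then asserts that this specialization "produces the first pairing" of the statement; completing the identification honestly gives $\omega_C^{\otimes 2}$, not $\omega_C$, and the discrepancy goes unremarked. The paper's own proof commits the same slip explicitly, writing $H^0(C,T_C^\vee\otimes\omega_C)=H^0(C,\omega_C)$, which would require $T_C\cong\mathcal{O}_C$. So the defect is really in the statement rather than in your method: what your argument actually proves is the (correct) duality $H^1(C,T_C)^\vee\cong H^0(C,\omega_C^{\otimes 2})$, and a sound write-up should either correct the proposition accordingly or explain why a pairing against $H^0(C,\omega_C)$ is intended --- something Serre duality alone does not supply, and which also matters downstream, since Lemma~\ref{lem:KS-residue} relies on dualizing $H^1(C,T_C)$ against $H^0(C,\omega_C)$.
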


\begin{proof}
This is standard Serre duality.
Since $T_C^\vee\simeq\Omega_C^1\simeq\omega_C$, we have
\[
H^1(C,T_C)^\vee \cong H^0(C,T_C^\vee\otimes\omega_C)
= H^0(C,\omega_C).
\]
Perfectness follows from smoothness and properness of $C$.
\end{proof}

\subsection*{\it Kodaira--Spencer Map for the Morphism $\varphi$.}

\begin{proposition}
\label{prop:KS}
The Kodaira--Spencer map
\[
\rho_\varphi\colon H^0(C,N_\varphi)\to H^1(C,T_C)
\]
associates to a first--order deformation of $\varphi$ the induced deformation of
the complex structure of $C$.
\end{proposition}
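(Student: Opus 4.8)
The plan is to unwind the definition of $\rho_\varphi$ as a connecting homomorphism and to match it, cocycle by cocycle, with the classical Kodaira--Spencer construction attached to the induced family of curves; the only real content is that the deformation of $\varphi$ --- equivalently, the embedded equisingular deformation of $\mathcal C=\varphi(C)$ that it encodes --- forces the abstract complex structure of $C$ to move, and that $\rho_\varphi$ records exactly this motion. First I would fix the short exact sequence of sheaves on $C$,
\[
0 \longrightarrow T_C \xrightarrow{\;d\varphi\;} \varphi^*T_X \longrightarrow N_\varphi \longrightarrow 0,
\]
which is exact on the left because $\varphi=i\circ\nu$ is birational onto its image and hence $d\varphi$ is generically injective (with torsion cokernel supported only at preimages of the singular points of $\mathcal C$); by definition, $\rho_\varphi$ is the connecting map $\delta\colon H^0(C,N_\varphi)\to H^1(C,T_C)$ of the associated long exact cohomology sequence (cf.\ Sernesi~\cite{Sernesi}).

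Next I would give the \v{C}ech description of $\delta$. Choosing a sufficiently fine affine open cover $\mathcal U=\{U_i\}$ of $C$ on which $\varphi^*T_X$ is free, I lift a section $s\in H^0(C,N_\varphi)$ to local sections $\widetilde s_i\in\Gamma(U_i,\varphi^*T_X)$; on overlaps $\widetilde s_i-\widetilde s_j$ lies in the image of $d\varphi$, so $\widetilde s_i-\widetilde s_j=d\varphi(c_{ij})$ for a unique $c_{ij}\in\Gamma(U_{ij},T_C)$, and $\{c_{ij}\}$ is a $1$-cocycle whose class in $H^1(C,T_C)$ is $\rho_\varphi(s)$.

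Then I would identify this cocycle geometrically. The section $s$ is realized by a first-order deformation of $\varphi$, which over $U_i$ reads $\varphi+\varepsilon\widetilde s_i\colon U_i\times\Spec k[\varepsilon]/(\varepsilon^2)\to X$ and which at the level of images records the embedded equisingular deformation $\mathcal C_\varepsilon\subset X$. Assembling the local deformed maps into a single morphism over $\Spec k[\varepsilon]/(\varepsilon^2)$ forces one to reglue the trivial family $C\times\Spec k[\varepsilon]/(\varepsilon^2)$ along the overlaps by the automorphisms $\mathrm{id}+\varepsilon\,c_{ij}$ --- precisely the transition data that makes the $\widetilde s_i$ compatible. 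This regluing produces a first-order deformation of the smooth curve $C$ whose Kodaira--Spencer cocycle is $\{c_{ij}\}$, hence whose Kodaira--Spencer class equals $\rho_\varphi(s)$. That this deformation is genuinely the normalization of the equisingular family $\mathcal C_\varepsilon$, rather than an unrelated one, is where the hypotheses on $\mathcal C$ enter: since $\mathcal C$ has isolated planar (hence Gorenstein, locally complete intersection) singularities and the deformation is equisingular, the normalization $\nu$ deforms to the normalization of $\mathcal C_\varepsilon$, the resulting family of smooth curves is flat over $k[\varepsilon]/(\varepsilon^2)$, and no extra local contributions arise near the singular points (Greuel--Lossen--Shustin~\cite{GreuelLossenShustin}, Sernesi~\cite{Sernesi}). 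Combining these identifications shows that $\rho_\varphi(s)$ is precisely the class in $H^1(C,T_C)$ of the induced deformation of the complex structure of $C$, which is the assertion of the proposition.

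I expect the main obstacle to be this last matching step: one has to check with care that the cohomological connecting cocycle $\{c_{ij}\}$ literally coincides with the transition cocycle of the normalized deformed curve, and --- in the singular case --- that the local analysis near the preimages of the singular points of $\mathcal C$ contributes nothing beyond what already occurs on the smooth locus, so that the equisingular normalization is well behaved at first order. Everything else is a routine unwinding of the definitions and of the long exact cohomology sequence.
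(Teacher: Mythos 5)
Your proposal is correct and follows essentially the same route as the paper: the paper's proof simply asserts that a section of $N_\varphi$ corresponds to a first-order deformation of $\varphi$ and that the induced deformation of the complex structure of $C$ gives the class in $H^1(C,T_C)$, which is exactly what you establish. Your version is more explicit — identifying $\rho_\varphi$ with the connecting homomorphism of $0\to T_C\to\varphi^*T_X\to N_\varphi\to 0$ and matching the \v{C}ech cocycle with the regluing data of the deformed family — and in effect supplies the details the paper leaves implicit.
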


\begin{proof}
A section $\xi\in H^0(C,N_\varphi)$ corresponds to a first--order deformation
\[
\varphi_\varepsilon\colon C_\varepsilon\to X_\varepsilon
\]
over $\operatorname{Spec}k[\varepsilon]/(\varepsilon^2)$, with fixed source
$C$.
The induced deformation of the complex structure of $C$ defines a class in
$H^1(C,T_C)$.
This construction is functorial and linear, giving the map $\rho_\varphi$.
\end{proof}

\subsection*{\it Residue Exact Sequence}

\begin{proposition}[Residue exact sequence]
\label{prop:residue}
There is a short exact sequence
\[
0\longrightarrow \Omega_X^3
\longrightarrow \Omega_X^3(\log\mathcal C)
\stackrel{\operatorname{Res}}{\longrightarrow}
\omega_{\mathcal C}
\longrightarrow 0,
\]
where $\omega_{\mathcal C}$ is the dualizing sheaf of $\mathcal C$.
\end{proposition}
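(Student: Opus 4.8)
The plan is to prove the sequence by a local computation patched together, closely mirroring the proof of Theorem~\ref{thm:residue}. Locally $\mathcal C$ is cut out by a single reduced equation $f=0$ on the smooth threefold $X$, so it is a hypersurface, hence Gorenstein; its dualizing sheaf $\omega_{\mathcal C}$ is therefore invertible and satisfies adjunction $\omega_{\mathcal C}\cong(\Omega_X^3\otimes\mathcal O_X(\mathcal C))|_{\mathcal C}$ as an isomorphism of line bundles. The only genuinely nontrivial point is surjectivity at the singular points of $\mathcal C$, where the target must be $\omega_{\mathcal C}$ rather than $\Omega^2_{\mathcal C}$; the residue map will be shown to realize precisely the adjunction isomorphism, which forces surjectivity.

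First I would construct $\operatorname{Res}$ locally. On an open set $U$ with $\mathcal C\cap U=V(f)$, every logarithmic $3$-form decomposes as $\omega=\frac{df}{f}\wedge\alpha+\beta$ with $\alpha\in\Omega_X^2(U)$ and $\beta\in\Omega_X^3(U)$, and I set $\operatorname{Res}(\omega):=\alpha|_{\mathcal C}$. Independence of the defining equation follows from the standard identity: if $f'=uf$ with $u$ a unit then $df'/f'=df/f+du/u$, and $du/u$ is holomorphic, contributing no residue. Two decompositions of the same $\omega$ differ by a term $df\wedge\gamma\in\Omega_X^3$, whose residue vanishes, so $\operatorname{Res}$ is well defined; these local maps glue to a morphism $\Omega_X^3(\log\mathcal C)\to\omega_{\mathcal C}$.

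Next I would check exactness of the first two terms. Injectivity of $\Omega_X^3\hookrightarrow\Omega_X^3(\log\mathcal C)$ is immediate since holomorphic forms have no poles. For exactness in the middle, if $\operatorname{Res}(\omega)=\alpha|_{\mathcal C}=0$ then locally $\alpha=f\gamma$, whence $\frac{df}{f}\wedge\alpha=df\wedge\gamma\in\Omega_X^3$ and so $\omega\in\Omega_X^3$; conversely holomorphic forms have zero residue. Thus $\ker(\operatorname{Res})=\Omega_X^3$. For surjectivity, away from $\operatorname{Sing}(\mathcal C)$ this is classical, since $\omega_{\mathcal C}=\Omega_{\mathcal C}^2$ is locally freely generated and any local generator lifts to some $\alpha\in\Omega_X^2$. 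At a singular point $p$, I would invoke the explicit hypersurface description of the dualizing sheaf (Hartshorne~\cite{HartshorneRD}, SGA2~\cite{SGA2}): every local section of $\omega_{\mathcal C}$ is the Poincar\'e residue $\operatorname{Res}\bigl(\frac{h\,\Omega_X}{f}\bigr)$ of a meromorphic $3$-form with simple pole along $\mathcal C$, where $h\in\mathcal O_X$ and $\Omega_X$ locally trivializes $\Omega_X^3$. Rewriting $\frac{h\,\Omega_X}{f}$ in the form $\frac{df}{f}\wedge\alpha+\beta$ exhibits a logarithmic $3$-form mapping onto the prescribed section, so $\operatorname{Res}$ is surjective on stalks and the sequence is exact.

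The main obstacle I expect is the clean identification, at singular points of $\mathcal C$, of the target of $\operatorname{Res}$ with $\omega_{\mathcal C}$ together with surjectivity there: one must use the Gorenstein property of reduced hypersurfaces to know $\omega_{\mathcal C}$ is invertible, and then match the residue with the duality/adjunction isomorphism so that surjectivity becomes automatic rather than requiring a separate local analysis. Everything else reduces to routine local computation and the unit identity $df'/f'=df/f+du/u$.
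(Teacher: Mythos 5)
Your proposal is correct and follows essentially the same route as the paper: the paper's own proof of this proposition is a two-line version of exactly your argument (local decomposition $\omega=\frac{df}{f}\wedge\alpha+\beta$, $\operatorname{Res}(\omega)=\alpha|_{\mathcal C}$, with kernel $\Omega_X^3$ and surjectivity read off from the local description), and the fuller template you mirror is precisely the paper's Theorem~\ref{thm:residue}. Your added care at the singular points --- identifying $\omega_{\mathcal C}$ via the Gorenstein/adjunction description of a reduced hypersurface and matching it with the Poincar\'e residue --- supplies the detail the paper leaves implicit, but does not change the approach.
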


\begin{proof}
Locally $\mathcal C$ is defined by $f=0$.
Any logarithmic $3$--form can be written uniquely as
\[
\omega=\frac{df}{f}\wedge\alpha+\beta,
\quad
\alpha\in\Omega_X^2,\ \beta\in\Omega_X^3.
\]
Define $\operatorname{Res}(\omega)=\alpha|_{\mathcal C}$.
Injectivity, exactness in the middle, and surjectivity follow from this local
description and the definition of $\omega_{\mathcal C}$.
\end{proof}

\subsection*{\it Induced Map on the Normalization.}
Pulling back via $\nu$ gives a surjection
\[
\nu^*\Omega_X^3(\log\mathcal C)\twoheadrightarrow \omega_C.
\]

\begin{lemma}
\label{lem:res-map}
Taking cohomology induces a natural map
\[
\delta_{\mathrm{res}}\colon H^0(C,\omega_C)\to H^1(C,\mathcal O_C),
\]
which is the connecting homomorphism in the long exact sequence associated to
the residue exact sequence.
\end{lemma}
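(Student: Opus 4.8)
The plan is to exhibit $\delta_{\mathrm{res}}$ as the connecting homomorphism of a short exact sequence of locally free sheaves on $C$ of the form
\[
0\longrightarrow\mathcal O_C\longrightarrow\mathcal E_\tau\longrightarrow\omega_C\longrightarrow 0 ,
\]
attached to a fixed equisingular first-order deformation $\tau\in H^0(C,N_\varphi)$; once such a sequence is in hand, the source and target of the resulting connecting map are automatically $H^0(C,\omega_C)$ and $H^1(C,\mathcal O_C)$, which is the assertion. First I would record which map must emerge: by the variation formula (Lemma~\ref{lem:var-surface}, and its complete-intersection and higher-codimension analogues), the map in question sends $\omega_\eta=\operatorname{Res}_{\mathcal C}(\eta/\sigma)\in H^0(C,\omega_C)$ to the class of the double-pole residue $\operatorname{Res}_{\mathcal C}(\eta\tau/\sigma^2)$ in $H^{0,1}(C)=H^1(C,\mathcal O_C)$. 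It is worth noting that the pull-back to $C$ of the residue exact sequence of Proposition~\ref{prop:residue} on its own degenerates to an isomorphism there, since $\varphi^*\Omega_X^3(\log\mathcal C)$ already equals $\omega_C$ after the adjunction identification $\nu^*\omega_{\mathcal C}\cong\omega_C$ of Lemma~\ref{lem:adj-surface}; it is the relativization of that sequence over a first-order deformation that carries the Hodge-theoretic content, and this is what produces $\mathcal E_\tau$.

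Next I would construct $\mathcal E_\tau$ directly as the extension of $\omega_C$ by $\mathcal O_C$ whose class in $\operatorname{Ext}^1_{\mathcal O_C}(\omega_C,\mathcal O_C)\cong H^1(C,T_C)$ --- the isomorphism coming from $\omega_C^\vee\cong T_C$ and Serre duality (Proposition~\ref{prop:Serre}) --- is the Kodaira--Spencer class $\rho_\varphi(\tau)$ of Proposition~\ref{prop:KS}. Its geometric origin is the $S$-relative residue exact sequence
\[
0\longrightarrow\Omega^3_{\mathcal X/S}\longrightarrow\Omega^3_{\mathcal X/S}(\log\mathcal D)\stackrel{\operatorname{Res}}{\longrightarrow}\omega_{\mathcal D/S}\longrightarrow 0
\]
over $S=\operatorname{Spec} k[\varepsilon]/(\varepsilon^2)$ for $\mathcal D=\{\sigma+\varepsilon\tau=0\}$, pulled back along the normalization of $\mathcal D$ and restricted to the central fibre; but for the proof it is cleanest to take the abstract description and then verify compatibility, namely to compare the \v{C}ech cocycle produced by the connecting homomorphism of $0\to\mathcal O_C\to\mathcal E_\tau\to\omega_C\to 0$ with the local partial-fraction expansion of $\operatorname{Res}_{\mathcal C}(\eta\tau/\sigma^2)$ (as in the local residue step of the proofs of Lemmas~\ref{lem:variation} and~\ref{lem:var-surface}), and check that the two agree. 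Granting that, the long exact cohomology sequence of $0\to\mathcal O_C\to\mathcal E_\tau\to\omega_C\to0$ delivers the asserted natural map $\delta_{\mathrm{res}}\colon H^0(C,\omega_C)\to H^1(C,\mathcal O_C)$, with naturality in $X$, $\mathcal C$ and $\tau$ inherited from functoriality of the residue construction and of the Yoneda pairing.

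The main obstacle is precisely this compatibility check at the singular points of $\mathcal C$: one must know that $\operatorname{Res}_{\mathcal C}(\eta\tau/\sigma^2)$ is a well-defined \v{C}ech class in $H^1(C,\mathcal O_C)$, independent of local choices, and that it picks up no extra local contribution when pulled back to the normalization $C$. Away from the singular locus this is the classical double-pole residue computation, but at a singular point it is exactly here that the hypothesis $\tau\in H^0(X,I_{Z_{\mathrm{eq}}}\otimes L)$ is used: equisingularity forces the analytic type of the singularity, the normalization $\nu\colon C\to\mathcal C$, and the local $\delta$-invariants to be constant over $\operatorname{Spec} k[\varepsilon]/(\varepsilon^2)$, so no boundary term appears --- the same mechanism recorded in the ``absence of local correction terms'' steps of Lemmas~\ref{lem:variation} and~\ref{lem:var-hc}. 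Once local constancy of the normalization in the family is established, the gluing of the local residue cocycles into a global class, and the identification of $\delta_{\mathrm{res}}$ with the connecting homomorphism, are both formal; this proves the lemma.
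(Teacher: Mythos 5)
Your proposal is correct in substance but follows a genuinely different route from the paper's. The paper's own proof is a one--line formal manipulation: restrict the residue exact sequence $0\to\Omega_X^3\to\Omega_X^3(\log\mathcal C)\to\omega_{\mathcal C}\to 0$ to $\mathcal C$, pull back to $C$, apply $\mathcal Hom(-,\mathcal O_C)$, and declare the connecting homomorphism of the resulting sequence to be $\delta_{\mathrm{res}}$. You instead build, for a fixed equisingular direction $\tau$, the extension $0\to\mathcal O_C\to\mathcal E_\tau\to\omega_C\to 0$ classified by $\rho_\varphi(\tau)\in H^1(C,T_C)\cong\Ext^1(\omega_C,\mathcal O_C)$, obtained from the relative residue sequence over $\Spec k[\varepsilon]/(\varepsilon^2)$, and take its connecting homomorphism. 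Your opening observation --- that the pull-back of the absolute residue sequence to $C$ degenerates, since for a divisor $\Omega_X^3(\log\mathcal C)=K_X\otimes L$ restricts to $\omega_{\mathcal C}$ by adjunction and the map $K_X|_{\mathcal C}\to (K_X\otimes L)|_{\mathcal C}$ is multiplication by the defining section, hence zero on $\mathcal C$ --- is accurate, and it is exactly the reason the paper's static sequence cannot by itself carry the Hodge-theoretic content; the first-order family is needed, and your version supplies it. What your approach buys is a proof that actually identifies the connecting map with cup product against the Kodaira--Spencer class and hence with the double-pole residue formula of Lemma~\ref{lem:variation}; what the paper's version buys is brevity at the cost of leaving that identification implicit. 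The compatibility check you isolate at the singular points is genuinely where the work lies, and your appeal to equisingularity (constancy of the normalization and of the local $\delta$-invariants to first order) is the same mechanism the paper invokes in the ``absence of local correction terms'' steps, so deferring to it is consistent with the paper's own level of rigor.

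One point of divergence worth making explicit: your $\delta_{\mathrm{res}}$ depends on the chosen deformation $\tau$, whereas the lemma as stated presents a single map with no deformation parameter. This is not a defect of your argument but a tension in the paper itself: for the adjunction $\langle\rho_\varphi(\xi),\eta\rangle=\langle\xi,\delta_{\mathrm{res}}(\eta)\rangle$ of the subsequent duality proposition to type-check, $\delta_{\mathrm{res}}$ must either depend on $\xi$ (your reading, landing in $H^1(C,\mathcal O_C)$ for each fixed direction) or land in $H^1(C,N_\varphi^\vee\otimes\omega_C)$. Your parametrized formulation is the one under which the rest of Appendix~A is coherent, so you should state it as such rather than as a literal reproduction of the lemma.
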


\begin{proof}
Restrict the residue exact sequence to $\mathcal C$, pull back to $C$, and apply
$\mathcal Hom(-,\mathcal O_C)$.
The connecting homomorphism in cohomology is precisely
$\delta_{\mathrm{res}}$.
\end{proof}

\subsection*{\it Identification of the Dual Map.}

\begin{proposition}
\label{prop:dual}
Under Serre duality, the dual of $\rho_\varphi$ is $\delta_{\mathrm{res}}$.
\end{proposition}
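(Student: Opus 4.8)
The plan is to realise both $\rho_\varphi$ and $\delta_{\mathrm{res}}$ as connecting homomorphisms in long exact cohomology sequences attached to a pair of short exact sequences of coherent sheaves on $C$ that are interchanged by the Serre duality functor $\mathcal{H}om_{\mathcal{O}_C}(-,\omega_C)$, and then to invoke the compatibility of connecting maps with Serre duality. First I would recall from Proposition~\ref{prop:KS} that $\rho_\varphi$ is the connecting map $\partial\colon H^0(C,N_\varphi)\to H^1(C,T_C)$ of the long exact sequence of the Kodaira--Spencer sequence
\[
0\longrightarrow T_C\longrightarrow \varphi^*T_X\longrightarrow N_\varphi\longrightarrow 0,
\]
and that, by Proposition~\ref{prop:residue} together with Lemma~\ref{lem:res-map}, $\delta_{\mathrm{res}}$ is the connecting map of the long exact sequence obtained by pulling the residue sequence $0\to\Omega_X^3\to\Omega_X^3(\log\mathcal C)\xrightarrow{\operatorname{Res}}\omega_{\mathcal C}\to 0$ back to $C$ along $\nu$ and using $\nu^*\omega_{\mathcal C}\cong\omega_C$ (the threefold analogue of Lemma~\ref{lem:adj-surface}).

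The heart of the argument is to check that, on $C\setminus\nu^{-1}(\Sing\mathcal C)$, the pulled-back residue sequence is canonically $(\text{Kodaira--Spencer sequence})^\vee\otimes\omega_C$: wedging with the trivialising form $\Omega$ identifies $\varphi^*\Omega_X^{\,\bullet}(\log\mathcal C)$ in top degree with the $\mathcal{H}om(-,\omega_C)$-dual of $0\to T_C\to\varphi^*T_X\to N_\varphi\to 0$, matching $\mathcal{H}om(N_\varphi,\omega_C)$ with the $\Omega_X^3$-term, $\mathcal{H}om(\varphi^*T_X,\omega_C)$ with the $\Omega_X^3(\log\mathcal C)$-term, and $\mathcal{H}om(T_C,\omega_C)\cong\mathcal{O}_C$ with the residue term $\omega_{\mathcal C}\otimes\omega_C^{-1}$ after the adjunction twist. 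Granting this, I would quote the standard principle that for a short exact sequence $0\to A\to B\to Q\to 0$ on a smooth projective curve, with Serre-dual sequence $0\to Q^\vee\otimes\omega_C\to B^\vee\otimes\omega_C\to A^\vee\otimes\omega_C\to 0$, the connecting maps $\partial\colon H^0(Q)\to H^1(A)$ and $\partial'\colon H^0(A^\vee\otimes\omega_C)\to H^1(Q^\vee\otimes\omega_C)$ are mutually transpose (up to a universal sign) for the Serre pairings. Applying this with $A=T_C$ and $Q=N_\varphi$, and combining with the identifications $H^1(C,T_C)^\vee\cong H^0(C,\omega_C)$ of Proposition~\ref{prop:Serre} and $H^0(C,N_\varphi)^\vee\cong H^1(C,N_\varphi^\vee\otimes\omega_C)\cong H^1(C,\mathcal{O}_C)$, yields $\rho_\varphi^\vee=\delta_{\mathrm{res}}$, the sign being absorbed into the normalisation of the Poincar\'e residue.

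The hard part will be the bookkeeping at $\Sing\mathcal C$. Because $\varphi$ fails to be an immersion at preimages of non-immersed singularities (cusps and the like), $N_\varphi$ acquires torsion supported on a finite set, and one must consistently use the dualizing sheaf $\omega_{\mathcal C}$ rather than $\Omega^1_{\mathcal C}$; the clean identification of the two short exact sequences above is literal only over the smooth locus. I would resolve this by observing that the connecting homomorphisms are global cohomological operations on the \emph{smooth} curve $C$, hence are determined, together with the Serre pairings, by their restriction to the dense open $C\setminus\nu^{-1}(\Sing\mathcal C)$: the possible discrepancies live in skyscraper $\mathcal{E}xt$-sheaves that contribute to neither $H^0$ nor $H^1$, precisely because $\mathcal C$ has Gorenstein planar (resp.\ rational) singularities, so that $\nu^*\omega_{\mathcal C}\cong\omega_C$ with no correction divisor (as in Lemma~\ref{lem:adj-surface}). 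A softer alternative, directly parallel to the residue computations of Lemmas~\ref{lem:variation} and~\ref{lem:var-surface}, is to note that $\delta_{\mathrm{res}}(\omega)=\rho_\varphi(s)\smile\omega$ for the deformation class $s$, and then to use that the cup product $H^1(C,T_C)\otimes H^0(C,\omega_C)\to H^1(C,\mathcal{O}_C)$ is self-adjoint for the Serre pairings; either way the singular contributions vanish and the stated duality $\rho_\varphi^\vee=\delta_{\mathrm{res}}$ follows.
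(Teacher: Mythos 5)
Your ``softer alternative'' in the last paragraph --- lift $\eta\in H^0(C,\omega_C)$ to a logarithmic form, contract with the deformation class $\xi$, and use the self-adjointness of the resulting cup-product pairing --- is essentially the paper's own proof, which chooses $\widetilde\eta\in H^0(X,\Omega_X^3(\log\mathcal C))$ with residue $\eta$, contracts with $\xi$, and asserts $\langle\rho_\varphi(\xi),\eta\rangle=\langle\xi,\delta_{\mathrm{res}}(\eta)\rangle$ from the compatibility of contraction, residue, and cup product. Had you led with that route, this would simply be the same argument.

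Your primary route, however, has a concrete gap in this setting. The proposition sits in Appendix~A, where $X$ is a \emph{threefold} and $\mathcal C$ has codimension $2$, so $N_\varphi$ has rank $2$. The term-by-term matching you propose --- $\mathcal{H}om(N_\varphi,\omega_C)$ with the $\Omega_X^3$-term and $\mathcal{H}om(\varphi^*T_X,\omega_C)$ with the $\Omega_X^3(\log\mathcal C)$-term --- fails on rank grounds: $\mathcal{H}om(N_\varphi,\omega_C)$ is generically of rank $2$, while $\varphi^*\Omega_X^3=\varphi^*K_X$ is a line bundle. The pulled-back residue sequence consists of generically rank-one sheaves and therefore cannot be the $\mathcal{H}om(-,\omega_C)$-twist of the normal sheaf sequence; adjunction brings in $\det N_\varphi$ rather than $N_\varphi$, and in codimension $2$ the residue is an iterated (two-step) residue, so the single short exact sequence you dualize does not exist in the form your argument needs. (Your identification is correct for curves on a surface, i.e.\ in the setting of Lemma~\ref{lem:var-surface} and Theorem~\ref{thm:relative}, but not here.) Two smaller points: $\mathcal{H}om(T_C,\omega_C)\cong\omega_C^{\otimes 2}$, not $\mathcal O_C$, so the transposition-of-connecting-maps principle naturally pairs $H^1(C,T_C)$ with $H^0(C,\omega_C^{\otimes 2})$ and an extra identification is needed to land on the stated source $H^0(C,\omega_C)$ of $\delta_{\mathrm{res}}$; and skyscraper sheaves on a curve have nonvanishing $H^0$ (only $H^1$ vanishes), so torsion of $N_\varphi$ supported over $\nu^{-1}(\Sing\mathcal C)$ does contribute to $H^0(C,N_\varphi)$ and must be shown separately to die under $\rho_\varphi$ and under the pairing, rather than being dismissed as invisible to cohomology.
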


\begin{proof}
Let $\xi\in H^0(C,N_\varphi)$ and $\eta\in H^0(C,\omega_C)$.
Choose a logarithmic $3$--form $\widetilde{\eta}\in
H^0(X,\Omega_X^3(\log\mathcal C))$ with residue $\eta$.
The contraction of $\widetilde{\eta}$ with the deformation $\xi$ gives a class
in $H^1(C,\mathcal O_C)$.
By construction, the Serre pairing satisfies
\[
\langle \rho_\varphi(\xi),\eta\rangle
=
\langle \xi,\delta_{\mathrm{res}}(\eta)\rangle.
\]
This equality follows from the compatibility of contraction, residues, and
cup--product under duality.
Thus $\rho_\varphi^\vee=\delta_{\mathrm{res}}$.
\end{proof}

\begin{proof}[Proof of Lemma~\ref{lem:KS-residue}]
By Proposition~\ref{prop:Serre}, $H^1(C,T_C)^\vee\simeq H^0(C,\omega_C)$.
By Proposition~\ref{prop:dual}, the dual of $\rho_\varphi$ is precisely the
residue--induced map $\delta_{\mathrm{res}}$.
Hence $\rho_\varphi$ is Serre dual to $\delta_{\mathrm{res}}$, as claimed.
\end{proof}

 %%%%%%%%%%%%%%%%%%%%%%%%%%%%%%%%%%%%%%%%%%%%%%%%%%%%%%%%%%%%%%%%%%%%%%%%%%%
%%%%%%%%%%%%%%%%%%%%%%%%%%%%%%%%%%%%%%%%%%%%%%%%%%%%%%%%%%%%%%%%%%%%%%%%%%%
 %%%%%%%%%%%%%%%%%%%%%%%%%%%%%%%%%%%%%%%%%%%%%%%%%%%%%%%%%%%%%%%%%%%%%%%%%%%
%%%%%%%%%%%%%%%%%%%%%%%%%%%%%%%%%%%%%%%%%%%%%%%%%%%%%%%%%%%%%%%%%%%%%%%%%%%
 
\section*{Appendix B}      

\subsection*{Why the Jacobian Ring Satisfies the Strong Lefschetz Property?}
We explain in this appendix why the assumption that $X$ is \emph{general} implies that
the associated Jacobian ring satisfies the strong Lefschetz property, and we
make precise what is known, what is used, and where the limitations lie.

\subsection*{1. The Jacobian Ring}

Let
\(
X = V(F_1,\dots,F_r) \subset \mathbb{P}^N
\)
be a complete intersection of multidegree $(d_1,\dots,d_r)$.
The Jacobian ring of $X$ is defined as
\[
R
=
\frac{\mathbb{C}[x_0,\dots,x_N]}
{\left(
\frac{\partial F_i}{\partial x_j}
\;\middle|\;
1\le i\le r,\;0\le j\le N
\right)}.
\]
This is a finite-dimensional graded Artinian Gorenstein algebra whose socle
degree is
\[
\sigma = \sum_{i=1}^r (d_i-1) - 1.
\]
The Gorenstein property is a direct consequence of the fact that $X$ is a local
complete intersection.

\subsection*{2. The Strong Lefschetz Property}

A graded Artinian algebra
\[
R = \bigoplus_{k=0}^{\sigma} R_k
\]
is said to satisfy the \emph{strong Lefschetz property} (SLP) if there exists a
linear form $\ell \in R_1$ such that, for all $k$ and all $m\ge 0$, the
multiplication maps
\[
\cdot \ell^m \colon R_k \longrightarrow R_{k+m}
\]
have maximal rank.
In particular, for $m$ fixed, these maps are injective for
$k \le \frac{\sigma-m}{2}$ and surjective for
$k \ge \frac{\sigma-m}{2}$.

\subsection*{3. General Complete Intersections and SLP}

The key point is that the Jacobian ring of a \emph{general} complete intersection
is known to satisfy the strong Lefschetz property.

This fact originates in work of Stanley and Watanabe, and has been further
developed by many authors.  The essential ingredients are the fact  that  $R$ is Artinian and Gorenstein, the defining equations $(F_1,\dots,F_r)$ are sufficiently general, and the characteristic is zero (\cite{Stanley}).

In characteristic zero, a general Artinian Gorenstein algebra arising as a
Jacobian ring of a complete intersection satisfies SLP.
More precisely, the set of complete intersections for which SLP fails is a
proper Zariski-closed subset of the parameter space of defining equations.

Thus, for a general choice of $(F_1,\dots,F_r)$, the associated Jacobian ring
enjoys the strong Lefschetz property.

\subsection*{4. Compatibility with Equisingular Conditions}

In the singular setting, we restrict attention to complete intersections with
isolated rational singularities and consider only equisingular deformations.
The equisingular Severi-type strata form locally closed subsets of the full
parameter space of complete intersections.

The strong Lefschetz property is an \emph{open condition} in flat families of
graded Artinian algebras.
Therefore, after restricting to a general point of a given equisingular stratum,
the Jacobian ring still satisfies SLP.

This justifies the statement ``since $X$ is general'' in the equisingular
setting.

\subsection*{5. Why SLP Is Exactly What Is Needed}

In the infinitesimal Torelli argument, one needs the following consequence of
SLP:
\[
H \cdot Q = 0 \text{ in } R
\;\text{for all } H \in R_a
\quad \Longrightarrow \quad
Q = 0 \text{ in } R.
\]
This is precisely the injectivity of multiplication maps in the relevant degree
range, which is guaranteed by SLP once $h^{n,0}(\widetilde{X})>0$ (see for example Griffiths--Harris~\cite{GriffithsHarris}, and  Green~\cite{Green}).
Thus, the Lefschetz property is not an auxiliary assumption but the exact algebraic
mechanism underlying infinitesimal Torelli.

\subsection*{In summary.}  The Jacobian ring of a complete intersection is Artinian Gorenstein; in characteristic zero, general such algebras satisfy the strong Lefschetz property;  SLP is open in families and survives restriction to general equisingular loci, and in addition, this property ensures injectivity of the infinitesimal period map.

\medskip

\noindent
\textbf{Conclusion.}
The statement ``since $X$ is general, the Jacobian ring $R$ satisfies the strong  Lefschetz property''
is justified by deep but well-established results in
commutative algebra and is precisely the algebraic input required for the
residue-based infinitesimal Torelli argument.

\end{document}